\documentclass[11pt,reqno]{amsart}

\usepackage[T1]{fontenc}
\usepackage[utf8]{inputenc}
\usepackage{lmodern}
\usepackage{microtype}
\usepackage[margin=1.08in]{geometry}
\usepackage{amsmath,amssymb,mathtools}
\usepackage{booktabs}
\usepackage{array}
\usepackage{needspace}
\usepackage{xcolor}
\usepackage{graphicx}
\usepackage{tikz}
\usetikzlibrary{decorations.pathreplacing,calc,positioning}
\newlength{\permcell}\newlength{\permdot} 
\usepackage{aliascnt}
\usepackage{hyperref}
\usepackage{xurl}

\usepackage[capitalise,nameinlink,noabbrev]{cleveref}

\definecolor{pairblue}{RGB}{37,99,154}
\definecolor{tailorange}{RGB}{205,105,31}
\definecolor{darkgreen}{RGB}{39,108,72}

\hypersetup{
  colorlinks=true,
  linkcolor=pairblue,
  citecolor=darkgreen,
  urlcolor=tailorange,
  pdftitle={Protected tails and polynomial-time enumeration of permutations
    avoiding a direct sum of an increasing pattern and 231},
  pdfauthor={Henning Arn\'{o}r Skeggi \'{U}lfarsson},
  pdflang={en},
  pdfkeywords={permutation pattern, pattern avoidance, polynomial-time
    algorithm, Wilfian formula}
}

\newtheorem{theorem}{Theorem}[section]
\newaliascnt{proposition}{theorem}
\newtheorem{proposition}[proposition]{Proposition}
\aliascntresetthe{proposition}
\newaliascnt{lemma}{theorem}
\newtheorem{lemma}[lemma]{Lemma}
\aliascntresetthe{lemma}
\newaliascnt{corollary}{theorem}
\newtheorem{corollary}[corollary]{Corollary}
\aliascntresetthe{corollary}
\newaliascnt{conjecture}{theorem}
\newtheorem{conjecture}[conjecture]{Conjecture}
\aliascntresetthe{conjecture}
\theoremstyle{definition}
\newaliascnt{definition}{theorem}
\newtheorem{definition}[definition]{Definition}
\aliascntresetthe{definition}
\newaliascnt{example}{theorem}
\newtheorem{example}[example]{Example}
\aliascntresetthe{example}
\theoremstyle{remark}
\newaliascnt{remark}{theorem}
\newtheorem{remark}[remark]{Remark}
\aliascntresetthe{remark}

\newcommand{\Av}{\operatorname{Av}}

\newcommand{\nz}{\operatorname{nz}}
\newcommand{\supp}{\operatorname{supp}}

\newcommand{\N}{\mathbb N}
\newcommand{\emptyword}{\varepsilon}
\newcommand{\one}{\mathbf 1}

\title[Protected tails and polynomial-time enumeration]
      {Protected tails and polynomial-time enumeration of permutations
       avoiding a direct sum of an increasing pattern and 231}
\author{Henning Arn\'{o}r Skeggi \'{U}lfarsson}
\address{Department of Computer Science, Reykjavik University,
Menntavegur 1, 102 Reykjavik, Iceland}
\email{henningu@ru.is}
\date{September 2026}

\begin{document}
\raggedbottom

\begin{abstract}
We give an algorithm counting the permutations that avoid a fixed
pattern of the following form: the direct sum of an increasing pattern and
231.  The first members of the family are 1342 and 12453.  For each member
the algorithm uses polynomially many operations and stored
integers, with degrees that grow linearly in the length of the pattern.  It
comes from a recurrence that reads a permutation from left to right and
records the constraints that the letters read so far impose on those still
unread.  This recurrence has exponentially many states, but part of each
state is protected: later steps carry it along unchanged and do not depend
on it, and factoring the protected part out leaves a dynamic program of
polynomial size.  For 12453 a translation symmetry sharpens the bounds to
degree seven for the operations and degree four for the storage.  We also
compute the number of 12453-avoiding permutations of every length up to 150.
The previously published series, due to Biers-Ariel (2019), reached length
38.  We also give a sampler of uniformly random avoiders.  A
floating-point implementation of it, proved to be within total variation
distance $3.5\cdot10^{-5}$ of uniform for ideal random bits, draws the one
million
12453-avoiding permutations of length 300 shown in a heatmap.  The literal
and kernel recurrences for 1342 and 12453 are verified in the Lean~4 proof
assistant.
\end{abstract}

\subjclass[2020]{05A05, 05A15, 05A16, 68W40, 68V15}
\keywords{permutation pattern, pattern avoidance, polynomial-time
algorithm, Wilfian formula}

\maketitle

\section{Introduction}

A permutation $\pi=\pi_1\cdots\pi_n$ of
$[n]=\{1,\ldots,n\}$ \emph{contains} another permutation
$\tau$ of $[k]$ if there are indices
$1\leq i_1<\cdots<i_k\leq n$ for which
$\pi_{i_1},\ldots,\pi_{i_k}$ has the same relative order as $\tau$. In this
case we say that $\tau$ is a \emph{pattern} in $\pi$.
Otherwise $\pi$ \emph{avoids} $\tau$.  For example, the entries
$2,5,7,9,6$ in $251796834$ have relative ranks
$1,2,4,5,3$, and therefore form an occurrence of $12453$.  We write
$\Av_n(\tau)$ for the set of $\tau$-avoiding
permutations of $[n]$, and $\Av(\tau)=\bigcup_{n\geq0}\Av_n(\tau)$.  For a
set $B$ of patterns, $\Av(B)$ is the set of permutations that avoid every
member of $B$, and $\Av(B)$ is \emph{finitely based} if $B$ is finite.
Throughout, $\N=\{0,1,2,\ldots\}$.

The basic enumerative problem is to determine
\[
  a_n(\tau)=\lvert\Av_n(\tau)\rvert.
\]
By \emph{polynomial-time enumeration} we mean computing this integer, not
listing the avoiding permutations.  In other words, we seek a \emph{Wilfian formula} in the sense of
Wilf~\cite{Wilf1982} and Zeilberger~\cite{Zeilberger1998}.  For a fixed pattern,
the theorem of Marcus and Tardos~\cite{MarcusTardos2004} gives an
exponential upper bound $a_n(\tau)\leq c(\tau)^n$ on the counting sequence.
Arratia~\cite{Arratia1999} showed that $a_n(\tau)^{1/n}$ has a limit in
$[1,\infty]$.  The limit, which is finite by this bound, is called the \emph{Stanley--Wilf
limit} of $\tau$, or the \emph{growth rate} of $\Av(\tau)$.  The task is to obtain $a_n(\tau)$ in at most $P(n)$
steps, where $P$ is a polynomial.
No general principle
can apply to all finitely based classes, since
Garrabrant and Pak~\cite{GarrabrantPak2016} constructed one whose counting
sequence cannot be computed in polynomial time
unless $\mathsf{EXP}=\mathsf{\oplus EXP}$.\footnote{Here
$\mathsf{EXP}$ is the class of decision problems solvable in deterministic
exponential time, $\mathsf{\oplus EXP}$ is the class of problems that ask
whether a nondeterministic exponential-time machine has an odd number of
accepting computations, and the two classes are widely believed to differ.}

If $\alpha$ and $\beta$ are permutations of $[r]$ and $[s]$, their
\emph{direct sum}
$\alpha\oplus\beta$ is the word obtained by writing $\alpha$ and then
writing $\beta$ with every value increased by $r$.  We will focus on the following direct-sum
family.  Put
\begin{equation}\label{eq:family}
 \iota_d=12\cdots d,
 \qquad
 \beta_d=\iota_d\oplus231
 =12\cdots d\,(d+2)(d+3)(d+1).
\end{equation}
Thus
\[
 \beta_1=1342,\qquad \beta_2=12453,\qquad
 \beta_3=123564,\qquad \beta_4=1234675.
\]
The case $d=1$ is already enumerated algebraically by B\'ona
\cite{Bona1997}.
The family itself also appears in B\'ona~\cite{Bona2005}: from
$\beta_d=1\oplus\beta_{d-1}$ for $d\geq2$ he obtained a recursive deletion
rule for \emph{left-to-right minima} (entries smaller than every entry to
their left), and, starting from the Stanley--Wilf limit $8$ of
$1342$~\cite{Bona1997}, used it to determine the Stanley--Wilf limits of the
family \cite[Proposition~5.1 and Theorem~5.2]{Bona2005}.  The rule determines
the growth rates but not the exact counts.
The next case is $d=2$, giving $\beta_2 = 12453$.  The counting sequence begins
\begin{equation}\label{eq:first-terms}
  1,1,2,6,24,119,694,4581,33286,260927,2174398,19053058,\ldots.
\end{equation}

In the census of Clisby, Conway, Guttmann, and
Inoue~\cite{ClisbyConwayGuttmannInoue2022}, the $12453$ sequence was known
through $n=38$ from a specialized program of
Biers-Ariel~\cite{BiersArielA116485}, posted with OEIS
A116485~\cite{OEISA116485}.  The program uses a
composition-valued state of unbounded length, and the number of its states
grows exponentially.  Our starting point is the same recurrence: for $d=2$
the literal recurrence developed below coincides with Biers-Ariel's, state
for state (\cref{sec:literal}), and we extend it to the whole family
$\beta_d$.  We then show how
to shrink its exponential state space to polynomial size.

The family also has a description by a sorting device.  Albert, Homberger,
Pantone, Shar, and Vatter \cite{AlbertHombergerPantoneSharVatter2018} study
the $\mathcal C$-machine, which reads the entries $1,2,\ldots,n$ in order
and either outputs each one directly or pushes it anywhere into a container
whose contents must stay order-isomorphic to a permutation in $\mathcal C$,
and which outputs entries from the container only from its left end.  For
a permutation $\tau$ of $[k]$, the \emph{skew sum} $1\ominus\tau$ is the
permutation $k+1,\tau_1,\ldots,\tau_k$, which places a new largest entry
before $\tau$, and the \emph{complement} $\tau^{c}$ replaces each entry $x$
of $\tau$ by $k+1-x$.  Their Theorem~1.1 shows that the $\Av(B)$-machine
generates exactly $\Av(1\ominus B)$, where
$1\ominus B=\{1\ominus\tau:\tau\in B\}$, and they observe that $\Av(4213)$,
the complement of $\Av(1342)$, is generated by the $\Av(213)$-machine,
answering a question of B\'ona.  Since
$\beta_d^{\,c}=1\ominus\beta_{d-1}^{\,c}$ for $d\geq2$, the class
$\Av(\beta_d)$ is, up to complementation, the class generated by the
$\Av(\beta_{d-1}^{\,c})$-machine.  For $12453$ the container
class is $\Av(4213)$.  They prove that the generated class has a rational generating function when
$\mathcal C$ is finite, an algebraic one when $\mathcal C$ is bounded, that is,
when $|\mathcal C_n|$ is bounded by a constant, and a polynomial-time counting
algorithm when $|\mathcal C_n|$ is bounded by a polynomial in $n$.  The container
class $\Av(4213)$ has growth rate $8$, so none of these conditions applies to
$12453$.

Suppose $\pi$ avoids $\beta_d$.  Every occurrence of $12\cdots d$ in a
prefix of $\pi$ forces the unread values above it to be read in a
$231$-avoiding order, and we record the conditions created by a prefix as
an ordered stack of intervals of unread values (\cref{sec:obligations}).
Reading one more letter changes the control, that is, the counts of unread
values that no condition restricts, or the active head of the stack, or
both.  The deferred intervals are not touched until the head has been
consumed, and the transitions up to that point do not depend on them.  A
transfer kernel records the control at which this happens.  This removes
the stack from the memoization key, and the resulting table has polynomial
size.

Our main result is the following.
The notation $O_d(\cdot)$ allows the hidden constant to depend on the fixed
family parameter $d$.

\Needspace{12\baselineskip}
\begin{theorem}\label{thm:main}
For every fixed $d\geq1$ and every $N\geq1$, the numbers
\[
  \lvert\Av_0(\beta_d)\rvert,\ldots,\lvert\Av_N(\beta_d)\rvert
\]
can be computed using $O_d(N^{3d+2})$ integer arithmetic operations
and $O_d(N^{2d+1})$ stored integers.  All integers involved have
$O_d(N\log N)$ bits.
Therefore the bit complexity\footnote{The operation count treats each
integer addition or multiplication as one step.  The bit complexity counts
operations on single bits, so the sizes of the integers enter through the multiplication cost $M(b)$
defined below.}
is polynomial.  More precisely it is
\[
  O_d\!\left(N^{3d+2} M(N\log N)\right),
\]
where $M(b)$ is the cost of multiplying $b$-bit integers, and the bit
storage is $O_d(N^{2d+2}\log N)$.
\end{theorem}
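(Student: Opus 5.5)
The plan is to build the algorithm in three stages: the literal left-to-right recurrence, the protected-tail factorization, and the resulting polynomial table. The counts then follow, and the complexity bounds come from counting table entries and the work per entry.

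\emph{Literal recurrence.} I read $\pi$ from left to right. After a prefix has been read, I record which unread values are constrained and how. An unread value $v$ becomes obliged once some occurrence of $\iota_d$ in the prefix lies entirely below $v$. From then on, the unread values above the top of that occurrence must appear in a $231$-avoiding order. Together with the $231$-constraints already forced by letters read among them, this yields the ordered stack of intervals of \cref{sec:obligations}. The state is $(c,S)$. Here $c$ is the control: for each $j<d$, the minimum top value of an increasing subsequence of length $j$ in the prefix, or equivalently counts of unread values in the $d$ zones these thresholds determine. $S$ is the stack of obligation intervals. I first prove that the literal recurrence is correct. A one-letter extension is legal iff it creates no $\beta_d$, and the state after the extension depends only on the old state and the relative rank of the new letter. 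This is checked by a case analysis on whether the letter falls in the head interval, in a deferred interval, or in free territory. Deferred intervals are forbidden, since reading one would complete a $231$ with the head.

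\emph{Protected tails.} The key lemma is that while the head interval is nonempty, every legal transition leaves the deferred part of $S$ unchanged, and the set of legal transitions together with the new control and head depend only on $(c,\text{head})$. Once the head is exhausted, the next deferred interval becomes the head. Consequently, the number of completions of $(c,S)$ factors through a transfer kernel $K(c,h\to c')$. This kernel counts the ways to go from control $c$ with head size $h$ to the first moment the head is consumed, arriving at control $c'$. The count of completions satisfies
\[
 F(c,[h_1,h_2,\ldots]) = \sum_{c'} K(c,h_1\to c')\,F(c',[h_2,\ldots]).
\]
Unrolling this makes the memoization key $(c,h)$ together with a target control, with no stack. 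A control consists of $O(d)$ numbers bounded by $N$, but the nesting structure shows that only about $d$ of them are independent. Together with the head size, a kernel entry is indexed by $O_d(N^{2d+1})$ tuples, which gives the storage bound. Each entry is computed by summing over the rank of the next letter, an $O(N)$ choice, and over the intermediate control reached at the next head-consumption event, an $O(N^{d})$ choice. This gives $O_d(N^{3d+2})$ operations in total. The counts $|\Av_n(\beta_d)|$ for all $n\le N$ are read off the same table: I start from the empty prefix with all $n$ values free, so one table serves all lengths because the kernel depends only on the sizes.

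\emph{Sizes and bit cost.} Every stored integer counts a subset of permutations of length at most $N$, so it is at most $N!$ and has $O(N\log N)$ bits. The bit complexity is then $O_d(N^{3d+2}M(N\log N))$, and the storage is $O_d(N^{2d+1})\cdot O(N\log N)$ bits.

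\emph{Main obstacle.} The hard step is the protected-tail lemma together with the exact shape of the control. I must show that the letters read inside the head and in free zones never create a new obligation that nests inside or merges with a deferred interval in a way that depends on that interval's size. I also must show that the control really needs only about $d$ free parameters. I would attack this first by proving the monotonicity invariant that intervals in $S$ are nested and decreasing, so that any new obligation is always pushed on top. That invariant keeps the tail untouched and makes the kernel well defined.
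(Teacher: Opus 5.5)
Your plan matches the paper's proof. The pieces line up one for one: the band-size control $\mathbf p\in\N^d$ together with the interval stack; kernels $K_\ell(\mathbf p,\mathbf t)$ counting paths up to the first exposure of the tail; a first-move recurrence whose split term sums over the intermediate control at which the lower part of the head is exhausted; and the same accounting of $O_d(N^{d+1})$ source rows with $O_d(N^d)$ terminal controls each, $O(N)\cdot O_d(N^d)$ work per entry, and integers of $O(N\log N)$ bits.

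Two statements need correcting. First, a letter is legal when the extended prefix still has a $\beta_d$-avoiding completion, not merely when it creates no $\beta_d$. For $d=1$ and $n=4$, the prefix $134$ avoids $1342$, yet $4$ is illegal after $1,3$. The correct criterion is exactly your rule that deferred values are forbidden. Second, a split does change the deferred part, since it pushes the upper part of the head in front of it. What stays protected is the original marked tail, which is why the split term uses $K_{(a,b)}=K_aK_b$.
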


In particular, the family theorem gives $O(N^5)$ operations and $O(N^3)$
stored integers for $\Av(1342)$, and $O(N^{8})$ and $O(N^5)$ for
$\Av(12453)$.  An additional translation symmetry, proved in
\cref{sec:d2-support}, improves this to $O(N^7)$ and $O(N^4)$ for
$\Av(12453)$ (\cref{cor:d2-complexity}).  The same symmetry holds for
every $d$ and lowers both exponents of \cref{thm:main} by one.  The
theorem gives a polynomial bound for each fixed $d$, not a uniform one when
$d$ is part of the input.

To our knowledge, no polynomial-time enumeration algorithm was
previously available for the family $\Av(\iota_d\oplus231)$ at every fixed
$d$, or even for the single class $\Av(12453)$.  In this paper we show that
the literal recurrence is a pushdown system: every transition acts on the
interval stack through its active head alone, so the completion counts
factor through transfer kernels
(\cref{thm:protected-tail-principle,cor:protected-tail}), although the
number of reachable stack states is at least the Fibonacci number
$F_{n-d}$ (\cref{prop:exponential}).  The factorization itself is the
counting form of the classical conversion of a pushdown automaton into a
grammar (\cref{rem:pop-sequences}).  What is new is the combinatorial input
that makes it apply: for every $d$, the conditions created by a prefix form a
stack that the scan touches only at its head
(\cref{lem:separators,prop:scan-states,prop:state-invariant}), and the number
of controls is polynomial in $n$, so the factorization gives a transfer table
of polynomial size rather than a finite grammar.  We prove the family bounds of
\cref{thm:main} and two translation quotients for $12453$
(\cref{lem:d2-translation,lem:d2-second-translation}).  We also provide
the values $\lvert\Av_n(12453)\rvert$ for $n\leq150$, computed by Chinese remaindering
with a proved bound and checked by separately written implementations
(\cref{tab:terms-150} and the supplement).  The
transfer tables can also be used to generate uniformly random elements of
$\Av_n(12453)$ in polynomial time (\cref{prop:sampling}), and the literal and
kernel recurrences for $d\leq2$ are machine-checked in Lean~4
(\cref{sec:formal}).

The paper is organized as follows.  In \cref{sec:obligations} we describe the
conditions that a scanned prefix imposes on its completions, represent them
by an interval stack, and develop the whole method for $d=1$, where
$\Av(1342)$ serves as a prototype: the literal recurrence, the protected-tail
factorization of \cref{thm:protected-tail-principle}, and the resulting
polynomial algorithm.  In \cref{sec:frontier} we replace the single minimum
by the $d$ patience-sorting thresholds and their bands.  In
\cref{sec:literal} we derive the literal recurrence for every $d$ and show
that its state space is exponential.  In \cref{sec:kernels} we apply the
factorization to it and obtain the transfer kernels.  In \cref{sec:algorithm}
we bound their supports and prove \cref{thm:main}.  In \cref{sec:d2-support}
we specialize to $12453$ and use a translation symmetry to reach $O(N^7)$
operations and $O(N^4)$ storage, and a second one to halve the storage
constant.  In \cref{sec:computations} we
compute the values $\lvert\Av_n(12453)\rvert$ for $n\leq150$ and describe the uniform
sampler.  In \cref{sec:formal} we describe the Lean development, with
details in \cref{app:lean}.  In \cref{sec:discussion} we place the class among the
length-five Wilf classes and state a conjecture on its asymptotics.
\Cref{app:float} bounds the distance of the floating-point sampler from
uniformity.

\section{Triggers, stacks, and the complete one-threshold prototype}
\label{sec:obligations}

We first describe the restrictions that a scanned prefix imposes on its
completions and show that they admit an interval-stack representation for
every $d\geq1$.  We then develop the counting algorithm completely for
$d=1$, which isolates the protected-tail construction before the additional
threshold coordinates are introduced.  The scan states and their legal
moves (\cref{def:scan-state,lem:legal-moves,lem:separators,prop:scan-states})
are defined and studied for every $d$ and used unchanged in
\cref{sec:literal}, and the proof of \cref{thm:protected-tail-principle}
applies verbatim in \cref{sec:kernels}.

Containment and avoidance are defined for words with distinct entries in
the same way, by relative order.  The \emph{standardization} of such a word is
obtained by replacing the smallest entry by $1$, the next smallest by $2$,
and so on.  If $X$ is a set of values, $w|_X$ denotes the
subword formed by retaining only the entries belonging to $X$.  An
\emph{increasing $d$-subsequence} of a permutation $\pi$, or more generally of a
word $\pi$ with distinct entries, is a choice of positions
$i_1<\cdots<i_d$ with
$\pi_{i_1}<\cdots<\pi_{i_d}$.  We call its last entry a
\emph{$d$-trigger}.  The positions need not be adjacent.

\begin{lemma}[Trigger lemma]\label{lem:trigger}
A permutation $\pi=\pi_1\cdots\pi_n$ avoids $\beta_d$ if and only if, for
every increasing $d$-subsequence ending at $c=\pi_j$, the word
\[
  \pi_{j+1}\cdots\pi_n\big|_{\{x:x>\pi_j\}}
\]
avoids $231$.
\end{lemma}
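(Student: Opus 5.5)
We prove both directions directly from the definition of $\beta_d=\iota_d\oplus231$, by splitting an occurrence of $\beta_d$ into its first $d$ letters and its last three.

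For the forward direction, we argue by contraposition.  Suppose some increasing $d$-subsequence $\pi_{i_1}<\cdots<\pi_{i_d}$ ends at $c=\pi_j$, so $j=i_d$.  Suppose also that the word $\pi_{j+1}\cdots\pi_n|_{\{x:x>c\}}$ contains $231$, say at positions $j<p<q<r$ with $\pi_r<\pi_p<\pi_q$.  All three of these values exceed $c$, and therefore exceed every $\pi_{i_t}$.  Their positions also follow $i_d$.  Hence the $d+3$ entries $\pi_{i_1},\ldots,\pi_{i_d},\pi_p,\pi_q,\pi_r$ appear in this order.  The first $d$ are increasing and lie below the last three, which form a $231$.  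Their relative order is therefore $12\cdots d\,(d+2)(d+3)(d+1)=\beta_d$, so $\pi$ contains $\beta_d$.

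For the converse, suppose $\pi$ contains $\beta_d$ at positions $i_1<\cdots<i_d<p<q<r$.  By the definition of the direct sum, the values satisfy $\pi_{i_1}<\cdots<\pi_{i_d}<\pi_r<\pi_p<\pi_q$.  So $\pi_{i_1},\ldots,\pi_{i_d}$ is an increasing $d$-subsequence ending at $c=\pi_{i_d}$, with $j=i_d$.  The entries $\pi_p,\pi_q,\pi_r$ lie after position $j$ and all exceed $c$.  They therefore survive in the restricted word $\pi_{j+1}\cdots\pi_n|_{\{x:x>c\}}$, where they still occur in the order $p<q<r$ and form a $231$.  This violates the stated condition.

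There is no real obstacle here.  The one point to state carefully is that the direct-sum structure forces every letter of the $231$ part to exceed every letter of the $\iota_d$ part.  This is exactly why restricting to values above the trigger $c$ loses nothing, in both directions.  The argument also holds verbatim for words with distinct entries, which the paper will need later.
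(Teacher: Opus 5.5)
Your proof is correct and is essentially the paper's argument: both split an occurrence of $\beta_d=\iota_d\oplus231$ into its first $d$ entries, an increasing $d$-subsequence ending at the trigger $c$, and its last three entries, a $231$ lying after $c$ with all values above $c$. Your version just writes out the positions and value inequalities that the paper's two-line proof leaves implicit.
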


\begin{proof}
Suppose $\pi$ contains $\beta_d$.  The first $d$ entries of an occurrence
form an increasing $d$-subsequence, and its last three entries form a $231$ whose values are all
larger.  Conversely, an increasing $d$-subsequence followed by such a $231$
has relative order $\iota_d\oplus231=\beta_d$.
\end{proof}

When a trigger $c$ is read, its \emph{projection}, the subword of the
letters after $c$ that are larger than $c$, must avoid $231$.  Later in the scan, some letters of that projection have already been
read.  It is not enough to test the unread part of the projection, since an
occurrence of $231$ may use both read and unread letters.  The interval stack
below records the resulting conditions on the unread letters.

More formally, the \emph{$231$-obligation created by a trigger $c=\pi_j$}
requires that
\(
 \pi_{j+1}\cdots\pi_n|_{\{x:x>c\}}
\)
avoid $231$.  Let $\sigma=\sigma_1\cdots\sigma_k$ be
the scanned prefix.  The letters of $[n]$ not in $\sigma$ are \emph{unread},
and a \emph{completion} of $\sigma$ is a word $w$ using each unread letter
once.

\subsection{The Catalan interval stack}\label{sec:stack}

Here and throughout, an \emph{interval} is a consecutive block of the
ordered set of currently unread values.  Its elements need not be
consecutive integers.  The \emph{local rank} of an element $x$ of a finite
totally ordered set $I$ is $|\{y\in I:y\leq x\}|$.  The following
first-letter decomposition of $\Av(231)$ applies to an arbitrary finite
ordered alphabet.

\Needspace{12\baselineskip}
\begin{lemma}[First-letter lemma]\label{lem:first-letter}
Let $I$ be a finite totally ordered set of size $\ell$, and let $x\in I$ have
local rank $r$.  A permutation of $I$ beginning with
$x$ avoids $231$ if and only if
all values below $x$ occur before all values above $x$, and the two induced
subwords avoid $231$.  Thus reading $x$ replaces an interval of size $\ell$
by intervals of sizes $r-1$ and $\ell-r$, in that order, with zero parts
deleted.
\end{lemma}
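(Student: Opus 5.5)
We prove the two implications directly from the definition of a $231$ occurrence. Write the permutation as $x\,w$ with $w$ a word on $I\setminus\{x\}$. Set $L=\{y\in I:y<x\}$, of size $r-1$, and $U=\{y\in I:y>x\}$, of size $\ell-r$. The second sentence of the statement then follows by reading off the sizes of $L$ and $U$: after $x$ is read, the remaining condition is that $w|_L$ avoids $231$, then $w|_U$ avoids $231$, with every letter of $L$ preceding every letter of $U$. Each of these is a constraint of the same Catalan type on a smaller interval, and an empty block is dropped. Since only relative order matters, the argument works over an arbitrary finite ordered alphabet, so standardization is never needed.

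\emph{Necessity.} Suppose $x\,w$ avoids $231$. Any subword of a $231$-avoider avoids $231$, so both $w|_L$ and $w|_U$ avoid $231$. Now suppose some $u\in U$ occurs in $w$ before some $v\in L$. Then $x,u,v$ appear in that order with $v<x<u$, which is an occurrence of $231$ with $x$ as the ``2'', $u$ as the ``3'' and $v$ as the ``1''. This is a contradiction, so all of $L$ precedes all of $U$.

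\emph{Sufficiency.} Assume the separation holds and both induced subwords avoid $231$. Suppose $a,b,c$ at increasing positions form a $231$, so $c<a<b$. We check where the entries can lie.
\begin{itemize}
\item If $a=x$, then $b\in U$ and $c\in L$, with $b$ before $c$. This contradicts the separation.
\item If $a\in U$, then $c$ comes after $a$, so by the separation $c\notin L$; also $c\neq x$, since $x$ is first. Hence $c\in U$, and then $b>a>x$ puts $b\in U$ too. So all three entries lie in $U$, contradicting the hypothesis on $w|_U$.
\item If $a\in L$, then $c<a$ forces $c\in L$. Since $b$ lies between $a$ and $c$ in position, the separation forces $b\in L$. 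So all three entries lie in $L$, contradicting the hypothesis on $w|_L$.
\end{itemize}
Every case gives a contradiction, so $x\,w$ avoids $231$.

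The only step that needs care is this case split on where the ``2'' of a putative occurrence lies, and the separation property handles each case in one line.
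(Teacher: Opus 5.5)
Your proof is correct and is essentially the paper's argument: necessity comes from the triple $x,u,v$, and for sufficiency the separation condition forces any $231$ to lie inside a single block. The only differences are bookkeeping. The paper splits cases by which blocks the three letters occupy, while you split by where the first entry of the occurrence lies, and you also state explicitly the trivial necessity of the subword conditions, which the paper leaves implicit.
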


\begin{proof}
Suppose an upper value occurs before a lower value after $x$.  Then $x$ and
these two values form a $231$.  Conversely, if the lower and upper subwords occur in
that order, a $231$ cannot use letters from both: both position and value
increase from the first block to the second.  Nor can a $231$ use $x$ and
two letters from just one block: $x$ is larger than every lower letter and
smaller than every upper letter, while the first entry of a $231$ has
middle rank.  If it uses $x$, one lower letter, and one upper letter, their
order is $213$.  Avoidance therefore reduces to avoidance inside the two
blocks.
\end{proof}

We write $\Av(231)(I)$ for the $231$-avoiding words using each value of $I$
once.  If $I<J$, that is, every element of $I$ is smaller than every
element of $J$, and $u$ and $w$ are words on $I$ and $J$, then $u\oplus w$
is the concatenation $uw$.  For sets of such words, $\oplus$ denotes the set
of all concatenations.  Consider the projection created by one trigger $c$,
and read its letters one at a time.  By \cref{lem:first-letter}, the first
letter $x$ splits the rest of the projection into a lower and an upper
interval, and every letter of the lower interval must be read before any
letter of the upper one.  The next letter of the projection therefore lies
in the lower interval while that interval is nonempty, and the lemma,
applied to the subword on that interval, splits it in turn.  The letter itself then
plays no further role, since both conditions of the lemma concern only the
letters after it.  Repeated application
of \cref{lem:first-letter} thus represents what the obligation of $c$
requires of the unread letters by an ordered interval stack
\[
  I_1<I_2<\cdots<I_s,
\]
which we also write $I_1\mid I_2\mid\cdots\mid I_s$.  That is, the unread letters of the projection must form a word in
\begin{equation}\label{eq:stack-form}
  \Av(231)(I_1)\oplus\Av(231)(I_2)\oplus\cdots
  \oplus\Av(231)(I_s).
\end{equation}
The read letters of the projection do not appear in \eqref{eq:stack-form},
and none of them is needed again.  For example, relabel the values of a projection as
$1,\ldots,6$ and suppose that its first two letters are $4$ and $2$.
Reading $4$ leaves the stack $\{1,2,3\}\mid\{5,6\}$, and reading $2$ then
leaves $\{1\}\mid\{3\}\mid\{5,6\}$.  The completion $1365$ is allowed,
while a completion beginning $31$ is excluded: together with the read
letter $2$ it would form a $231$.

The stack records the obligations created by the scanned prefix.  Further
obligations arise as the completion is read, and they are incorporated by
the stack updates described below.  We call $I_1$ the \emph{active head} and $(I_2,\ldots,I_s)$ the
\emph{deferred tail}.  Only the active head may be read next among the values
of the stack: every letter of $I_1$ must be read before any letter of
$I_2$, and so forth.  Unread values below the trigger are constrained by no
obligation created so far and may be interspersed.  For an interval of size $\ell$, an
\emph{endpoint choice} has local rank $1$ or $\ell$, and an \emph{interior
choice} has local rank in $\{2,\ldots,\ell-1\}$.

We call the gap in value
between two consecutive intervals of the current stack a \emph{separation},
and a read letter lying in that gap a \emph{separator}.  A separation may
have several separators.  An interior choice creates a new separation, at
the letter read, while an endpoint choice leaves at least one side empty
and creates none, although the letter read may lie in an existing
separation.  \Cref{lem:separators} below determines the separations of the
stack once the obligations of several triggers have been combined.

\begin{example}[Reading one projection]\label{ex:one-projection}
The permutation
\begin{equation}\label{eq:running-example}
 \pi=9,11,10,14,5,12,6,2,3,8,4,7,1,13,15
\end{equation}
of $[15]$ avoids $1342$, and therefore also $12453$, and serves as the
running example of this paper.  For $d=1$ every letter is a trigger.  The
projection created by the first letter $9$ is the word
$11,10,14,12,13,15$ formed by the later letters above $9$.  It
standardizes to $215346$ and avoids $231$, as \cref{lem:trigger} requires.
We read this projection letter by letter.  The stack starts as the single
interval $\{10,\ldots,15\}$.  The first letter $11$ has local rank $2$, an
interior choice, so \cref{lem:first-letter} requires the lower interval
$\{10\}$ to be read before the upper interval $\{12,\ldots,15\}$.  The stack
is $\{10\}\mid\{12,\ldots,15\}$.  Reading $10$ is an endpoint choice that
exhausts the head and leaves $\{12,\ldots,15\}$, and the interior choice
$14$, of local rank $3$ in that interval, splits it into
$\{12,13\}\mid\{15\}$.  The unread letters of the projection must now form
a word in $\Av(231)(\{12,13\})\oplus\Av(231)(\{15\})$: either order of $12$
and $13$, then $15$.  The read letters $11$, $10$, $14$ are not retained.
While $10$
was unread, $11$ was the only separator of the separation between $\{10\}$
and $\{12,\ldots,15\}$.  Now $14$ is the only separator of the separation
between $\{12,13\}$ and $\{15\}$, and $10$ and $11$ separate nothing.
\end{example}

Several triggers impose overlapping obligations.  We now define the object
that records all of them: a read prefix together with a stack, updated by
three kinds of move.  \Cref{prop:scan-states} then shows that the legal
prefixes are exactly the prefixes of $\beta_d$-avoiding permutations.

\begin{definition}[Scan states and legal moves]\label{def:scan-state}
Fix $d\geq1$ and $n$.  A \emph{scan state} is a pair $(\sigma,S)$.  Here
$\sigma$ is a word with distinct letters from $[n]$, called the \emph{read
prefix}, and the letters of $[n]$ not in $\sigma$ are \emph{unread}.  We
write $q$ for the least $d$-trigger of $\sigma$, and put $q=n+d$, a virtual
value above every letter, when $\sigma$ has no $d$-trigger.  The
\emph{stack} $S=(I_1,\ldots,I_s)$ is a list of nonempty sets with
$I_1<I_2<\cdots<I_s$ whose union is the set of unread letters larger than
$q$, so that each $I_j$ is an interval in the sense of \cref{sec:stack},
$I_1$ is the active head, and $(I_2,\ldots,I_s)$ is the deferred tail.  In
particular $S=\varnothing$ is the empty list, $s=0$, when $\sigma$ has
no $d$-trigger.  The \emph{initial state} is $(\emptyword,\varnothing)$: the
empty prefix and the empty stack.  Let $(\sigma,S)$ be a scan state and let $x$ be an unread letter.
Reading $x$ is a \emph{legal move}, leading to the state $(\sigma x,S')$, in
the following three cases.
\begin{enumerate}
\item[(a)] \emph{Merger}: $x$ is a $d$-trigger of $\sigma x$ and $x<q$.  Let
  $E$ be the set of unread letters strictly between $x$ and $q$.  Then
  $S'=(E\cup I_1,I_2,\ldots,I_s)$, where $I_1=\varnothing$ if $s=0$, with an
  empty set deleted.
\item[(b)] \emph{Split}: $x$ is a $d$-trigger of $\sigma x$, $x>q$, and $x$
  lies in the active head $I_1$.  Then $S'=(I_1^{<x},I_1^{>x},I_2,\ldots,I_s)$, where $I_1^{<x}$
  and $I_1^{>x}$ are the sets of elements of $I_1$ below and above $x$, with
  empty sets deleted.  The move is an endpoint or an interior choice, as defined above,
  depending on the local rank of $x$ in $I_1$.
\item[(c)] \emph{Non-trigger move}: $x$ is not a $d$-trigger of $\sigma x$.
  Then $S'=S$.  For $d=1$ every letter is a $1$-trigger, so this case does
  not occur.
\end{enumerate}
Reading any other unread letter is \emph{illegal}.  A word $\sigma$ is
\emph{legal} if the initial state reaches a state $(\sigma,S)$ by legal
moves, and we write $\mathcal S(\sigma)$ for the resulting stack, the
\emph{stack of $\sigma$}.
\end{definition}

A $d$-trigger read from a prefix without one always merges, since then
$q=n+d$.  Each move determines $S'$ from $(\sigma,S)$ and
$x$, and \cref{lem:legal-moves}(a) below shows that $(\sigma x,S')$ is again
a scan state, so the stack $\mathcal S(\sigma)$ of a legal word is
determined by $\sigma$.  Note that every unread letter $x>q$ is a
$d$-trigger of $\sigma x$, since
replacing the last entry of an increasing $d$-subsequence ending at $q$ by
$x$ gives one ending at $x$.  For $d=1$ a legal move is a new left-to-right
minimum, which merges (\cref{fig:moves}(a)), or a letter of the active head,
which splits (\cref{fig:moves}(b)).  \Cref{fig:merger} shows the merger of
the running example in detail.

\begin{figure}[b]
\centering
\begin{tikzpicture}[x=1cm,y=1cm,>=stealth,font=\footnotesize]
  \draw[->] (-0.5,-1.1) -- (-0.5,2.9);
  \node[rotate=90,anchor=south] at (-0.7,0.6) {value};
  \fill[tailorange!25] (0,2.0) rectangle (1.5,2.8);
  \draw (0,2.0) rectangle (1.5,2.8);
  \node at (0.75,2.4) {$I_2,\ldots,I_s$};
  \fill[pairblue!25] (0,0.6) rectangle (1.5,1.8);
  \draw (0,0.6) rectangle (1.5,1.8);
  \node at (0.75,1.2) {$I_1$};
  \fill[darkgreen!25] (0,-0.8) rectangle (1.5,0.4);
  \draw[dashed] (0,-0.8) rectangle (1.5,0.4);
  \node at (0.75,-0.2) {$E$};
  \fill[red] (0.75,-1.3) circle (2.2pt);
  \node[red,right=1pt] at (0.85,-1.3) {$x$};
  \draw[->] (1.75,0.8) -- (2.15,0.8);
  \fill[tailorange!25] (2.4,2.0) rectangle (3.9,2.8);
  \draw (2.4,2.0) rectangle (3.9,2.8);
  \node at (3.15,2.4) {$I_2,\ldots,I_s$};
  \fill[pairblue!25] (2.4,-0.8) rectangle (3.9,1.8);
  \draw (2.4,-0.8) rectangle (3.9,1.8);
  \node at (3.15,0.5) {$E\cup I_1$};
  \node at (1.95,-2.0) {(a) merger};
  \fill[tailorange!25] (4.8,2.0) rectangle (6.3,2.8);
  \draw (4.8,2.0) rectangle (6.3,2.8);
  \node at (5.55,2.4) {$I_2,\ldots,I_s$};
  \fill[pairblue!25] (4.8,0.6) rectangle (6.3,1.8);
  \draw (4.8,0.6) rectangle (6.3,1.8);
  \node at (5.55,1.55) {$I_1$};
  \fill[red] (5.4,1.0) circle (2.2pt);
  \node[red,right=1pt] at (5.5,1.0) {$x$};
  \draw[->] (6.55,0.8) -- (6.95,0.8);
  \fill[tailorange!25] (7.2,2.0) rectangle (8.7,2.8);
  \draw (7.2,2.0) rectangle (8.7,2.8);
  \node at (7.95,2.4) {$I_2,\ldots,I_s$};
  \fill[pairblue!25] (7.2,1.25) rectangle (8.7,1.8);
  \draw (7.2,1.25) rectangle (8.7,1.8);
  \node at (7.95,1.525) {$I_1^{>x}$};
  \fill[pairblue!25] (7.2,0.6) rectangle (8.7,1.15);
  \draw (7.2,0.6) rectangle (8.7,1.15);
  \node at (7.95,0.875) {$I_1^{<x}$};
  \node at (6.75,-2.0) {(b) split};
  \fill[tailorange!25] (9.6,2.0) rectangle (11.1,2.8);
  \draw (9.6,2.0) rectangle (11.1,2.8);
  \node at (10.35,2.4) {$I_2,\ldots,I_s$};
  \fill[pairblue!25] (9.6,0.6) rectangle (11.1,1.8);
  \draw (9.6,0.6) rectangle (11.1,1.8);
  \node at (10.35,1.2) {$I_1$};
  \fill[red] (10.35,-1.3) circle (2.2pt);
  \node[red,right=1pt] at (10.45,-1.3) {$x$};
  \draw[->] (11.35,0.8) -- (11.75,0.8);
  \fill[tailorange!25] (12.0,2.0) rectangle (13.5,2.8);
  \draw (12.0,2.0) rectangle (13.5,2.8);
  \node at (12.75,2.4) {$I_2,\ldots,I_s$};
  \fill[pairblue!25] (12.0,0.6) rectangle (13.5,1.8);
  \draw (12.0,0.6) rectangle (13.5,1.8);
  \node at (12.75,1.2) {$I_1$};
  \node at (11.55,-2.0) {(c) non-trigger move};
\end{tikzpicture}
\caption{The three legal moves of \cref{def:scan-state}, with values
increasing upwards and the active head $I_1$ at the bottom of the stack.
In a merger the letter $x$ lies below the least $d$-trigger $q$, and the
unread values $E$ strictly between $x$ and $q$ join the head.  In a split
$x$ lies in the head and cuts it into the values below and above $x$.  A
non-trigger move leaves the stack unchanged.  The deferred tail is unchanged
in all three moves.}
\label{fig:moves}
\end{figure}

\begin{lemma}[Legal moves]\label{lem:legal-moves}
Let $(\sigma,S)$ be a scan state and let $x$ be an unread letter.
\begin{enumerate}
\item[(a)] If reading $x$ is a legal move, then $(\sigma x,S')$, with $S'$
  as in \cref{def:scan-state}, is a scan state.
\item[(b)] Reading $x$ is illegal if and only if $x$ lies in one of the deferred
  intervals $I_2,\ldots,I_s$.
\end{enumerate}
\end{lemma}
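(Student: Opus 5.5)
We argue directly from \cref{def:scan-state}. Throughout, $q$ denotes the least $d$-trigger of $\sigma$, or $n+d$ if there is none, and $q'$ denotes the same quantity for $\sigma x$. A scan state is a stack of nonempty sets, strictly ordered $I_1<\cdots<I_s$, whose union is exactly the set of unread letters above the least trigger. So for part (a) there are two things to check. The first is how $q'$ relates to $q$. The second is that $S'$ still consists of nonempty, strictly ordered sets whose union is $\{y\notin\sigma x : y>q'\}$. We treat the three moves separately.

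For part (a):
\begin{itemize}
\item In a merger, $x$ is a $d$-trigger of $\sigma x$ with $x<q$, so $q'=x$. The letters $>x$ that are unread after reading $x$ fall into two groups. Those below $q$ form $E$; we note that $q$ itself is read, or is virtual, so it is not in $E$. Those above $q$ form $I_1\cup\cdots\cup I_s$. Since $E<I_1$, the set $E\cup I_1$ is disjoint from $I_2,\dots,I_s$ and lies below them. Deleting an empty set preserves nonemptiness.
\item In a split, $x>q$, so $q'=q$. Removing $x$ from $I_1$ and cutting $I_1$ at $x$ gives $I_1^{<x}<I_1^{>x}<I_2$, and the union is still exactly the unread letters above $q$.
\item In a non-trigger move, $x$ is not a trigger of $\sigma x$, so $q'=q$. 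By the remark after the definition, every unread letter above $q$ is a trigger of $\sigma x$, so $x<q$. Hence $x$ is not in the union, and $S'=S$ still has the right union.
\end{itemize}
Intervals remain consecutive blocks of the unread values, because each piece is the intersection of the unread set with a value range.

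For part (b), we first show that reading a letter of a deferred interval is illegal. Take $x\in I_j$ with $j\geq2$, so $x>q$. Then $x$ is a trigger, so move (c) does not apply. Move (a) does not apply because $x>q$. Move (b) does not apply because $x\notin I_1$.

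Conversely, suppose $x$ is unread and not in $I_2\cup\cdots\cup I_s$. We split into cases.
\begin{itemize}
\item If $x>q$, then $x$ lies in the union of the stack, hence in $I_1$. It is a trigger, so reading it is a split.
\item If $x<q$ (equality is impossible, since $q$ is read or virtual), then either $x$ is a trigger of $\sigma x$, giving a merger, or it is not, giving a non-trigger move.
\end{itemize}
Thus the three moves together cover every unread letter outside the deferred tail.

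The only point needing care, and the one we expect to be the main obstacle, is the claim $q'=q$ in a non-trigger move and in a split. This requires that no new $d$-trigger falls below $q$. Reading $x$ creates at most one new trigger, namely $x$ itself; all other triggers are already triggers of $\sigma$. In the non-trigger case $x$ is not a trigger, and in the split case $x>q$. So the least trigger is unchanged. In the merger case the new least trigger is $x$, as used above.
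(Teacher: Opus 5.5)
Your proposal is correct and follows the paper's proof essentially step for step: the same three-case check of (a) using $E<I_1$ and $q'=q$ for splits and non-trigger moves, and for (b) the same use of the fact that every unread letter above $q$ is a $d$-trigger of $\sigma x$, with your converse being the contrapositive of the paper's. Your remark that only $x$ can be a new trigger of $\sigma x$, which is what guarantees $q'=q$ in the split and non-trigger cases, is a detail the paper leaves implicit.
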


\begin{proof}
(a) In a merger, $x$ becomes the least $d$-trigger of $\sigma x$, and the
unread letters of $\sigma x$ larger than $x$ are those of $E$ together with
those larger than $q$, that is, $E\cup I_1\cup\cdots\cup I_s$, the union of
$S'$.  Every letter of $E$ is smaller than $q$ and every letter of $S$ is
larger, so $E<I_1$ and the sets of $S'$ are disjoint and increasing.  In a
split, the least $d$-trigger is still $q$, since $x>q$, and the unread
letters above $q$ lose exactly $x$, which is what the split removes.  The two
parts of $I_1$ lie below and above $x$.  In a non-trigger move, $x<q$, since
an unread $x>q$ is a $d$-trigger of $\sigma x$ as noted above, so $q$ and the
unread letters above it are unchanged, and so is $S$.

(b) Let $x$ lie in a deferred interval.  Then $x>q$, so $x$ is a
$d$-trigger of $\sigma x$, as noted above.  Therefore cases (a)
and (c) of \cref{def:scan-state} do not apply, and $x\notin I_1$ excludes
case (b).  Conversely, let reading $x$ be illegal.  Since case (c) does not
apply, $x$ is a $d$-trigger of $\sigma x$, and since case (a) does not apply,
$x>q$.  Therefore $x$ lies in the
stack, and in a deferred interval since case (b) does not apply.
\end{proof}

\begin{figure}[b]
\centering
\begin{tikzpicture}[x=0.38cm,y=0.38cm,>=stealth]
  \draw[->] (0,0) -- (16.5,0) node[below left] {\footnotesize position};
  \draw[->] (0,0) -- (0,16.5) node[above left] {\footnotesize value};
  \fill[black!6] (6,0.5) rectangle (16,15.5);
  \node[anchor=south] at (11,15.6) {\footnotesize unread};
  \node[anchor=south] at (3,15.6) {\footnotesize read};
  \fill[tailorange!25] (6,14.5) rectangle (16,15.5);
  \fill[pairblue!25]   (6,11.5) rectangle (16,13.5);
  \fill[darkgreen!25]  (6,5.5)  rectangle (16,8.5);
  \node at (11,15)   {$I_2=\{15\}$};
  \node at (11,12.5) {$I_1=\{12,13\}$};
  \node at (11,7)    {$E=\{6,7,8\}$};
  \node[align=center] at (11,2.5) {\footnotesize values below $x$: $\{1,2,3,4\}$,\\[-2pt]\footnotesize in no obligation};
  \fill (1,9) circle (3.5pt);  \node[below left=1pt,anchor=north west] at (0.3,8.8) {$q=9$};
  \fill (2,11) circle (3.5pt); \node[above=2pt] at (2,11) {$11$};
  \fill (3,10) circle (3.5pt); \node[below=2pt] at (3,10) {$10$};
  \fill (4,14) circle (3.5pt); \node[above=2pt] at (4,14) {$14$};
  \fill[red] (5,5) circle (3.5pt); \node[below=2pt,red] at (5,5) {$x=5$};
  \foreach \y in {1,...,15} \draw (-0.15,\y) -- (0.15,\y);
  \foreach \x in {1,...,5} \draw (\x,-0.15) -- (\x,0.15);
  \draw[decorate,decoration={brace,mirror,amplitude=4pt}] (16.3,14.5) -- (16.3,15.5)
    node[midway,right=5pt] {\footnotesize old tail};
  \draw[decorate,decoration={brace,mirror,amplitude=4pt}] (16.3,11.5) -- (16.3,13.5)
    node[midway,right=5pt] {\footnotesize old head};
  \draw[decorate,decoration={brace,mirror,amplitude=4pt},red] (21.5,14.5) -- (21.5,15.5)
    node[midway,right=5pt,red] {\footnotesize tail unchanged};
  \draw[decorate,decoration={brace,mirror,amplitude=4pt},red,thick] (21.5,5.5) -- (21.5,13.5)
    node[midway,right=5pt,red,align=left] {\footnotesize new head\\[-2pt]\footnotesize $E\cup I_1$};
  \node[anchor=south] at (17.5,15.6) {\footnotesize before};
  \node[anchor=south,red] at (23,15.6) {\footnotesize after};
\end{tikzpicture}
\caption{A merger (\cref{def:scan-state}(a)) for $d=1$ on the running
example \eqref{eq:running-example}: the prefix $9,11,10,14$ has been read,
leaving the head $I_1=\{12,13\}$ and the tail $I_2=\{15\}$, and the new
trigger $x=5$ merges $E=\{6,7,8\}$, the unread values between $x$ and
$q=9$, into the head.  The tail is unchanged.  See \cref{ex:one-merger}.}
\label{fig:merger}
\end{figure}

\begin{example}[One merger]\label{ex:one-merger}
We continue \cref{ex:one-projection}.  The fifth letter $5$ of $\pi$ is a
trigger smaller than the earlier triggers $9,11,10,14$, so reading it is a
merger, and \cref{fig:merger} shows the stack before and after.  In the
notation of \cref{def:scan-state}(a), $q=9$ and $E=\{6,7,8\}$, the unread
values between $5$ and $9$.  The stack $\{12,13\}\mid\{15\}$ consists of the
unread values above $9$, and the merged stack is
$\{6,7,8,12,13\}\mid\{15\}$.
The values $6,7,8$ join the head without a separation: the read letters
$9$, $10$, $11$, which lie between them and $12$ in value, were read before
the trigger $5$ and constrain nothing in $E$.  They must, however, precede
$15$: if $15$ came before some $y'\in\{6,7,8\}$, then either $15$
precedes some $y\in\{12,13\}$ and $9,14,15,y$ is an occurrence of $1342$,
or $12$ precedes $15$ and $5,12,15,y'$ is one.  Inside the head only $231$-avoidance
remains.  The permutation $\pi$ continues with $12$ before $6$, and
$13,12,6,7,8$ would have been allowed as well.  For $d=2$ the same permutation is scanned in
\cref{ex:full-state}: there the first $2$-trigger is $11$ rather than $9$,
and the corresponding merger occurs at the $2$-trigger $6$, with $q=10$ and
$E=\{7,8\}$.
\end{example}

Two unread letters are \emph{adjacent} if no unread letter lies strictly
between them.  The following lemma determines the separations of the stack
from the scanned prefix.

\begin{lemma}[Separations of the stack]\label{lem:separators}
Let $\sigma$ be a legal word, let $q$ be its least $d$-trigger, and let
$u<v$ be adjacent unread letters above $q$.  Then $u$
and $v$ lie in different intervals of $\mathcal S(\sigma)$ if and only if
some letter $x$ with $u<x<v$ was read after a $d$-trigger smaller than $u$.
\end{lemma}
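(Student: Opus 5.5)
Induction on the length of the legal word $\sigma$, following the moves of \cref{def:scan-state}. For the empty word there is no $d$-trigger, so the statement is vacuous: there are no unread letters above $q=n+d$. Suppose the claim holds for $\sigma$, and let $x$ be a legal next letter, giving $\sigma x$ with least $d$-trigger $q'$. Fix adjacent unread letters $u<v$ of $\sigma x$ above $q'$. Write $P(\sigma;u,v)$ for the condition ``some read letter $y$ with $u<y<v$ was read after a $d$-trigger smaller than $u$''. We must show that $u,v$ are separated in $\mathcal S(\sigma x)$ if and only if $P(\sigma x;u,v)$ holds. I would treat the three moves separately.

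\emph{Non-trigger move.} Here $x<q$, so $q'=q$, the stack is unchanged, and $u,v>q>x$. Hence $x$ is not between $u$ and $v$. Adjacency in $\sigma x$ is the same as in $\sigma$ for letters above $x$, and the set of read letters strictly between $u$ and $v$ is unchanged. So $P(\sigma x;u,v)=P(\sigma;u,v)$, and the inductive hypothesis gives the claim.

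\emph{Split.} Here $x\in I_1$ and $q'=q$. If $x$ is not between $u$ and $v$, then $u,v$ were adjacent before, their interval membership is unchanged (the split only separates the parts of $I_1$ below and above $x$), and $P$ is unchanged. So suppose $u<x<v$. Then in $\sigma$ the pairs $(u,x)$ and $(x,v)$ were both adjacent, and both lay in $I_1$ (when $u$ or $v$ does not exist, the split is an endpoint choice). After the split, $u\in I_1^{<x}$ and $v\in I_1^{>x}$ are separated. On the other side, $x$ is read after $q$, which is a $d$-trigger smaller than $u$, so $P(\sigma x;u,v)$ holds. Both sides are true.

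\emph{Merger.} Here $x<q$ and $q'=x$. There are two cases.
\begin{itemize}
\item If $u,v>q$, then the stack above $q$ is unchanged except for its head, which absorbs $E$ below it. So separation is unchanged. Also $P$ is unchanged, because the new trigger $x$ has no letter read after it yet, and any trigger below $u$ that was used before still works.
\item Otherwise $u\in E$ (note $u>x$). Then both $u,v\in E\cup I_1$, or $v$ is the least element of $E\cup I_1$ above $u$. Either way, $u$ and $v$ lie in the new head, so they are not separated. We must check that $P(\sigma x;u,v)$ fails. Any read letter $y$ with $u<y<v$ lies in $(x,q]$ or between $q$ and $\min I_1$; here we use that $v$ is the next unread letter. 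We need: no such $y$ was read after a $d$-trigger smaller than $u$. Every $d$-trigger of $\sigma$ is at least $q>u$, and the new trigger $x$ is the last letter, so nothing was read after it. Hence $P$ fails.
\end{itemize}

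The main obstacle is the bookkeeping in the merger case when $v$ lies in $I_1$ rather than $E$. There I must confirm that the adjacency of $u$ and $v$, and the hypothesis that all earlier triggers are $\geq q>u$, really rule out every candidate $y$, including $y=q$ itself and letters read before $q$. I would handle this by noting that $P$ refers only to triggers smaller than $u$, and that in $\sigma$ none exist.
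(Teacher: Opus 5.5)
Your proposal is correct and follows the paper's proof. Both argue by induction along the legal moves with the same three cases. In a split, the new pair is witnessed by $x$ read after $q<u$. In a merger, the new pairs (those with $u\in E$) are unseparated on both sides, because every $d$-trigger of $\sigma$ is at least $q>u$ and nothing has been read after $x$.

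There is one slip in the split case. When $x=\max I_1$ and the deferred tail is nonempty, this endpoint choice does create a new adjacent pair, with $v=\min I_2$ rather than $v\in I_1^{>x}$, which is empty. So your claim that both $u$ and $v$ lay in $I_1$ is false there. The conclusion still holds, since $u\in I_1^{<x}$ and $v\in I_2$ lie in different intervals, and this is how the paper states the case.
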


\begin{proof}
We argue by induction along the legal moves that read $\sigma$.  While no
$d$-trigger has been read there is no letter above $q=n+d$.  Let $x$ be the letter
read next.  Reading $x$ makes its two unread neighbours adjacent, when both
exist, and leaves every other adjacent pair with its members and the letters
between them.

Suppose first that reading $x$ is a merger.  Then $x$ becomes the least
$d$-trigger, and an adjacent pair $u<v$ above $x$ that was not an adjacent
pair above $q$ has $u\in E$.  Every $d$-trigger of $\sigma$ is larger than
$u$, and no letter has been read after $x$, so the second condition fails.
The stack of $\sigma$ consists of the unread letters above $q$, so $v$ lies
in $E$ or is the least element of $I_1$, and $u$ and $v$ lie in the single
interval $E\cup I_1$.  Thus the first condition fails as well.  The
other pairs keep their intervals, and $x$ lies below them and has not been
followed by any letter, so both conditions are unchanged for them.

Suppose next that reading $x$ is a split.  Then $x\in I_1$ and $x>q$.  A new
adjacent pair consists of the unread neighbours $u<x<v$ of $x$, when both
exist and $u>q$.  Then $u$ lies in the part of $I_1$ below $x$, while $v$
lies in the part above $x$ or, if $x=\max I_1$, is the least element of
$I_2$.  Thus $u$ and $v$ lie in different intervals, and $x$, read after
$q<u$, witnesses the second condition.  The other pairs keep their
intervals, and $x$ lies between the members of none of them, since those are
adjacent and $x$ was unread.

Finally, a non-trigger move reads a letter $x<q$, as in the proof of
\cref{lem:legal-moves}, and changes neither the stack nor either condition.
\end{proof}

\begin{proposition}[Scan states of avoiders]\label{prop:scan-states}
Let $\sigma$ be a legal word with stack $\mathcal S(\sigma)=(I_1,\ldots,I_s)$.
\begin{enumerate}
\item[(a)] The word $\sigma$ avoids $\beta_d$.
\item[(b)] If reading the unread letter $x$ from $\sigma$ is illegal, then no
  $\beta_d$-avoiding permutation has the prefix $\sigma x$.
\item[(c)] A word is legal if and only if it is a prefix of a
  $\beta_d$-avoiding permutation.  In particular $\Av_n(\beta_d)$ is the set
  of legal permutations of $[n]$.
\end{enumerate}
\end{proposition}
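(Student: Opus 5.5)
The plan is to prove the three parts in the order (b), (a), (c). Part (b) and the converse direction of (c) rest on \cref{lem:legal-moves}(b), \cref{lem:separators} and \cref{lem:trigger}. Part (a) uses a monotonicity property of separations.

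For (b), suppose reading $x$ from $\sigma$ is illegal. By \cref{lem:legal-moves}(b), $x$ lies in a deferred interval $I_j$ with $j\geq2$. Let $u=\max I_{j-1}$ and let $v$ be the least element of $I_j$, so $u<v\leq x$, and $u,v$ are adjacent unread letters above $q$ lying in different intervals. By \cref{lem:separators} there is a read letter $y$ with $u<y<v$ that was read after some $d$-trigger $c<u$. Any completion of $\sigma x$ reads $u$ after $x$. The letters $y,x,u$ then appear in this order, all after $c$ and all above $c$, with relative order $231$ (middle, high, low). So the projection of the trigger $c$ contains $231$, and \cref{lem:trigger} shows that no $\beta_d$-avoiding permutation has prefix $\sigma x$.

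For (a), induct on the length of the legal word. Suppose $\sigma$ avoids $\beta_d$ and reading $x$ is legal, but $\sigma x$ contains $\beta_d$. By \cref{lem:trigger}, an occurrence must end at $x$. So there are a $d$-trigger $c$ of $\sigma$, and letters $y,z$ read after $c$ with $y$ before $z$, such that $c<x<y<z$. Since $c$ is a $d$-trigger we have $c\geq q$, so $x>q$. Hence the move is a split and $x$ lies in the active head.

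Now look at the moment $y$ was read. The least trigger at that time was at most $c<x$, and $x,y,z$ were all unread above it. So $y$ was read legally as a split of the head at that time, and $x$ and $z$ lay in that head, with $x$ ending in the part below $y$ and $z$ in the part above.

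The key monotonicity fact is that a separation between two unread letters, once created, persists while both stay unread. This follows from the characterization in \cref{lem:separators}: the witness $y$ remains read after a trigger below both letters. Hence, while $x$ is unread, $z$ lies in a strictly later interval than $x$, so $z$ is never in the active head. Reading $z$ before $x$ was therefore illegal by \cref{lem:legal-moves}(b), a contradiction. I expect this step, making the persistence of separations precise (including across mergers, which only add the new set $E$ below the head), to be the main obstacle. It should follow by applying \cref{lem:separators} to the adjacent pairs of unread letters between $x$ and $z$.

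Finally, (c) is a combination. If $\sigma$ is legal, extend it one letter at a time: some unread letter is always legal, for instance any element of the head or any letter below $q$, and if no letter lies below $q$ then the head is nonempty whenever unread letters remain, by \cref{lem:legal-moves}(b). Proceeding this way yields a legal permutation, which avoids $\beta_d$ by (a). Conversely, if $\sigma$ is a prefix of an avoider, every letter of it is read legally, since otherwise (b) contradicts that the avoider has that prefix. In particular $\Av_n(\beta_d)$ is the set of legal permutations of $[n]$.
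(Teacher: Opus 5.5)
Your proof is correct and is essentially the paper's. Part (b) is the same construction from the separator witness of \cref{lem:separators}. In (a), the ``persistence'' you flag as the main obstacle is immediate: apply \cref{lem:separators} at the prefix just before $z$ to the adjacent unread pair $u'<v'$ straddling $y$, so that $x\le u'<y<v'\le z$ and $y$ was read after $c<x\le u'$. This puts $z$ outside the active head, which is exactly the paper's argument with $y_2,y_3$ in place of your $y,z$.

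The detour through the moment $y$ was read is unneeded, and there $z$ need not lie in the head, only in a later interval than $x$. For (c), your greedy choice of a legal next letter replaces the paper's explicit decreasing completion and works equally well.
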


\begin{proof}
(a) Suppose that $\sigma$ contains $\beta_d$.  The first $d$ letters of an
occurrence form an increasing $d$-subsequence ending at a $d$-trigger $c$,
and its last three letters $y_2,y_3,y_1$, in this order, satisfy
$c<y_1<y_2<y_3$.  Let $\sigma'$ be the prefix of $\sigma$ read just
before $y_3$.  It is legal and contains $c$ and $y_2$, so its least
$d$-trigger $q$ satisfies $q\leq c$.  Since $y_3>q$ is read legally from
$\sigma'$, it lies in the active head of $\mathcal S(\sigma')$ by
\cref{lem:legal-moves}(b).  Let $u$ and $v$ be the unread letters of
$\sigma'$ next below and next above $y_2$.  Then $q<y_1\leq u$ and
$v\leq y_3$, and $y_2$ was read after the $d$-trigger $c<u$, so $u$ and $v$
lie in different intervals of $\mathcal S(\sigma')$ by \cref{lem:separators}.
But the stack consists of the unread letters above $q$, its active head is
the lowest interval, and $u<v\leq y_3$, so $u$ and $v$ both lie in the
active head, a contradiction.

(b) By \cref{lem:legal-moves}(b), $x$ lies in a deferred interval, so
$s\geq2$.  Let $u=\max I_1$ and $v=\min I_2$, which are adjacent.  By
\cref{lem:separators}, some letter $z$ with $u<z<v$ was read after a
$d$-trigger $c<u$.  A permutation with prefix $\sigma x$ reads $u$ after
$x$, and $v\leq x$.  An increasing $d$-subsequence ending at $c$, followed
by $z$, $x$, $u$, is therefore an occurrence of $\beta_d$, since
$c<u<z<x$.

(c) Every prefix of an avoider is legal: the empty prefix is, and if a
prefix $\sigma'$ of an avoider $\pi$ is legal and the next letter $x$ of
$\pi$ were illegal, (b) would forbid the prefix $\sigma'x$ of $\pi$.
Conversely, a legal $\sigma$ has an avoiding completion: read the unread
letters smaller than $q$ in decreasing order, and then the intervals
$I_1,\ldots,I_s$ in order, each in decreasing order.  Each letter of the
first group is the largest unread letter below $q$, so it is a non-trigger
move or a merger with $E=\varnothing$, and the stack is unchanged.  Each
later letter is the greatest element of the active head, and therefore a
$d$-trigger above $q$, so reading it is an endpoint choice.  All moves are
legal, so the completed permutation is legal and avoids $\beta_d$ by (a).
The last sentence of (c) follows, with (a) for the legal permutations.
\end{proof}

Being a prefix of a $\beta_d$-avoiding permutation is stronger than
avoiding $\beta_d$.  The word $134$ avoids $1342$, but its only completion in $[4]$
is $1342$, and after $1,3$ the stack is $\{2\}\mid\{4\}$, so reading
$4$ is illegal.

\subsection{The one-threshold state}

For $d=1$, every letter is a trigger, so the least $d$-trigger $q$ of
\cref{def:scan-state} is the least letter $m$ read so far, virtual for the
empty prefix.  A new left-to-right minimum joins the unread values between
it and $m$ to the active head, and a letter of the active head shortens the
head at an endpoint choice and splits it at an interior choice.  By
\cref{lem:separators}, two adjacent unread letters $u<v$ above $m$ lie in
different intervals exactly when some letter between them was read at a time
when the minimum read so far was already below $u$, so the stack is
recovered from the scanned prefix alone.  Adjacency cannot be dropped:
after the prefix $3,5,1$ of a $1342$-avoiding permutation of $[7]$ the
stack is $\{2,4\}\mid\{6,7\}$, so $2$ and $6$ lie in different intervals,
although no letter between them was read after a letter smaller than $2$.

\begin{example}[A one-threshold scan]\label{ex:1342-scan}
We scan the whole permutation $\pi$ of \eqref{eq:running-example}, which can be
read down the second column of the table below.
The table lists, after each letter $\pi_i$, the type of move, the current
minimum $m$, the interval stack, and the symbol $W_p(L)$ of
\cref{sec:one-threshold}, where $p$ is the number of unread letters below
$m$, $L$ lists the interval sizes, and $W_p(L)$ is the number of
$1342$-avoiding completions of the prefix.  Every new minimum is a merger
in the sense of \cref{def:scan-state}(a).  The table writes ``new minimum, merger'' when
the new minimum joins values to the head or creates the stack and ``new
minimum'' alone when $E=\varnothing$.  It writes ``head exhausted'' for an
endpoint choice in a head of size one.
\begin{center}
\small
\setlength{\tabcolsep}{4.5pt}
\begin{tabular}{@{}rrlrll@{}}
\toprule
$i$ & $\pi_i$ & move & $m$ & interval stack & $W_p(L)$ \\
\midrule
$1$ & $9$ & new minimum, merger & $9$ & $\{10,\ldots,15\}$ & $W_{8}((6))$ \\
$2$ & $11$ & interior choice & $9$ & $\{10\}\mid\{12,\ldots,15\}$ & $W_{8}((1,4))$ \\
$3$ & $10$ & head exhausted & $9$ & $\{12,\ldots,15\}$ & $W_{8}((4))$ \\
$4$ & $14$ & interior choice & $9$ & $\{12,13\}\mid\{15\}$ & $W_{8}((2,1))$ \\
$5$ & $5$ & new minimum, merger & $5$ & $\{6,7,8,12,13\}\mid\{15\}$ & $W_{4}((5,1))$ \\
$6$ & $12$ & interior choice & $5$ & $\{6,7,8\}\mid\{13\}\mid\{15\}$ & $W_{4}((3,1,1))$ \\
$7$ & $6$ & endpoint choice & $5$ & $\{7,8\}\mid\{13\}\mid\{15\}$ & $W_{4}((2,1,1))$ \\
$8$ & $2$ & new minimum, merger & $2$ & $\{3,4,7,8\}\mid\{13\}\mid\{15\}$ & $W_{1}((4,1,1))$ \\
$9$ & $3$ & endpoint choice & $2$ & $\{4,7,8\}\mid\{13\}\mid\{15\}$ & $W_{1}((3,1,1))$ \\
$10$ & $8$ & endpoint choice & $2$ & $\{4,7\}\mid\{13\}\mid\{15\}$ & $W_{1}((2,1,1))$ \\
$11$ & $4$ & endpoint choice & $2$ & $\{7\}\mid\{13\}\mid\{15\}$ & $W_{1}((1,1,1))$ \\
$12$ & $7$ & head exhausted & $2$ & $\{13\}\mid\{15\}$ & $W_{1}((1,1))$ \\
$13$ & $1$ & new minimum & $1$ & $\{13\}\mid\{15\}$ & $W_{0}((1,1))$ \\
$14$ & $13$ & head exhausted & $1$ & $\{15\}$ & $W_{0}((1))$ \\
$15$ & $15$ & head exhausted & $1$ & $\varnothing$ & $W_{0}(\varnothing)$ \\
\bottomrule
\end{tabular}
\end{center}
The first five rows are \cref{ex:one-projection,ex:one-merger}.  In the
terms of \cref{lem:separators}, the letters $9$, $10$, and $11$
separate nothing at row $5$, as in \cref{ex:one-merger}: the separation
produced by $11$ at row $2$ disappeared once the interval below it was
exhausted, and the merger extended the head past it.  The letter $14$, read
after $9<13$, separates $13$ from $15$ from row $4$ on.  The interior choice $12$ then creates
three intervals, separated between $8$ and $13$ and between $13$ and $15$.
After the endpoint choice $6$, the new minimum $2$ joins $\{3,4\}$ to the head.  The
head $\{3,4,7,8\}$ contains the read letters $5$ and $6$ in its range,
again without a separation.  At row $8$ the unread letters above $m=2$ are
$3,4,7,8,13,15$, and the lemma cuts this list between $8$ and $13$, with
witness $12$ read after $5<8$, and between $13$ and $15$, with witness
$14$, and nowhere else.  This is the stack of the table.  The head is then
consumed in the $231$-avoiding order $3,8,4,7$.  The new minimum $1$ joins nothing to the head, and $13$, $15$ complete the
scan.  Throughout, the later minima $5$, $2$, and $1$ are read between
letters of the stack, and only sizes enter the state: the heads
$\{6,7,8,12,13\}$ of row $5$ and $\{3,4,7,8\}$ of row $8$ contribute to $L$
only their sizes $5$ and $4$.
\end{example}

\subsection{The one-threshold recurrence}\label{sec:one-threshold}

Recall that $m$ is the least letter in the scanned prefix, virtual for the
empty prefix.  Let $p$ be the number of unread letters below $m$, which we
call the \emph{control}, and let $L=(\ell_1,\ldots,\ell_s)$ be
the positive sizes of the active and deferred intervals.  We call $(p,L)$
the \emph{state} of the prefix.  We write
$|L|=\sum_{i=1}^s\ell_i$ and
$\operatorname{len}(L)=s$.  If $L'$ is a list, $(\ell)L'$ and
$(a,b)L'$ denote concatenation with the displayed initial entries, and
$\nz(x_1,\ldots,x_r)$ denotes the list obtained by deleting every zero
entry.  In the equations below, each summand stands for reading one letter
and moving from the current state to the state indexed by the summand.  We
call such a move a \emph{transition}.

\begin{proposition}[One-threshold recurrence]\label{prop:scalar-literal}
For $p\geq0$ and every list $L$ of positive integers, define $W_p(L)$ as
follows.  For $L=(\ell)L'$ with $\ell\geq1$,
\begin{align}
W_p((\ell)L')={}&\sum_{h=0}^{p-1}
 W_h\!\left((\ell+p-1-h)L'\right)
 +\sum_{\substack{a,b\geq0\\a+b=\ell-1}}W_p(\nz(a,b)L'), \label{eq:W}
\end{align}
and
\begin{equation}\label{eq:W-boundary}
 W_p(\varnothing)=
  \begin{cases}
   1,&p=0,\\
   W_{p-1}(\varnothing)+\sum_{h=0}^{p-2}W_h\!\left((p-1-h)\right),&p\geq1.
  \end{cases}
\end{equation}
For every $p$ and $L$, $W_p(L)$ is the number of maximal transition
sequences from $(p,L)$, all of which end at $(0,\varnothing)$.  For every
legal prefix with state $(p,L)$, reading the letters of a completion one at
a time is a bijection from the completions that produce a $1342$-avoiding
permutation to the maximal transition sequences from $(p,L)$.  In
particular
\begin{equation}\label{eq:W-initial}
 \lvert\Av_n(1342)\rvert=W_n(\varnothing).
\end{equation}
\end{proposition}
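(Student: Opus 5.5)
The plan is to read the recurrence off the legal moves of \cref{def:scan-state} with $d=1$, and then use \cref{prop:scan-states}(c) to pass from transition sequences to avoiders. First, the state is well defined and it determines its successors. The abstract state $(p,L)$ of a legal prefix fixes the multiset of successor states, one for each unread letter that is legal to read. By \cref{lem:legal-moves}(b), with $d=1$ (so non-trigger moves do not occur), the legal letters are exactly the unread letters below $m$ and the letters of the active head $I_1$.

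Next I compute the successor for each legal letter. Take the unread letter below $m$ that has exactly $h$ unread letters below it, for some $h\in\{0,\ldots,p-1\}$. Reading it is a merger. The new control is $h$, and $E$ consists of the $p-1-h$ unread letters between it and $m$. These join the head, so the state becomes $(h,(\ell+p-1-h)L')$; when $L=\varnothing$ the new head is $E$ alone. Now take the letter of $I_1$ of local rank $r$, and put $a=r-1$ and $b=\ell-r$. Reading it is a split: the control is unchanged, and the state becomes $(p,\nz(a,b)L')$. As $r$ runs over $1,\ldots,\ell$, the pairs $(a,b)$ run over all pairs with $a+b=\ell-1$. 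These two families of successors give \eqref{eq:W}. For $L=\varnothing$, only mergers are possible. The case $h=p-1$ has $E=\varnothing$ and, after deleting the zero, gives $W_{p-1}(\varnothing)$; the remaining $h$ give the sum in \eqref{eq:W-boundary}. The initial state $(n,\varnothing)$ gives \eqref{eq:W-initial}.

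Then I would show that every maximal transition sequence ends at $(0,\varnothing)$ and that $W_p(L)$ counts these sequences. Each transition lowers $p+|L|$ by exactly one, since it reads one unread letter; this also shows that the recursion is well founded, by induction on $p+|L|$. A state has no successor exactly when $p=0$ and $L=\varnothing$. Indeed, if $L\neq\varnothing$ the head is nonempty and a split is available, and if $p>0$ a merger is available. Counting maximal paths by the first step then reproduces the recurrences, with base value $W_0(\varnothing)=1$.

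Finally, the bijection. Given a legal prefix, reading a completion letter by letter passes through legal states exactly when the completed permutation is legal, and legal permutations are the avoiders by \cref{prop:scan-states}(c). The map from completions to sequences is injective, because distinct unread letters give distinct transitions from a given concrete state: each transition is labelled by $h$ or by $r$, and the label identifies the letter. It is surjective, because each abstract transition is realised by exactly one unread letter. The main obstacle I expect is the bookkeeping in the merger case, namely checking that the sizes transform exactly as claimed even though the head's elements need not be consecutive integers. The gap between $E$ and $I_1$ contains only read letters, so the two fuse into one interval, as \cref{lem:legal-moves}(a) guarantees. Here I also need to fix $m=n+1$ virtually, so that $p$ counts all unread letters when the prefix is empty.
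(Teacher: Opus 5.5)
Your proposal is correct and follows essentially the same route as the paper. It uses induction on $p+|L|$, with $(0,\varnothing)$ as the only state without transitions, to count maximal paths. It then classifies the legal letters for $d=1$ into mergers, indexed by $h$, and active-head splits, indexed by the local rank, and recovers each letter from its label. The only difference is cosmetic: you invoke \cref{prop:scan-states}(c) to identify avoiding completions with legal ones, whereas the paper inducts on the number of unread letters using parts (a) and (b) directly, which amounts to the same argument.
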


\begin{proof}
The quantity
\begin{equation}\label{eq:rho-scalar}
 \rho_1(p,L)=p+|L|
\end{equation}
decreases by one under every transition, and $(0,\varnothing)$ is the only
state without transitions.  Therefore the equations define $W_p(L)$, and by
induction on $\rho_1$ it is the number of maximal transition sequences from
$(p,L)$, all of which end at $(0,\varnothing)$.

By \cref{lem:legal-moves}(a), a legal prefix with state $(p,L)$ has
$\rho_1(p,L)$ unread letters, and we argue by induction on this number.  If
no letter is unread, the prefix is a permutation, it avoids $1342$ by
\cref{prop:scan-states}(a), and its state is $(0,\varnothing)$, from which the
only maximal sequence is empty.  Otherwise we partition the possible next
letters.  By \cref{prop:scan-states}(b), an illegal letter begins no
avoiding completion, and the legal letters are the new minima and the
letters of the active head.  If the next letter is below $m$, let $h$ be the
number of unread letters below it.  There is one letter for each
$0\leq h<p$.  The new minimum leaves $h$ unrestricted lower letters, while
the $p-1-h$ letters between it and $m$ join the active head.  This gives the
first sum in \eqref{eq:W}.  A letter of the active head of local rank $r$
replaces the head by its parts below and above it, of sizes $a=r-1$ and
$b=\ell-r$, so that $L$ becomes $\nz(a,b)L'$ by \cref{def:scan-state}(b).
This gives the last sum in \eqref{eq:W}, one letter for each pair $(a,b)$
with $a+b=\ell-1$.  With an empty
stack only a new minimum can be read, which gives \eqref{eq:W-boundary}.  In
every case the letter is recovered from its transition by its local rank,
and the induction hypothesis applies to the successor.  For the empty
prefix, $p=n$ and $L=\varnothing$, which is \eqref{eq:W-initial}.
\end{proof}

\begin{example}[All one-threshold moves]\label{ex:1342-state}
After the prefix $9,11,10,14,5,12$ of the running example
\eqref{eq:running-example}, row $6$ of the table in \cref{ex:1342-scan}, the
unread letters below the current minimum $5$ are $1,2,3,4$ and the stack is
$\{6,7,8\}\mid\{13\}\mid\{15\}$.  The state is $(p,L)=(4,(3,1,1))$, and
\eqref{eq:W} gives, with the two endpoint terms collected,
\begin{align*}
 W_4((3,1,1))={}&W_0((6,1,1))+W_1((5,1,1))+W_2((4,1,1))+W_3((3,1,1))\\
 &+2W_4((2,1,1))+W_4((1,1,1,1)).
\end{align*}
The first four terms correspond to reading $1$, $2$, $3$, or $4$, which
join $\{2,3,4\}$, $\{3,4\}$, $\{4\}$, or nothing to the active head.
The fifth term records the two endpoints $6,8$ of the active head, and
the last records its interior letter $7$.  Neither $13$ nor $15$ can be
read legally until the active head is empty.  The scan of
\cref{ex:1342-scan} continues with the endpoint $6$.
\end{example}

\begin{example}[Counting $\Av_5(1342)$ by the literal recurrence]
\label{ex:1342-literal-count}
By \eqref{eq:W-initial} and \eqref{eq:W-boundary},
\[
 \lvert\Av_5(1342)\rvert=W_5(\varnothing)
 =W_0((4))+W_1((3))+W_2((2))+W_3((1))+W_4(\varnothing),
\]
the five terms corresponding to the first letters $1,2,3,4,5$.  Expanding
by \eqref{eq:W} and \eqref{eq:W-boundary} reaches twenty distinct states in
all, listed here with their values, in increasing order of $\rho_1=p+|L|$:
\begin{center}
\small
\setlength{\tabcolsep}{5pt}
\begin{tabular}{@{}rl@{}}
\toprule
$\rho_1$ & values \\
\midrule
$0$ & $W_0(\varnothing)=1$ \\
$1$ & $W_0((1))=1$, $W_1(\varnothing)=1$ \\
$2$ & $W_0((1,1))=1$, $W_0((2))=2$, $W_1((1))=2$, $W_2(\varnothing)=2$ \\
$3$ & $W_0((1,2))=2$, $W_0((2,1))=2$, $W_0((3))=5$, $W_1((1,1))=3$,
      $W_1((2))=6$, $W_2((1))=6$, $W_3(\varnothing)=6$ \\
$4$ & $W_0((4))=14$, $W_1((3))=20$, $W_2((2))=23$, $W_3((1))=23$,
      $W_4(\varnothing)=23$ \\
$5$ & $W_5(\varnothing)=103$ \\
\bottomrule
\end{tabular}
\end{center}
For example,
\begin{align*}
 W_0((4))&=2W_0((3))+W_0((1,2))+W_0((2,1))=10+2+2=14,\\
 W_1((3))&=W_0((3))+2W_1((2))+W_1((1,1))=5+12+3=20,\\
 W_2((2))&=W_0((3))+W_1((2))+2W_2((1))=5+6+12=23,\\
 W_3((1))&=W_0((3))+W_1((2))+W_2((1))+W_3(\varnothing)=5+6+6+6=23,
\end{align*}
so that $W_5(\varnothing)=14+20+23+23+23=103$.  In the first line the terms
are the two endpoints and the interior choices $(a,b)=(1,2),(2,1)$, and in
the second the new minimum, which joins nothing to the head, the two
endpoints, and the interior choice $(1,1)$.  The values $W_0((\ell))$ are the
Catalan numbers $C_\ell=\frac{1}{\ell+1}\binom{2\ell}{\ell}$, as
\cref{lem:first-letter} predicts.  The number of distinct states grows exponentially with $n$
(\cref{prop:scalar-exponential} below).  \Cref{ex:scalar-kernel} recomputes
the same value from the kernel rows of the shrunk state space.
\end{example}

\subsection{The exponential literal state space}

The recurrence \eqref{eq:W} is finite for each $n$, but memoizing its
composition-valued argument does not give a polynomial bound.

\begin{proposition}\label{prop:scalar-exponential}
For $n\geq2$, the computation of $W_n(\varnothing)$ reaches at least
$F_{n-1}$ distinct composition arguments, where $F_{n-1}$ is the $(n-1)$st
Fibonacci number, $F_1=F_2=1$.
\end{proposition}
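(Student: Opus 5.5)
We exhibit many distinct arguments $L$ reached from $W_n(\varnothing)$ using only one merger followed by interior splits that peel parts off the top of the head. By \eqref{eq:W-boundary} with $p=n\geq2$, the term $h=0$ of the sum shows that $W_n(\varnothing)$ calls $W_0((n-1))$, and $n-1\geq1$. From now on the control stays at $p=0$, so only the second sum of \eqref{eq:W} is used.

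The basic move is the following. From a state $W_0((\ell)L')$ with $\ell\geq2$ and any $b$ with $1\leq b\leq \ell-2$, take the summand $(a,b)=(\ell-1-b,b)$ of the second sum in \eqref{eq:W}. Both parts are positive, so $\nz$ deletes nothing and the successor is $W_0((\ell-1-b,b)L')$. The effect is to shrink the head by $b+1$ and push a new part $b$ onto the front of the deferred tail.

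Iterating this move from $(n-1)$, choose parts $b_r,\ldots,b_1$ in turn. This reaches the argument
\[
  L=(j,b_1,b_2,\ldots,b_r), \qquad j=n-1-\sum_{i=1}^r(b_i+1),
\]
for every $r\geq0$ and every sequence of positive integers $b_i$ with $j\geq1$. Each step is legal, because when part $b_i$ is peeled the current head is $j+\sum_{k\leq i}(b_k+1)-1\geq b_i+2$, so the condition $b\leq\ell-2$ holds.

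The count of reached arguments goes as follows.
\begin{enumerate}
\item Put $k=\sum_i(b_i+1)$. Sequences $(b_1,\ldots,b_r)$ with a given $k$ correspond bijectively to compositions of $k$ into parts $b_i+1\geq2$.
\item The number of compositions of $k$ into parts at least $2$ is $F_{k-1}$ for $k\geq2$, by the standard recursion on the first part (or on whether the last part is $2$ or larger). For $k=0$ there is only the empty composition.
\item The constraint $j\geq1$ means $k\leq n-2$.
\item Distinct pairs $(j,(b_i))$ give distinct lists $L$. Hence the number of distinct arguments is at least
\[
  1+\sum_{k=2}^{n-2}F_{k-1}=1+(F_{n-1}-1)=F_{n-1},
\]
using $\sum_{i=1}^{m}F_i=F_{m+2}-1$. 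The leading $1$ is the list $(n-1)$ itself, and for $n\leq3$ the sum is empty, giving $1=F_{n-1}$.
\end{enumerate}

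The only point needing care is the legality check: each intermediate head must be at least $b+2$, so that the chosen summand really is an interior choice and $\nz$ deletes nothing. This is immediate from the bookkeeping above. Every other step is elementary.
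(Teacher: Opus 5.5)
Your proof is correct and is essentially the paper's. From $W_0((n-1))$ you reach, by interior splits, exactly the compositions with $|L|+\operatorname{len}(L)=n$; your top-peeling path is the paper's reversal $(a,b,\ldots)\mapsto(a+b+1,\ldots)$ run forwards. You only count them differently, via compositions of $k$ into parts at least $2$ and $\sum_{i\le m}F_i=F_{m+2}-1$, instead of $\sum_s\binom{n-s-1}{s-1}$.

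One bookkeeping slip: just before $b_i$ is peeled, the head is $j+\sum_{k\le i}(b_k+1)$, not that minus $1$. With the $-1$, your claimed bound $\geq b_i+2$ fails for $i=j=1$. The corrected value gives $a=j+\sum_{k<i}(b_k+1)\geq1$, which is what legality needs.
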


\begin{proof}
Choosing the letter $1$ at the initial state reaches $W_0((n-1))$.  Repeated
interior choices reach every positive composition $L$ satisfying
\begin{equation}\label{eq:scalar-fib-comps}
 |L|+\operatorname{len}(L)=n.
\end{equation}
Reversing an interior split replaces
$(a,b,\ldots)$ by $(a+b+1,\ldots)$, and repeated reversals return every such
composition to $(n-1)$.  If $L$ has $s$ parts, then $|L|=n-s$, so the number
of choices is $\binom{n-s-1}{s-1}$.  Summing over
$1\leq s\leq\lfloor n/2\rfloor$ gives
\[
 \sum_{s=1}^{\lfloor n/2\rfloor}\binom{n-s-1}{s-1}=F_{n-1}.\qedhere
\]
\end{proof}

Literal memoization of \eqref{eq:W} therefore has exponentially many
states.  The values of the ordered parts cannot be replaced by their total
and their number.  For example, when $p=0$, \eqref{eq:stack-form} gives
$W_0((1,3))=C_1C_3=5$ and $W_0((2,2))=C_2^2=4$.  The next construction removes the entire
composition from the memoization key while retaining its effect on completion
counts.

\subsection{Protected-tail factorization}

Before the general statement, consider the smallest nontrivial instance of
\eqref{eq:W}: one unread letter below the current minimum and a head of size
one, with an arbitrary tail $L'$.  Either the head letter is read first, which leaves the stack $L'$ at
control $1$, or the lower letter is read first and the head letter next,
which leaves $L'$ at control $0$.  Therefore
\[
  W_1((1)L')=W_1(L')+W_0(L').
\]
The two coefficients count the ways to reach the stack $L'$ at
controls $1$ and $0$, and they do not depend on $L'$.  Writing them as
$K_1(1,1)=K_1(1,0)=1$, the identity reads
$W_1((1)L')=\sum_{t\in\{0,1\}}K_1(1,t)W_t(L')$.  The theorem below shows
that such a factorization holds for every head size: the transitions before
the tail is exposed leave the tail unchanged and do not depend on it.

For a stack $UL$ we write $U\mid L$ when the suffix $L$ is the one under
consideration.  The marker is a notational device and not part of the
state.  A state is an \emph{exposure} of $L$ if its stack is exactly $L$,
that is, if it has the form $(t,\varnothing\mid L)$.  A \emph{path} is a
finite sequence of consecutive transitions.  For a nonempty list $U$, a path
from $(p,U\mid L)$ is \emph{stopped} if it ends at an exposure of $L$ and
contains no earlier one, and $K_U^L(p,t)$ denotes the number of stopped
paths from $(p,U\mid L)$ to $(t,\varnothing\mid L)$.  We write
$K_\ell^L=K_{(\ell)}^L$, and we let $K_\varnothing$ be the identity kernel
$\one_{p=t}$, which is $1$ if $p=t$ and $0$ otherwise.  Kernels are
multiplied as matrices, $(AB)(p,t)=\sum_{u\geq0}A(p,u)B(u,t)$.

\begin{theorem}[Protected-tail factorization]
\label{thm:protected-tail-principle}
For every nonempty list $U$ of positive integers, the number $K_U^L(p,t)$ is
independent of $L$.  We write $K_U(p,t)$ for it.  Then, for all lists $U$,
$V$ and $L$ of positive integers,
\begin{equation}\label{eq:scalar-factorization}
 W_p(UL)=\sum_{t\geq0}K_U(p,t)W_t(L)
 \qquad\hbox{and}\qquad K_{UV}=K_UK_V.
\end{equation}
In particular $K_{(\ell_1,\ldots,\ell_s)}=K_{\ell_1}\cdots K_{\ell_s}$, and
for $L=(\ell_1,\ldots,\ell_s)$
\begin{equation}\label{eq:abstract-factorization}
 W_p(L)=\sum_{t\geq0}(K_{\ell_1}K_{\ell_2}\cdots K_{\ell_s})(p,t)W_t(\varnothing).
\end{equation}
\end{theorem}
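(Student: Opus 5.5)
The proof rests on one structural observation about the transitions of \cref{prop:scalar-literal}. Every transition out of a state $(p,(\ell)L')$ with $\ell\geq1$ acts on $(\ell)L'$ only through its first part. A new minimum replaces $\ell$ by $\ell+p-1-h$. A head letter replaces $(\ell)$ by $\nz(a,b)$. In both cases the suffix $L'$ is copied unchanged, and the set of available transitions, together with their multiplicities, depends only on $p$ and $\ell$.

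\textbf{Step 1: independence of $L$.} Fix a nonempty $U$ and any $L$. By induction on the length of a path, I will show that a path from $(p,U\mid L)$ that has not yet reached an exposure of $L$ visits only states $(p',U'\mid L)$ with $U'$ nonempty. The reason is that each transition rewrites the first entry of $U'$ into zero, one or two entries and never touches $L$. The only way the prefix can become empty is for $U'=(1)$ to be consumed, and that step is exactly an exposure. Because the available transitions at $(p',U'\mid L)$ depend only on $(p',\text{first part of }U')$, the map $(p',U'\mid L)\mapsto(p',U'\mid L^*)$ sends stopped paths for $L$ bijectively to stopped paths for any other tail $L^*$, with the same endpoint controls. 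In particular this holds for $L^*=\varnothing$, where an exposure is a state with empty stack. Hence $K^L_U(p,t)=K^\varnothing_U(p,t)$.

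One subtlety needs care. When $L=\varnothing$, the boundary equation \eqref{eq:W-boundary} gives other transitions at empty stacks, but these occur only after the exposure, so they do not affect stopped paths. Also, $\rho_1$ strictly decreases along every path, so all path counts are finite and only $t\leq p+|U|$ contributes.

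\textbf{Step 2: the factorization of $W$.} By \cref{prop:scalar-literal}, $W_p(UL)$ counts maximal transition sequences from $(p,UL)$, and every such sequence ends at $(0,\varnothing)$. Suppose first that $U$ is nonempty. The prefix $U$ must be consumed before the sequence can end, so every maximal sequence passes through a first exposure of $L$. Cutting the sequence at that first exposure $(t,\varnothing\mid L)$ gives a bijection between maximal sequences and pairs consisting of a stopped path and a maximal sequence from $(t,L)$. Summing over $t$ gives $W_p(UL)=\sum_t K_U(p,t)W_t(L)$. The case $U=\varnothing$ holds trivially with the identity kernel.

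\textbf{Step 3: multiplicativity.} For $K_{UV}=K_UK_V$, I will use $L$-independence with tail $VL$. A stopped path from $(p,UV\mid L)$ must first reach an exposure of $VL$, and it does so before any exposure of $L$ is possible, since $V$ is still present. So I cut it at the first state whose stack is $VL$, say $(u,\varnothing\mid VL)$. The first piece is a stopped path counted by $K_U(p,u)$. The second piece runs from $(u,V\mid L)$ to the first exposure of $L$ and is counted by $K_V(u,t)$. This decomposition is bijective, which proves the product formula. The cases with $U$ or $V$ empty are trivial. Iterating the product formula gives $K_{(\ell_1,\dots,\ell_s)}=K_{\ell_1}\cdots K_{\ell_s}$, and \eqref{eq:abstract-factorization} follows from Step 2 with $L=\varnothing$.

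The main obstacle is Step 1: stating precisely that the dependence of the transition rule on the state factors through $(p,\ell_1)$, including the boundary behaviour when the head is exhausted. The cutting arguments in Steps 2 and 3 are routine first-passage decompositions once Step 1 is in place.
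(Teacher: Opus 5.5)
Your proposal is correct and follows the paper's proof essentially step for step. You prove that $K_U^L$ does not depend on $L$ by swapping the protected tail along stopped paths. You then cut maximal sequences from $(p,UL)$ at the first exposure of $L$, and stopped paths from $(p,UV\mid L)$ at the first exposure of $VL$. You also supply two details the paper leaves implicit, and both are correct: the prefix can only vanish by consuming a head of size one, and the empty-stack transitions of \eqref{eq:W-boundary} occur only after the exposure.
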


\begin{proof}
Both identities are trivial when $U$ is empty, and the second also when
$V$ is empty, so let $U$ and $V$ be nonempty.  Before $L$ is exposed, a state has the form
$(p',U'\mid L)$ with $U'\ne\varnothing$.  By \eqref{eq:W}, its transitions
change only the control and the blocks before the marker, and they and their
targets are the same for every choice of $L$.  Replacing $L$ by another list
therefore maps the stopped paths from $(p,U\mid L)$ bijectively onto those
for the other list, with the same controls, so $K_U^L(p,t)$ is independent
of $L$.  By \cref{prop:scalar-literal}, $W_p(UL)$ is the number of maximal
transition sequences from $(p,UL)$.  Each of them ends at $(0,\varnothing)$,
whose stack is empty, and changes only the blocks before the marker until
$L$ is exposed, so it passes through a first exposure
$(t,\varnothing\mid L)$ of $L$.  Cutting it there gives a stopped path
counted by $K_U(p,t)$ followed by a maximal sequence from $(t,L)$, counted
by $W_t(L)$, and concatenation is the inverse operation.  In the same way, a
stopped path from $(p,UV\mid L)$ leaves $VL$ unchanged until $U$ is used up,
so it passes through a first exposure $(u,\varnothing\mid VL)$ of $VL$, and cutting it there gives a stopped path
from $(p,U\mid VL)$ followed by one from $(u,V\mid L)$, so
$K_{UV}=K_UK_V$.  Taking $U=(\ell_1,\ldots,\ell_s)$ and $L=\varnothing$
in the first identity and applying the second repeatedly gives
\eqref{eq:abstract-factorization}.
\end{proof}

\begin{remark}[Pushdown systems and pop sequences]
\label{rem:pop-sequences}
The proof uses two properties of \eqref{eq:W}: before the marked tail is
exposed, the transitions depend only on the control and the blocks before
the marker and leave the tail unchanged, and every maximal transition
sequence exposes the tail.  They say that the recurrence is a pushdown
system.  The kernel $K_\ell(p,t)$ counts the runs that pop the block $\ell$
while passing from $p$ to $t$, and \cref{thm:protected-tail-principle} is the
counting form of the classical conversion of a pushdown automaton into a
context-free grammar, whose variable $[p\,\ell\,t]$ describes exactly those
runs, see Hopcroft, Motwani, and
Ullman~\cite[Theorem~6.14]{HopcroftMotwaniUllman2001}, and
Sipser~\cite[Lemma~2.27]{Sipser2013} for a variant with variables $A_{pq}$.  The same runs are the pop sequences
of the weighted pushdown systems of Reps, Schwoon, Jha, and
Melski~\cite{RepsSchwoonJhaMelski2005}, and
\eqref{eq:abstract-factorization} is the first-passage decomposition of
weighted lattice paths, see Flajolet and
Sedgewick~\cite[Sections~I.5.3 and~V.4]{FlajoletSedgewick2009}.  With
infinitely many controls the construction gives an infinite grammar, so the
Chomsky--Sch\"utzenberger theorem~\cite{ChomskySchutzenberger1963} does not
apply and no algebraicity follows, and what the factorization provides is a
memoization key of polynomial size (\cref{thm:main}) rather than a
generating function.  The same two properties hold for the recurrence of
general $d$, and \cref{sec:kernels} applies the proof there.
\end{remark}

\subsection{The scalar kernel algorithm}\label{sec:scalar-kernel}

Recall that the control $p$ is the number of unread letters below the
current minimum.  For a kernel entry $K_\ell(p,t)$ of
\cref{thm:protected-tail-principle} we call $p$ the \emph{source control}
and $t$ the \emph{terminal control}, and we call $K_\ell(p,\mathord\cdot)$ a
\emph{scalar kernel row}.  A transition that selects an unread letter below the current
minimum is a \emph{base move}.  A transition that selects a value from the
active head is an \emph{active-head move}.  Put $K_0=K_\varnothing$, the
identity kernel, and $G_p=W_p(\varnothing)$.

\begin{proposition}[Scalar kernel recurrences]
\label{prop:scalar-kernel-recurrence}
For $\ell\geq1$,
\begin{align}
K_\ell(p,t)={}&\sum_{h=0}^{p-1}K_{\ell+p-1-h}(h,t)
 +\sum_{\substack{a,b\geq0\\a+b=\ell-1}}
  \sum_{u\geq0}K_a(p,u)K_b(u,t). \label{eq:scalar-K}
\end{align}
The empty-stack values satisfy
\begin{equation}\label{eq:scalar-G}
 G_p=
 \begin{cases}
  1,&p=0,\\
  \sum_{h=0}^{p-1}\sum_{t\geq0}K_{p-1-h}(h,t)G_t,&p\geq1,
 \end{cases}
 \qquad \lvert\Av_n(1342)\rvert=G_n.
\end{equation}
\end{proposition}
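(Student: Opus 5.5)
The plan is to derive both recurrences by applying \cref{thm:protected-tail-principle} to the first transition out of a state. For \eqref{eq:scalar-K}, fix $\ell\geq1$ and compute $K_\ell(p,t)=K_{(\ell)}^L(p,t)$ with a convenient tail, say $L=\varnothing$; independence of $L$ makes the choice harmless. A stopped path from $(p,(\ell)\mid L)$ is nonempty, because the initial stack is not an exposure of $L$. So I partition stopped paths by their first transition, which by \eqref{eq:W} is either a base move or an active-head move.

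For the base moves, reading the new minimum with $h$ unread letters below it leads to $(h,(\ell+p-1-h)\mid L)$. Since $\ell+p-1-h\geq\ell\geq1$, this state is not an exposure. The remainder of the path is then precisely a stopped path from that state, and these are counted by $K_{\ell+p-1-h}(h,t)$. Summing over $0\leq h<p$ gives the first sum.

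For the active-head moves, a letter of local rank $a+1$ leads to $(p,\nz(a,b)\mid L)$ with $a+b=\ell-1$. I treat this by cases.
\begin{itemize}
\item If $a=b=0$, the target is already the exposure $(p,\varnothing\mid L)$. The path stops there and contributes $\one_{p=t}$, which agrees with $\sum_u K_0(p,u)K_0(u,t)$ because $K_0$ is the identity.
\item Otherwise the remainder is a stopped path from $(p,\nz(a,b)\mid L)$, counted by $K_{\nz(a,b)}(p,t)$. The multiplicativity $K_{UV}=K_UK_V$ of \cref{thm:protected-tail-principle}, together with $K_0=K_\varnothing=$ identity, rewrites this as $(K_aK_b)(p,t)$ whether or not $a$ or $b$ is zero.
\end{itemize}
The inner sum over $u$ is finite, because the control never increases along a path, so $K_a(p,u)=0$ for $u>p$.

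For \eqref{eq:scalar-G}, the case $G_0=W_0(\varnothing)=1$ is immediate. For $p\geq1$ I use \eqref{eq:W-boundary} and note that every term can be written uniformly as $W_h((p-1-h))$ for $0\leq h\leq p-1$, where the term $h=p-1$ is $W_{p-1}(\varnothing)$ read with an empty list. The first part of \eqref{eq:scalar-factorization}, taken with $U=(p-1-h)$ and $L=\varnothing$, then gives
\[
W_h((p-1-h))=\sum_t K_{p-1-h}(h,t)G_t,
\]
which also holds trivially when $U$ is empty. Finally, $\lvert\Av_n(1342)\rvert=G_n$ is \eqref{eq:W-initial}.

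The only delicate point is the bookkeeping of zero parts and of the stopping condition. I have to check that an active-head move that exhausts the head counts as reaching the exposure at once, with no double counting against the identity kernel. I also have to check that no base move can produce an exposure. Both follow from $\ell+p-1-h\geq1$ and from $K_0=\one$.
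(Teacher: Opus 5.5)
Your proof is correct and follows the paper's own argument. You partition the stopped paths by their first move: base moves give the first sum, and active-head moves give $K_{\nz(a,b)}=K_aK_b$ via \cref{thm:protected-tail-principle}. You then read the $G$ recurrence off \eqref{eq:W-boundary} through \eqref{eq:scalar-factorization}, with $K_0$ covering $h=p-1$. Your explicit checks that no base move produces an exposure and that $a=b=0$ contributes $\one_{p=t}$ simply spell out what the paper's phrase ``or the empty path when $\ell=1$'' leaves implicit.
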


\begin{proof}
We partition the stopped paths from $(p,(\ell)\mid L)$ by their first move.
A new minimum leaving $h$ lower letters gives the first sum in
\eqref{eq:scalar-K}.  A letter of the active head of local rank $r$ leaves
the stack $\nz(a,b)L$ with $a=r-1$ and $b=\ell-r$, so the rest of the path is
a stopped path from $(p,\nz(a,b)\mid L)$, or the empty path when
$\ell=1$.  These are counted by
$K_{\nz(a,b)}(p,t)=(K_aK_b)(p,t)$, by
\cref{thm:protected-tail-principle}, which gives the double sum.  Finally, the recurrence in \eqref{eq:scalar-G}
follows from \eqref{eq:W-boundary}: a new minimum leaving $h$ lower letters
creates a part of size $p-1-h$, to which \eqref{eq:scalar-factorization}
applies, and $K_0$ covers the case $h=p-1$.  The last equation is
\eqref{eq:W-initial}.
\end{proof}

\begin{lemma}[Exact scalar support]\label{lem:scalar-support}
For every $p\geq0$ and $\ell\geq1$, the support of the row
$K_\ell(p,\mathord\cdot)$, the set of $t$ with $K_\ell(p,t)\neq0$, is
\begin{equation}\label{eq:scalar-support}
 \supp K_\ell(p,\mathord\cdot)=\{0,1,\ldots,p\}.
\end{equation}
In addition, $K_\ell(p,p)=C_\ell$.
\end{lemma}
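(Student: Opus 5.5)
The argument has three parts: an upper bound on the support, a proof that every $t\le p$ occurs, and the diagonal value.

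\emph{Upper bound: $t\le p$.} The control never increases along a transition. Base moves replace $p$ by some $h<p$. Active-head moves leave $p$ unchanged. So every stopped path from $(p,(\ell)\mid L)$ ends at a control $t\le p$, and $\supp K_\ell(p,\cdot)\subseteq\{0,\ldots,p\}$.

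\emph{Lower bound: every $t\le p$ occurs.} I would exhibit one stopped path for each $t$. If $t=p$, read the head in decreasing order; these are endpoint choices, giving $\ell$ active-head moves that consume the head. If $t<p$, first make one base move to control $h=t$. This merges $p-1-t$ letters into the head, so the head has size $\ell+p-1-t\geq1$. Then consume the head by endpoint choices. The resulting path is stopped, since the tail is exposed only at its last step. Hence $K_\ell(p,t)\ge1$.

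An alternative is induction on $\ell$ via \eqref{eq:scalar-K}: the first sum contributes $K_{\ell+p-1-h}(h,t)\ge1$ with $h=t$, provided the support claim holds for smaller $p$. The explicit path is simpler and avoids any induction.

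\emph{Diagonal: $K_\ell(p,p)=C_\ell$.} A stopped path with $t=p$ contains no base move, because the control never increases and a base move would lower it permanently. So the path consists only of active-head moves, which change neither the control nor the set of letters below the minimum. These moves read the $\ell$ letters of the head in some order, and the stack evolves exactly as in \cref{lem:first-letter} applied to the head interval. The deferred blocks produced by splits sit above the marker for $L$ and are consumed before $L$ is exposed. Therefore such paths correspond bijectively to the $231$-avoiding orderings of an $\ell$-element interval, and there are $C_\ell$ of them.

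The bijection can also be obtained algebraically. Restricting \eqref{eq:scalar-K} to $t=p$, the first sum vanishes because $h<p$ forces $K(h,p)=0$ by the support bound. In the double sum only $u=p$ contributes, since $K_a(p,u)=0$ for $u>p$ and $K_b(u,p)=0$ for $u<p$. This leaves $K_\ell(p,p)=\sum_{a+b=\ell-1}K_a(p,p)K_b(p,p)$ with $K_0(p,p)=1$, which is the Catalan recurrence. This is consistent with $W_0((\ell))=C_\ell$ in \cref{ex:1342-literal-count}.

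The only point needing care is the monotonicity of the control, together with the observation that in the factorization $K_aK_b$ the intermediate control $u$ is squeezed to $p$. Both follow directly from \eqref{eq:W}.
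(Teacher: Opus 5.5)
Your proof is correct and follows the paper's argument essentially step for step. The upper bound comes from monotonicity of the control, every $t<p$ is reached by the explicit path (one base move to $h=t$, then endpoint choices until the head is exhausted), and $K_\ell(p,p)=C_\ell$ comes from the bijection, via \cref{lem:first-letter}, with $231$-avoiding orderings of the head. Your additional derivation of the diagonal from \eqref{eq:scalar-K} is also valid and does not appear in the paper: squeezing the intermediate control to $u=p$ gives the Catalan recurrence.
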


\begin{proof}
A base move strictly decreases the control, while an active-head move leaves it
unchanged.  Thus every terminal control is at most $p$.  A path ending at
$p$ uses no base move.  By \cref{lem:first-letter}, such paths are in
bijection with the $231$-avoiding permutations of the original
$\ell$-element interval, so there are $C_\ell$ of them.

For $t<p$, consider the path that first reads the unread letter below the
current minimum that has $t$ unread letters below it, which makes the
control $t$ and the active head of size $\ell+p-1-t$, and then reads
endpoints until the head is exhausted.  It makes no other base move and
exposes the marked suffix at control $t$.  Thus every $t\in\{0,\ldots,p\}$ occurs.
\end{proof}

We call $p+\ell$ the \emph{grade} of the kernel row $K_\ell(p,\mathord\cdot)$.

\begin{example}[From kernel rows to $\lvert\Av_5(1342)\rvert$]
\label{ex:scalar-kernel}
We write each row $K_\ell(p,\mathord\cdot)$ as the vector
$(K_\ell(p,0),\ldots,K_\ell(p,p))$, which by \cref{lem:scalar-support} lists
its whole support.  Equation \eqref{eq:scalar-K} gives
\[
 K_1(1,\mathord\cdot)=(1,1),\quad
 K_3(0,\mathord\cdot)=(5),\quad
 K_2(1,\mathord\cdot)=(4,2),\quad
 K_1(2,\mathord\cdot)=(3,1,1),
\]
for example
$K_1(2,\mathord\cdot)=K_2(0,\mathord\cdot)+K_1(1,\mathord\cdot)+\mathbf e_2
=(2)+(1,1)+\mathbf e_2=(3,1,1)$, where $\mathbf e_t(u)=\one_{u=t}$.  The
rows of grade $4$ follow:
\begin{align*}
 K_4(0,\mathord\cdot)&=2K_3(0,\mathord\cdot)
  +K_1(0,0)K_2(0,\mathord\cdot)+K_2(0,0)K_1(0,\mathord\cdot)=(10)+(2)+(2)=(14),\\
 K_3(1,\mathord\cdot)&=K_3(0,\mathord\cdot)+2K_2(1,\mathord\cdot)
  +\textstyle\sum_uK_1(1,u)K_1(u,\mathord\cdot)=(5)+(8,4)+(2,1)=(15,5),\\
 K_2(2,\mathord\cdot)&=K_3(0,\mathord\cdot)+K_2(1,\mathord\cdot)
  +2K_1(2,\mathord\cdot)=(5)+(4,2)+(6,2,2)=(15,4,2),\\
 K_1(3,\mathord\cdot)&=K_3(0,\mathord\cdot)+K_2(1,\mathord\cdot)
  +K_1(2,\mathord\cdot)+\mathbf e_3=(12,3,1,1).
\end{align*}
Since $(G_0,\ldots,G_4)=(1,1,2,6,23)$, \eqref{eq:scalar-G} gives
\begin{align*}
G_5
 &=14G_0+(15G_0+5G_1)+(15G_0+4G_1+2G_2)+(12G_0+3G_1+G_2+G_3)+G_4\\
 &=14+20+23+23+23=103,
\end{align*}
the five groups corresponding, in order, to $h=0,1,2,3,4$, the last through
the identity kernel $K_0$.  The five group sums are the five terms
$W_0((4)),\ldots,W_4(\varnothing)$ of \cref{ex:1342-literal-count}: each
kernel row compresses the subtree of the literal recurrence below one
first-letter choice into a vector indexed by the control at which the empty
tail is exposed.
\end{example}

\begin{theorem}[Polynomial enumeration of $1342$-avoiders]
\label{thm:scalar-algorithm}
For every $N\geq1$, the numbers
$\lvert\Av_n(1342)\rvert$ for $0\leq n\leq N$ can be computed using $O(N^5)$
integer arithmetic operations and $O(N^3)$ stored integers.  Every
integer has $O(N\log N)$ bits.  Therefore the bit complexity is
$O(N^5M(N\log N))$ and the bit storage is $O(N^4\log N)$, where $M(b)$ is
the cost of multiplying $b$-bit integers, as in \cref{thm:main}.
\end{theorem}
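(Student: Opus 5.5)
We compute the kernel rows of \cref{prop:scalar-kernel-recurrence} by increasing grade and then the values $G_0,\ldots,G_N$ from \eqref{eq:scalar-G}.

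\textbf{Ordering and table size.} Only rows with $p+\ell\leq N$ are needed. Along any path the number of unread letters $p+\ell+|L|$ decreases by one per move, and $\lvert\Av_n(1342)\rvert=G_n$ with $n\leq N$. By \cref{lem:scalar-support} each row $K_\ell(p,\mathord\cdot)$ is a vector of length $p+1$. There are $O(N^2)$ pairs $(p,\ell)$ and each row has $O(N)$ entries, so the storage is $O(N^3)$ integers.

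\textbf{Well-foundedness of \eqref{eq:scalar-K}.} Consider the terms on the right of \eqref{eq:scalar-K} for a row of grade $p+\ell$:
\begin{itemize}
\item a base-move term refers to $K_{\ell+p-1-h}(h,\mathord\cdot)$, of grade $p+\ell-1$;
\item a product term $K_a(p,u)K_b(u,t)$ has $a,b<\ell$ and $u\leq p$, so both factors have grade below $p+\ell$, except when $a=0$ or $b=0$.
\end{itemize}
In the case $a=0$ or $b=0$ the factor is the identity $K_0$. The term then reduces to $K_{\ell-1}(p,\mathord\cdot)$, of grade $p+\ell-1$. So grade order is a valid evaluation order.

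\textbf{Operation count.} This is the main point. For a fixed row $(p,\ell)$:
\begin{itemize}
\item The base-move sum costs $O(p)$ row additions of length $O(N)$, so $O(N^2)$ operations.
\item The product sum has $\ell$ pairs $(a,b)$. Each pair needs $(K_aK_b)(p,t)=\sum_{u\leq\min(p,t)}K_a(p,u)K_b(u,t)$ for all $t\leq p$, since by support $u\leq p$ and $t\leq u$. That is $O(N^2)$ operations per pair, hence $O(N^3)$ per row.
\end{itemize}
Over $O(N^2)$ rows this gives $O(N^5)$ operations. Computing $G_p$ from \eqref{eq:scalar-G} costs $O(p^2)$ each, so $O(N^3)$ in total, which is negligible. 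No row uses a product $K_aK_b$ with $a,b$ summing past $\ell-1$, so no extra precomputation is needed.

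\textbf{Bit sizes.} Every kernel entry is a nonnegative integer: it counts paths, by \cref{thm:protected-tail-principle}. Each such path is a word of at most $N$ moves, and each move is chosen among at most $N$ letters. So every entry is at most $N^N$ and has $O(N\log N)$ bits. Each $G_n\leq n!$ also has $O(N\log N)$ bits. Intermediate sums are bounded by the entries they compute, because all terms are nonnegative.

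\textbf{Bit complexity and storage.} Multiplying the operation count by $M(N\log N)$ gives bit complexity $O(N^5M(N\log N))$, and additions are dominated by this. Multiplying the integer storage by $O(N\log N)$ gives bit storage $O(N^4\log N)$.
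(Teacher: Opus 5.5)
Your proposal is correct and follows the paper's proof essentially step for step: grade order on the padded region $p+\ell\le N$, the support lemma giving $O(N)$ entries per row and $O(N)$ intermediate controls per split, hence $O(N^3)$ operations per row and $O(N^5)$ overall, and a path-counting bound combined with nonnegativity of all summands for the bit sizes. One slip needs fixing: the intermediate sum should run over $t\le u\le p$, not $u\le\min(p,t)$, since together with $t\le u$ the latter would leave only $u=t$; the $O(N^2)$ count per pair is unaffected.
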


\begin{proof}
We order the kernel rows by grade.  The row
$K_{\ell+p-1-h}(h,\mathord\cdot)$ in a base term of \eqref{eq:scalar-K} has
grade $p+\ell-1$.  In a split term, $p+a<p+\ell$.  If
$K_a(p,u)\neq0$, then $u\leq p$, by \cref{lem:scalar-support} when $a\geq1$
and since $K_0$ is the identity when $a=0$, so that $u+b\leq p+b<p+\ell$.
Thus increasing grade
is a topological order for all kernel dependencies: an order in which every
row comes after all the rows its recurrence uses, so that evaluating the rows
in this order never meets a row not yet computed.  Equation
\eqref{eq:scalar-G} is then evaluated in increasing $p$, because every
terminal control $t$ in its nonzero terms satisfies
$t\leq h<p$.

We compute every row in the \emph{padded region} $p\geq0$, $\ell\geq1$,
$p+\ell\leq N$: it is closed under the dependencies in the preceding
paragraph and contains every row with $\ell\geq1$ needed to evaluate
\eqref{eq:scalar-G} for $n\leq N$, together with rows that no such
evaluation reaches.  It contains
$\binom{N+1}{2}=O(N^2)$ source rows.  By
\cref{lem:scalar-support}, these rows have at most
\[
 \sum_{\ell=1}^N\sum_{p=0}^{N-\ell}(p+1)
 =\binom{N+2}{3}=O(N^3)
\]
nonzero entries in total.  For one source row, a sparse evaluation
of all split terms has $O(N)$ choices of $a$, $O(N)$ possible intermediate
controls $u$, and $O(N)$ terminal controls in the row being added.  It
therefore uses $O(N^3)$ operations per source row and $O(N^5)$ operations in
total.  The nonsplit kernel terms use $O(N^4)$ operations in total, and
\eqref{eq:scalar-G} uses $O(N^3)$.

For a scalar kernel entry, we take the marked suffix to be empty, by
\cref{thm:protected-tail-principle}.  Its initial number of unread letters is then
$p+\ell\leq N$.  An empty-stack computation also starts with at most $N$
unread letters.  This number decreases at every transition.  At each state,
the number of outgoing transitions is the number of legal next letters and
is at most $N$.  Thus each stored count is
at most $(N+1)^N$ and has $O(N\log N)$ bits.  Accounting for integer
multiplication and for the bit length of every stored entry gives the stated
bit bounds.  Every product and partial accumulator is a nonnegative
contribution to a final kernel or empty-stack value, so the same bound applies
to intermediate integers.
\end{proof}

B\'ona~\cite{Bona1997} found the algebraic generating function
\begin{equation}\label{eq:bona-1342}
 \sum_{n\geq0}\lvert\Av_n(1342)\rvert x^n
 =\frac{(1-8x)^{3/2}-8x^2+20x+1}{2(1+x)^3}.
\end{equation}
\Cref{thm:scalar-algorithm} is a separate transfer construction, and it
extends to the family: we replace $p$ by $d$ ordered band counts, while the
interval stack, its merger rule, and the protected-tail theorem remain
unchanged.

\section{From one threshold to many}
\label{sec:frontier}

We now consider the pattern $\beta_d$ for an arbitrary $d\geq1$, and replace
the current minimum of \cref{sec:obligations} by $d$ thresholds and the
bands of unread values below and between them.  After a
prefix has been read, let $b_j$ be the smallest final entry of an
increasing subsequence of length $j$ contained in that prefix, for
$1\leq j\leq d$.  When the prefix contains no such subsequence, we use the
virtual value $b_j=n+j$.  Then
\[
 b_1<b_2<\cdots<b_d,
\]
since deleting the last entry of an increasing $(j+1)$-subsequence ending at
a nonvirtual $b_{j+1}$ leaves one of length $j$ ending below $b_{j+1}$, and a
virtual $b_{j+1}=n+j+1$ exceeds $b_j\leq n+j$.
We call $b_1,\ldots,b_d$ the \emph{patience-sorting thresholds} of the
prefix, since they are the top cards of the first $d$ piles when the
prefix is dealt by patience sorting, see Aldous and
Diaconis~\cite{AldousDiaconis1999}.  Every nonvirtual threshold value
has already been read.  We call the unread values below $b_d$ the
\emph{base values}.  They fall into $d$ open bands, shown in \cref{fig:bands}.  Band $0$ lies below
$b_1$, while band $i$ lies between $b_i$ and $b_{i+1}$ for $1\leq i<d$.
We write $\mathbf p=(p_0,p_1,\ldots,p_{d-1})$ for the band sizes.  We again
call $\mathbf p$ the \emph{control}, write $\|\mathbf p\|_1=p_0+\cdots+p_{d-1}$
for its \emph{control mass}, and $\mathbf0$ for the zero control.  Let $B_0,\ldots,B_{d-1}$ be the corresponding unread blocks.
Together with the intervals of the stack, the value order is
\begin{equation}\label{eq:ordered-layout}
 B_0<b_1<B_1<b_2<\cdots<b_{d-1}<B_{d-1}<b_d
 <I_1<\cdots<I_s
\end{equation}
by \cref{prop:state-invariant}(a) below.  Empty
blocks are allowed.  As in the one-threshold case (\cref{sec:obligations}), a
base move selects a value from some $B_i$ and an active-head move
selects a value from $I_1$.  We call a base move from $B_i$ with $i<d-1$ an
\emph{early-band move} and a base move from $B_{d-1}$ a \emph{last-band
move}.  Initially
$\mathbf p=(n,0,\ldots,0)$.

\begin{figure}[tb]
\centering
\begin{minipage}[b]{0.41\textwidth}
\centering
\begin{tikzpicture}[x=0.36cm,y=0.44cm,>=stealth]
  \draw[->] (0,0) -- (12.5,0) node[below left] {\footnotesize position};
  \draw[->] (0,0) -- (0,14.4) node[above left] {\footnotesize value};
  \fill[darkgreen!25] (5,0.4) rectangle (12,2.1);
  \node at (8.5,1.25) {\footnotesize $B_0$, size $p_0$};
  \fill[darkgreen!25] (5,2.9) rectangle (12,4.6);
  \node at (8.5,3.75) {\footnotesize $B_1$, size $p_1$};
  \node at (8.5,5.85) {\footnotesize $\vdots$};
  \fill[darkgreen!25] (5,7.4) rectangle (12,9.1);
  \node at (8.5,8.25) {\footnotesize $B_{d-1}$, size $p_{d-1}$};
  \fill[pairblue!25] (5,9.9) rectangle (12,11.1);
  \node at (8.5,10.5) {\footnotesize $I_1$};
  \node at (8.5,11.75) {\footnotesize $\vdots$};
  \fill[tailorange!25] (5,12.2) rectangle (12,13.2);
  \node at (8.5,12.7) {\footnotesize $I_s$};
  \draw[thick,dashed] (0,9.5) -- (12,9.5) node[right] {\footnotesize $b_d$};
  \draw[thick,dashed] (0,7) -- (12,7) node[right] {\footnotesize $b_{d-1}$};
  \draw[thick,dashed] (0,5) -- (12,5) node[right] {\footnotesize $b_2$};
  \draw[thick,dashed] (0,2.5) -- (12,2.5) node[right] {\footnotesize $b_1$};
  \fill (1,9.5) circle (3.5pt);
  \fill (2,7) circle (3.5pt);
  \fill (3,5) circle (3.5pt);
  \fill (4,2.5) circle (3.5pt);
  \node[anchor=south,font=\footnotesize] at (2.5,13.3) {read};
  \node[anchor=south,font=\footnotesize] at (8.5,13.3) {unread};
\end{tikzpicture}
\end{minipage}%
\hfill
\begin{minipage}[b]{0.57\textwidth}
\centering
\begin{tikzpicture}[x=0.36cm,y=0.36cm,>=stealth]
  \draw[->] (0,0) -- (16.5,0) node[below left] {\footnotesize position};
  \draw[->] (0,0) -- (0,16.5) node[above left] {\footnotesize value};
  \foreach \y in {1,...,15} \draw (-0.15,\y) -- (0.15,\y);
  \foreach \x in {1,...,5} \draw (\x,-0.15) -- (\x,0.15);
  \fill[darkgreen!25] (6,0.5) rectangle (16,4.5);
  \node at (11,2.5) {\footnotesize $B_0=\{1,2,3,4\}$};
  \fill[darkgreen!25] (6,5.5) rectangle (16,8.5);
  \node at (11,7) {\footnotesize $B_1=\{6,7,8\}$};
  \fill[pairblue!25] (6,11.5) rectangle (16,13.5);
  \node at (11,12.5) {\footnotesize $I_1=\{12,13\}$};
  \fill[tailorange!25] (6,14.5) rectangle (16,15.5);
  \node at (11,15) {\footnotesize $I_2=\{15\}$};
  \draw[thick,dashed] (0,5) -- (16,5) node[right] {\footnotesize $b_1=5$};
  \draw[thick,dashed] (0,10) -- (16,10) node[right] {\footnotesize $b_2=10$};
  \fill (1,9) circle (3.5pt);  \node[below right=0pt] at (1,9) {\footnotesize $9$};
  \fill (2,11) circle (3.5pt); \node[above=1pt] at (2,11) {\footnotesize $11$};
  \fill (3,10) circle (3.5pt); \node[above right=0pt] at (3,10) {\footnotesize $10$};
  \fill (4,14) circle (3.5pt); \node[above=1pt] at (4,14) {\footnotesize $14$};
  \fill (5,5) circle (3.5pt);  \node[below right=0pt] at (5,5) {\footnotesize $5$};
\end{tikzpicture}
\end{minipage}
\caption{The bands and the stack.  Left, schematically: the thresholds
$b_1<\cdots<b_d$ (the only read letters drawn, at arbitrary positions),
the bands $B_0,\ldots,B_{d-1}$ of unread base values with sizes
$\mathbf p=(p_0,\ldots,p_{d-1})$, and the stack $I_1,\ldots,I_s$ above
$b_d$.  Right: the running example \eqref{eq:running-example} for $d=2$
after the prefix $9,11,10,14,5$, with $b_1=5$, $b_2=10$,
$\mathbf p=(4,3)$, and the stack $\{12,13\}\mid\{15\}$, as in row $5$ of the
table in \cref{ex:full-state}.}
\label{fig:bands}
\end{figure}

If the next base value $x$ lies in band $i\geq1$, then $x$ ends an increasing
subsequence of length $i+1$, since $b_i<x$.  For $i=0$, the letter $x$ is an
increasing subsequence of length one.  In either case, $x$ ends no increasing
subsequence of length $i+2$, since that would imply $b_{i+1}<x$, while
$x\in B_i$ gives $x<b_{i+1}$.  Thus only
$b_{i+1}$ changes.  A base value creates a new $d$-trigger exactly when it
lies in band $d-1$.  The first $d-1$ bands update one threshold without
creating an obligation.  The last band also creates a $231$-avoidance
obligation.

Since a $d$-trigger is the last entry of an increasing $d$-subsequence,
$b_d$ is the least $d$-trigger of the prefix when it is nonvirtual, and a
virtual $b_d=n+d$ means that the prefix has no $d$-trigger.  With $q$ as in
\cref{def:scan-state}, in both cases
\begin{equation}\label{eq:bd-q}
 b_d=q.
\end{equation}
In particular, the unread values below $b_d$ belong to the projection of no
trigger read so far, while every unread value above a nonvirtual $b_d$
belongs to the projection created by the trigger $b_d$.  In the terms of
\cref{def:scan-state}, a value of $B_i$ with $i<d-1$ is a non-trigger move,
a value of $B_{d-1}$ is a merger, and a value of $I_1$ is a split
(\cref{prop:state-invariant}(b) below).

For example, in the permutation $316829574$ the entries $3,6$ form an increasing $2$-subsequence, and the subsequent
projection onto values larger than $6$ is $8,9,7$, an occurrence of $231$.
Therefore
$3,6,8,9,7$ is an occurrence of $\beta_2=12453$.  The interspersed values
$2,5,4$, all smaller than $6$, do not belong to this projection.

\section{The literal family recurrence}\label{sec:literal}

In this section we write down the literal recurrence for $\beta_d$, show that
it counts every avoider once, characterize the separations of its stack, and
show that its state space is exponential.  We now express the band updates in terms of their sizes.  For $0\leq i<d-1$ and
$0\leq h<p_i$, let $x$ be the value of $B_i$ that has $h$ unread values of
$B_i$ below it.  Reading $x$ makes it the new threshold $b_{i+1}$, leaves
$h$ values in band $i$, and moves the $p_i-1-h$ values of $B_i$ above it to
band $i+1$, so the control becomes
\begin{equation}\label{eq:T}
 T_{i,h}(\mathbf p)=
 (p_0,\ldots,p_{i-1},h,
  p_{i+1}+p_i-1-h,p_{i+2},\ldots,p_{d-1}).
\end{equation}
Similarly, if $x$ is the value of $B_{d-1}$ that has $h$ unread values of
$B_{d-1}$ below it, reading $x$ gives the control
\begin{equation}\label{eq:U}
 U_h(\mathbf p)=(p_0,\ldots,p_{d-2},h),
 \qquad \delta_h=p_{d-1}-1-h.
\end{equation}
For $d=1$, interpret \eqref{eq:U} as $U_h((p_0))=(h)$.
Here $x$ is a new least $d$-trigger, and the $\delta_h$ unread
values above it and below the old $b_d$ join the active head.

For example, take $d=2$ and the prefix $9,11,10,14,5$ of the running example
\eqref{eq:running-example}, shown on the right of \cref{fig:bands}, with
$b_1=5$, $b_2=10$, and $\mathbf p=(4,3)$.  Reading $2$, which has $h=1$
unread value of $B_0=\{1,2,3,4\}$ below it, makes $2$ the new $b_1$, leaves
$\{1\}$ in band $0$, and moves $3,4$ into band $1$, so the control becomes
$T_{0,1}(\mathbf p)=(1,5)$.  Reading $7$ instead, which has $h=1$ unread
value of $B_1=\{6,7,8\}$ below it, makes $7$ the new $b_2$ and gives the
control $U_1(\mathbf p)=(4,1)$, and the $\delta_1=1$ value $8$ joins the
head, so the stack becomes $\{8,12,13\}\mid\{15\}$.  \Cref{ex:full-state}
follows the whole running example through such moves.

Let $L=(\ell_1,\ldots,\ell_s)$ be the sizes of the intervals of the stack,
active head first, and call $(\mathbf p,L)$ the \emph{state} of a legal
prefix (\cref{def:scan-state}).  For an empty stack
we interpret $\ell_1=0$ and its tail as empty.

\begin{proposition}[State invariant]\label{prop:state-invariant}
Let $\sigma$ be a legal prefix with thresholds $b_1<\cdots<b_d$, band sizes
$\mathbf p$, and stack $\mathcal S(\sigma)=(I_1,\ldots,I_s)$ with sizes
$L=(\ell_1,\ldots,\ell_s)$.
\begin{enumerate}
\item[(a)] The stack consists of the unread values above $b_d$, so the
  ordered layout \eqref{eq:ordered-layout} holds.
\item[(b)] The legal moves from $\sigma$, and their effect on
  $(\mathbf p,L)$, are the following.  A value of $B_i$ with $i<d-1$ having
  $h$ unread values of $B_i$ below it is a non-trigger move to
  $(T_{i,h}(\mathbf p),L)$.  A value of $B_{d-1}$ having $h$ unread values of
  $B_{d-1}$ below it is a merger to
  $(U_h(\mathbf p),\nz(\ell_1+\delta_h,\ell_2,\ldots))$.  A value of $I_1$
  of local rank $r$ is a split to
  $(\mathbf p,\nz(r-1,\ell_1-r,\ell_2,\ldots))$.  No other letter is legal.
\end{enumerate}
\end{proposition}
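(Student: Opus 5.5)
Part (a) follows from the definitions, and part (b) is a case analysis on where the next letter $x$ falls in the layout \eqref{eq:ordered-layout}.

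For (a), \cref{def:scan-state} says that the union of the stack is the set of unread letters larger than $q$. By \eqref{eq:bd-q} we have $q=b_d$, so the stack consists exactly of the unread values above $b_d$. The intervals are increasing by the definition of a scan state. Every unread value below $b_d$ lies in exactly one band $B_i$. It cannot equal a threshold, since nonvirtual thresholds are read. This gives \eqref{eq:ordered-layout}.

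For (b), I classify an unread $x$ by its position in the layout.

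\emph{Case $x\in B_i$ with $i<d-1$.} By the discussion preceding \eqref{eq:bd-q}, $x$ ends an increasing subsequence of length $i+1$ but none of length $i+2\le d$. So $x$ is not a $d$-trigger of $\sigma x$, the move is a non-trigger move (c), and the stack is unchanged. Only $b_{i+1}$ changes, becoming $x$. The values of $B_i$ below $x$ stay in band $i$, and the $p_i-1-h$ values above $x$ now lie between $b_{i+1}=x$ and $b_{i+2}$ (or $b_d$). The new control is therefore $T_{i,h}(\mathbf p)$ by \eqref{eq:T}.

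\emph{Case $x\in B_{d-1}$.} Here $x>b_{d-1}$, so $x$ is a $d$-trigger, and $x<b_d=q$, so the move is a merger (a). The set $E$ consists of the unread values strictly between $x$ and $b_d$, and it has $\delta_h$ elements. The head therefore becomes of size $\ell_1+\delta_h$, or is created when $s=0$; the $\nz$ handles both the empty-stack convention $\ell_1=0$ and the case $\delta_h=0$. The new $b_d$ is $x$, which leaves $h$ values in band $d-1$ and gives $U_h(\mathbf p)$.

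\emph{Case $x>b_d$.} By \cref{lem:legal-moves}(b), $x$ is legal exactly when $x\in I_1$. In that case it is a split (b), and the new stack is $\nz(r-1,\ell_1-r,\ell_2,\ldots)$. No threshold changes. Indeed, $b_1,\ldots,b_{d-1}<b_d<x$, so any increasing $j$-subsequence ending at $x$ cannot lower $b_j$, and $b_d$ is unchanged as in the proof of \cref{lem:legal-moves}(a). Hence $\mathbf p$ is unchanged.

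Every unread letter falls into one of these cases, and letters in deferred intervals are illegal by \cref{lem:legal-moves}(b). So no other letter is legal.

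The only delicate point is checking that the threshold bookkeeping matches: in the early-band case no $b_j$ other than $b_{i+1}$ moves, and in the last-band case nothing below $b_d$ other than band $d-1$ is affected. Both follow from the argument that $x\in B_i$ ends no increasing $(i+2)$-subsequence, together with the observation that $x$ lowers $b_{i+1}$ because $x<b_{i+1}$.
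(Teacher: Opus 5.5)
Your proposal is correct and follows the paper's proof essentially step for step. Part (a) comes from $q=b_d$ together with the scan-state property of the stack. Part (b) classifies the next letter by band or head, using the threshold bookkeeping before \eqref{eq:bd-q}, the count $|E|=\delta_h$, and \cref{lem:legal-moves}(b) for the deferred intervals. One small point: in (a), the fact that $\mathcal S(\sigma)$ is the stack of a scan state comes from \cref{lem:legal-moves}(a) applied along the legal moves, which is what the paper cites, not from \cref{def:scan-state} alone.
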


\begin{proof}
(a) Since $b_d=q$ by \eqref{eq:bd-q}, the stack consists of the unread
values above $b_d$ by \cref{lem:legal-moves}(a), while the unread values
below $b_d$ form the bands by definition.  This is \eqref{eq:ordered-layout}.

(b) We classify the unread letters by \cref{def:scan-state}.  As observed
before \eqref{eq:bd-q}, a letter $x\in B_i$ changes only $b_{i+1}$, and it
is a non-trigger move if $i<d-1$ and a merger if $i=d-1$, since then
$x<b_d=q$.  A letter of $I_1$ exceeds $b_d$, so it changes no threshold, and
it is a split, as noted after \cref{def:scan-state}.  The new controls are
given by \eqref{eq:T} and \eqref{eq:U}, the set $E$ of a merger has
$\delta_h$ elements, and the new stacks follow from \cref{def:scan-state}.
Every other unread letter lies in a deferred interval and is illegal by
\cref{lem:legal-moves}(b).
\end{proof}

For a control $\mathbf p$ and a list $L$ of positive integers with
$(\mathbf p,L)\ne(\mathbf0,\varnothing)$, we define
\begin{align}
H_{\mathbf p}(L)={}&
 \sum_{i=0}^{d-2}\sum_{h=0}^{p_i-1}
 H_{T_{i,h}(\mathbf p)}(L)                                           \label{eq:H}\\
&+\sum_{h=0}^{p_{d-1}-1}
 H_{U_h(\mathbf p)}\!\left(
   \nz(\ell_1+\delta_h,\ell_2,\ldots)\right)                       \notag\\
&+\sum_{\substack{a,b\geq0\\a+b=\ell_1-1}}
 H_{\mathbf p}\!\left(\nz(a,b,\ell_2,\ldots)\right).                \notag
\end{align}
The three lines represent, respectively, an early-band move, a last-band
move, and a letter of the active head, of local rank $a+1$, and as in
\cref{sec:one-threshold} each summand stands for a transition.  The first
double sum is empty when $d=1$, and the last line is empty when the stack
is empty.  The quantity
\begin{equation}\label{eq:rho}
 \rho(\mathbf p,L)=\|\mathbf p\|_1+\sum_{j=1}^{s}\ell_j
\end{equation}
drops by one on every transition.  An early-band move reduces the control
mass by one.  A last-band move reduces it by $\delta_h+1$ and adds
$\delta_h$ to the head.  An active-head move consumes one head value.
The zero state $(\mathbf0,\varnothing)$ is the only state without
transitions: a nonzero control allows a base move and a nonempty stack a
letter of the head.  Therefore \eqref{eq:H} and $H_{\mathbf0}(\varnothing)=1$ define
$H_{\mathbf p}(L)$, and by induction on $\rho$ it is the number of maximal
transition sequences from $(\mathbf p,L)$, all of which end at the zero
state.  Theorem~\ref{thm:literal} below shows that
\begin{equation}\label{eq:initial-terminal}
 a_n^{(d)}:=\lvert\Av_n(\beta_d)\rvert
 =H_{(n,0,\ldots,0)}(\varnothing).
\end{equation}
For $d=1$, identify $H_{(p)}(L)=W_p(L)$.  The early-band sum is empty and
$U_h((p))=(h)$, so \eqref{eq:H} and $H_{\mathbf0}(\varnothing)=1$ reduce to
\eqref{eq:W} and \eqref{eq:W-boundary}, \eqref{eq:rho} to
\eqref{eq:rho-scalar}, and \eqref{eq:initial-terminal} to
\eqref{eq:W-initial}.

\begin{example}[What the second threshold adds]\label{ex:full-state}
We specialize to $d=2$ and scan the running example
\eqref{eq:running-example} again, which can be read down the second column
of the table below.  It avoids $12453$, since $12453$ contains
$1342$.  The table lists, after each letter $\pi_i$, the type of move, the
thresholds $b_1,b_2$, the bands $B_0,B_1$, the interval stack, and
$H_{\mathbf p}(L)$, the number of $12453$-avoiding completions.  A dot marks
a virtual threshold.  Band $0$ is the early band and band $1$ the last band,
and in the sense of \cref{def:scan-state} an early-band move is a
non-trigger move and a last-band move is a merger.  As in \cref{ex:1342-scan}, the table
writes ``merger'' when the move joins values to the head or creates the
stack, so that ``last band'' alone marks the case $E=\varnothing$, and it
writes ``head exhausted'' for an endpoint choice in a head of size one.
\begin{center}
\footnotesize
\setlength{\tabcolsep}{3.5pt}
\begin{tabular}{@{}rrllllll@{}}
\toprule
$i$ & $\pi_i$ & move & $b_1,b_2$ & $B_0$ & $B_1$ & interval stack & $H_{\mathbf p}(L)$ \\
\midrule
$1$ & $9$ & early band & $9,\cdot$ & $\{1,\ldots,8\}$ & $\{10,\ldots,15\}$ & $\varnothing$ & $H_{(8,6)}(\varnothing)$ \\
$2$ & $11$ & last band, merger & $9,11$ & $\{1,\ldots,8\}$ & $\{10\}$ & $\{12,\ldots,15\}$ & $H_{(8,1)}((4))$ \\
$3$ & $10$ & last band & $9,10$ & $\{1,\ldots,8\}$ & $\varnothing$ & $\{12,\ldots,15\}$ & $H_{(8,0)}((4))$ \\
$4$ & $14$ & interior choice & $9,10$ & $\{1,\ldots,8\}$ & $\varnothing$ & $\{12,13\}\mid\{15\}$ & $H_{(8,0)}((2,1))$ \\
$5$ & $5$ & early band & $5,10$ & $\{1,\ldots,4\}$ & $\{6,7,8\}$ & $\{12,13\}\mid\{15\}$ & $H_{(4,3)}((2,1))$ \\
$6$ & $12$ & endpoint choice & $5,10$ & $\{1,\ldots,4\}$ & $\{6,7,8\}$ & $\{13\}\mid\{15\}$ & $H_{(4,3)}((1,1))$ \\
$7$ & $6$ & last band, merger & $5,6$ & $\{1,\ldots,4\}$ & $\varnothing$ & $\{7,8,13\}\mid\{15\}$ & $H_{(4,0)}((3,1))$ \\
$8$ & $2$ & early band & $2,6$ & $\{1\}$ & $\{3,4\}$ & $\{7,8,13\}\mid\{15\}$ & $H_{(1,2)}((3,1))$ \\
$9$ & $3$ & last band, merger & $2,3$ & $\{1\}$ & $\varnothing$ & $\{4,7,8,13\}\mid\{15\}$ & $H_{(1,0)}((4,1))$ \\
$10$ & $8$ & interior choice & $2,3$ & $\{1\}$ & $\varnothing$ & $\{4,7\}\mid\{13\}\mid\{15\}$ & $H_{(1,0)}((2,1,1))$ \\
$11$ & $4$ & endpoint choice & $2,3$ & $\{1\}$ & $\varnothing$ & $\{7\}\mid\{13\}\mid\{15\}$ & $H_{(1,0)}((1,1,1))$ \\
$12$ & $7$ & head exhausted & $2,3$ & $\{1\}$ & $\varnothing$ & $\{13\}\mid\{15\}$ & $H_{(1,0)}((1,1))$ \\
$13$ & $1$ & early band & $1,3$ & $\varnothing$ & $\varnothing$ & $\{13\}\mid\{15\}$ & $H_{(0,0)}((1,1))$ \\
$14$ & $13$ & head exhausted & $1,3$ & $\varnothing$ & $\varnothing$ & $\{15\}$ & $H_{(0,0)}((1))$ \\
$15$ & $15$ & head exhausted & $1,3$ & $\varnothing$ & $\varnothing$ & $\varnothing$ & $H_{(0,0)}(\varnothing)$ \\
\bottomrule
\end{tabular}
\end{center}
Compare with the one-threshold scan of \cref{ex:1342-scan}.  The first
letter $9$ is not a $2$-trigger and creates no obligation.  It becomes
$b_1$, and the unread values above it form band $1$.  The first
$2$-trigger is $11$, whose obligation constrains $\{12,\ldots,15\}$.  The
value $10$ remains a base value, and reading it next makes it the least
$2$-trigger without joining anything to the head.  The new minimum $5$, a merger for
$d=1$, is here an early-band move: it transfers $\{6,7,8\}$ from band $0$
to band $1$ and leaves the head $\{12,13\}$ unchanged.  It is the
$2$-trigger $6$, not the base value $5$, that joins $\{7,8\}$ to the
head.  The read letters $9$ to $12$ lie inside the range of the head
$\{7,8,13\}$, and by \cref{cor:separators} below none of them separates $8$
from $13$: neither $9$ nor $11$ was read after an earlier $2$-trigger smaller
than $8$, and $10$ and $12$ were read only after $2$-triggers exceeding $8$.
The letter $14$ separates $13$ from $15$, because it was read after the
$2$-trigger $10<13$.  The letter $8$, an endpoint
choice for $d=1$, is here an interior choice.

After the eighth letter, the thresholds are $b_1=2$ and $b_2=6$, the bands
are $B_0=\{1\}$ and $B_1=\{3,4\}$, and the stack is $\{7,8,13\}\mid\{15\}$.
The state is $(\mathbf p,L)=((1,2),(3,1))$.  Its next choices are:
\begin{center}
\small
\begin{tabular}{@{}c>{\raggedright\arraybackslash}p{6.5cm}l@{}}
\toprule
next & action & successor \\
\midrule
$1$ & update the first threshold & $H_{(0,2)}((3,1))$ \\
$3$ & new $2$-trigger, $\{4\}$ joins the head
    & $H_{(1,0)}((4,1))$ \\
$4$ & new $2$-trigger, no value is added to the head
    & $H_{(1,1)}((3,1))$ \\
$7$ or $13$ & endpoint choice in the active head
    & $H_{(1,2)}((2,1))$, twice \\
$8$ & interior choice with two singleton subintervals
    & $H_{(1,2)}((1,1,1))$ \\
$15$ & illegal until the head is exhausted
    & illegal \\
\bottomrule
\end{tabular}
\end{center}
Therefore
\begin{align*}
 H_{(1,2)}((3,1))={}&H_{(0,2)}((3,1))+H_{(1,0)}((4,1))
 +H_{(1,1)}((3,1))\\
 &+2H_{(1,2)}((2,1))+H_{(1,2)}((1,1,1)).
\end{align*}
The extra threshold is visible in the first row: the choice $1$ updates
$b_1$ without creating a $2$-trigger and, in this state, transfers no unread
value to the second band.
The values of an active head need not be read consecutively.  Base values may
occur between them.  The scan continues with $3$.
\end{example}

\begin{theorem}[Literal recurrence]\label{thm:literal}
Let $d\geq1$.  For every legal prefix with state $(\mathbf p,L)$, reading the
letters of a completion one at a time is a bijection from the completions
that produce a $\beta_d$-avoiding permutation to the maximal sequences of
transitions from $(\mathbf p,L)$, and $H_{\mathbf p}(L)$ is their number.  In
particular \eqref{eq:initial-terminal} holds.
\end{theorem}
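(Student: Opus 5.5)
The plan is to follow the proof of \cref{prop:scalar-literal} almost word for word, with \cref{prop:state-invariant} in place of the one-threshold description of legal moves. First, the text after \eqref{eq:H} already gives the counting half of the statement. The potential $\rho$ of \eqref{eq:rho} drops by one at every transition, and $(\mathbf0,\varnothing)$ is the only state without transitions, so induction on $\rho$ shows that $H_{\mathbf p}(L)$ counts the maximal transition sequences from $(\mathbf p,L)$. What remains is the bijection. We prove it by induction on the number of unread letters of the legal prefix. By \cref{prop:state-invariant}(a) and the layout \eqref{eq:ordered-layout}, this number equals $\rho(\mathbf p,L)$, since the unread letters are exactly the bands together with the stack.

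In the base case no letter is unread. Then the prefix is a permutation, it avoids $\beta_d$ by \cref{prop:scan-states}(a), and its state is $(\mathbf0,\varnothing)$, from which the only maximal sequence is the empty one. In the inductive step we sort the possible next letters $x$. If $x$ is illegal, \cref{prop:scan-states}(b) shows that no completion beginning with $x$ yields an avoider, so these letters contribute nothing. If $x$ is legal, \cref{prop:state-invariant}(b) names its transition and the successor state:
\begin{itemize}
\item a value of $B_i$ with $i<d-1$ and $h$ lower unread band values leads to $(T_{i,h}(\mathbf p),L)$;
\item a value of $B_{d-1}$ leads to $(U_h(\mathbf p),\nz(\ell_1+\delta_h,\ell_2,\ldots))$;
\item a value of $I_1$ of local rank $r=a+1$ leads to $(\mathbf p,\nz(a,b,\ell_2,\ldots))$.
\end{itemize}
These are exactly the summands of \eqref{eq:H}.

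The point that needs care is that this correspondence between legal letters and summands is bijective. We check it as follows:
\begin{itemize}
\item The blocks $B_0,\ldots,B_{d-1},I_1$ are pairwise disjoint.
\item Inside a band, $x$ is determined by $h$, and $h$ ranges over $0\leq h<p_i$.
\item Inside the head, $x$ is determined by its local rank, and the pairs $(a,b)$ with $a+b=\ell_1-1$ match the ranks $1,\ldots,\ell_1$ bijectively.
\end{itemize}
So each legal letter corresponds to exactly one summand, and the letter can be recovered from the summand's indices. Also, $\sigma x$ is again a legal prefix whose state is the successor, by \cref{lem:legal-moves}(a) and \cref{prop:state-invariant}. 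The completions of $\sigma$ that produce avoiders and begin with $x$ are exactly $x$ followed by the avoiding completions of $\sigma x$. The induction hypothesis applies to $\sigma x$, and prepending the transition finishes the bijection.

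For \eqref{eq:initial-terminal}, take the empty prefix. Its thresholds are all virtual, so every value lies in band $0$, the stack is empty, and the state is $((n,0,\ldots,0),\varnothing)$. Its avoiding completions are all of $\Av_n(\beta_d)$. The step I expect to be the main obstacle is making sure that \cref{prop:state-invariant}(b) really determines the successor state from the sizes $(\mathbf p,L)$ alone. The underlying facts are that $E$ has $\delta_h$ elements and sits just below $I_1$, and that the new band sizes are given by \eqref{eq:T} and \eqref{eq:U}. These are already established, so what is left is bookkeeping.
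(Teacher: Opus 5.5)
Your proposal is correct and follows essentially the same route as the paper: induction on the number $\rho(\mathbf p,L)$ of unread values, with \cref{prop:scan-states}(a) for the base case, \cref{prop:scan-states}(b) to discard illegal letters, and \cref{prop:state-invariant}(b) to match legal letters one-to-one with the summands of \eqref{eq:H}. Your explicit check that the correspondence between letters and summands is bijective simply spells out the paper's phrase ``one letter for each term.''
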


\begin{proof}
By \cref{prop:state-invariant}(a), the unread values of a legal prefix with
state $(\mathbf p,L)$ are the $\rho(\mathbf p,L)$ values of its bands and
its stack, and we argue by induction on this number, as in the proof of
\cref{prop:scalar-literal}.  If no value is unread, the prefix is a
permutation, it avoids $\beta_d$ by \cref{prop:scan-states}(a), and its
state is the zero state.  Otherwise, by \cref{prop:scan-states}(b) an
illegal letter begins no avoiding completion, and by
\cref{prop:state-invariant}(b) the legal letters and their successor states
are the terms of \eqref{eq:H}, one letter for each term.  The induction
hypothesis applies to the successors.  The empty prefix has the state
$((n,0,\ldots,0),\varnothing)$.
\end{proof}

For $d=2$, \eqref{eq:H} is the recurrence evaluated by Biers-Ariel's
program~\cite{BiersArielA116485}.  Its function
\path{Avoid12453_helper(i,j,L)} is $H_{(i-1,j-i-1)}(L)$, with the list
$[0]$ for the empty stack, and its four contributions to the output are, in
order, the early-band, last-band, endpoint and interior terms of
\eqref{eq:H}.

The stack can be read off the scanned prefix.  Recall that two unread
values are adjacent if no unread value lies strictly between them.  By
\cref{prop:scan-states}(c), the legal prefixes are exactly the prefixes of
$\beta_d$-avoiding permutations, and every legal prefix has an avoiding
completion, so condition (iii) below is not vacuous.

\begin{corollary}[Separations and completions]\label{cor:separators}
After a legal prefix has been read, let $u<v$ be adjacent unread values
above $b_d$.  The following are equivalent.
\begin{enumerate}
\item[(i)] $u$ and $v$ lie in different intervals of the stack.
\item[(ii)] Some letter $x$ with $u<x<v$ was read after a $d$-trigger
  smaller than $u$.
\item[(iii)] $u$ precedes $v$ in every $\beta_d$-avoiding completion.
\end{enumerate}
In particular, the stack is determined by the scanned prefix: it consists
of the unread values above $b_d$, cut exactly at the adjacent pairs that
satisfy (ii).
\end{corollary}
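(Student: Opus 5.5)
The equivalence (i)$\Leftrightarrow$(ii) is \cref{lem:separators}, once we note that $b_d=q$ by \eqref{eq:bd-q}. That lemma is stated for the least $d$-trigger $q$ of a legal word, and its hypothesis on $u<v$ is exactly adjacency of unread letters above $q$. So it suffices to prove (i)$\Leftrightarrow$(iii). The closing sentence then follows from (i)$\Leftrightarrow$(ii) together with \cref{prop:state-invariant}(a). The stack consists of the unread values above $b_d$, listed in increasing order, and its intervals are consecutive blocks of them. Hence the stack is recovered by cutting this list at those adjacent pairs lying in different intervals, which are exactly the pairs satisfying (ii).

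For (i)$\Rightarrow$(iii), suppose $u\in I_j$ and $v\in I_{j'}$ with $j<j'$. Since $u<v$ are adjacent and the intervals are ordered consecutive blocks, $j'=j+1$, $u=\max I_j$ and $v=\min I_{j+1}$. Take any $\beta_d$-avoiding completion. By \cref{prop:scan-states}(c), each of its prefixes is legal, so every letter of it is read by a legal move. We follow the stack until $v$ is read. Each legal move changes only the head, as listed in \cref{prop:state-invariant}(b), and leaves the deferred intervals untouched. In particular $v$, being the least element of a deferred interval, stays in a deferred interval as long as $u$ is unread. A merger only enlarges the head by base values below $b_d$. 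A split only cuts the head. So as long as $u$ is unread, $u$ lies in an interval strictly below the one containing $v$. By \cref{lem:legal-moves}(b), reading $v$ while $u$ is unread is therefore illegal. Hence $u$ precedes $v$. The step needing care here is the claim that $u$ and $v$ never fall into a common interval. I would prove it by induction on moves, with the invariant that the interval containing $v$ has index greater than the one containing $u$. Mergers add only values below $I_1$, and splits refine intervals, so the invariant is preserved.

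For (iii)$\Rightarrow$(i), argue by contraposition. Suppose $u$ and $v$ lie in the same interval $I_j$. I would build an avoiding completion in which $v$ precedes $u$, modifying the completion from the proof of \cref{prop:scan-states}(c). First read the base values in decreasing order. These are non-trigger moves or mergers with $E=\varnothing$, so the stack is unchanged. Then read $I_1,\ldots,I_{j-1}$, each in decreasing order, using endpoint choices. This makes $I_j$ the active head. Then read $I_j$ in decreasing order as well: every letter read is the maximum of the current head, hence an endpoint choice. Since $v>u$, the letter $v$ is read before $u$. Finish with the remaining intervals in decreasing order. All moves are legal, so the result avoids $\beta_d$ by \cref{prop:scan-states}(a), and in it $v$ precedes $u$, so (iii) fails.

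I expect no serious obstacle. The main point of care is the invariant in (i)$\Rightarrow$(iii).
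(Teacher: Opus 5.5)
Your proof is correct. Two of its three parts are exactly the paper's: (i)$\Leftrightarrow$(ii) via \cref{lem:separators} with $b_d=q$, and the contrapositive of (iii)$\Rightarrow$(i). The completion you describe there is the one from the proof of \cref{prop:scan-states}(c) itself, not a modification of it.

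You differ in the implication that yields (iii). The paper proves (ii)$\Rightarrow$(iii) in one line. If $v$ preceded $u$, then an increasing $d$-subsequence ending at the witnessing trigger $c$, followed by $x,v,u$, would be an occurrence of $\beta_d$, since $c<u<x<v$.

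You instead prove (i)$\Rightarrow$(iii) inside the scan machinery:
\begin{itemize}
\item every prefix of an avoider is legal by \cref{prop:scan-states}(c);
\item no legal move ever unites two existing intervals, because a merger enlarges the head only by values below it and a split refines the head;
\item so $v$ stays in a deferred interval while $u$ is unread, and reading it then is illegal by \cref{lem:legal-moves}(b).
\end{itemize}

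The paper's route is shorter and uses nothing beyond the definition of $\beta_d$ and the witness from (ii). Yours passes through \cref{prop:scan-states}(c), whose proof via part (b) already contains the same kind of occurrence argument, so it is less direct.

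In exchange, your invariant argument needs neither adjacency nor a witness. Together with the decreasing completion, it therefore gives (i)$\Leftrightarrow$(iii) for arbitrary unread $u<v$ above $b_d$. That is the extension the paper records in the remark following the corollary.
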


\begin{proof}
Since $b_d=q$ by \eqref{eq:bd-q}, (i) and (ii) are equivalent by
\cref{lem:separators}.  We prove that (ii) implies (iii) and that (iii)
implies (i).

Suppose (ii) holds, and let $c$ be the trigger.  If $v$ preceded $u$ in a
completion, an increasing $d$-subsequence ending at $c$, followed by
$x,v,u$, would form $\beta_d$, since $c<u<x<v$.  Therefore (iii) holds.

Suppose (i) fails, so that $u$ and $v$ lie in the same interval.  The
avoiding completion constructed in the proof of \cref{prop:scan-states}(c)
reads each interval in decreasing order, and so reads $v$ before $u$.
Therefore (iii) fails.
\end{proof}

Condition (ii) locates the separations, not the letters that produced
them: several letters between $u$ and $v$ may satisfy (ii), and the letter
whose split created the separation need not be among them.  Adjacency is
needed only in (ii), as the example before \cref{ex:1342-scan} shows.  For arbitrary unread $u<v$ above $b_d$,
conditions (i) and (iii) remain equivalent, and they hold exactly when some
pair $u'<v'$ of adjacent unread values with $u\leq u'<v'\leq v$ satisfies
(ii).

\begin{proposition}\label{prop:exponential}
For fixed $d\geq1$ and $n\geq d+1$, literal memoization of \eqref{eq:H} has
at least $F_{n-d}$ distinct composition arguments when computing
$a_n^{(d)}$, where $F_{n-d}$ is the $(n-d)$th Fibonacci number,
$F_1=F_2=1$.
\end{proposition}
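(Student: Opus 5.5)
The plan is to imitate the proof of \cref{prop:scalar-exponential}: drive the control to zero while creating a single active head of size $n-d$, and then realize every composition reachable from it by interior choices.

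\emph{Reaching a single head.} From the initial state $((n,0,\ldots,0),\varnothing)$, read the early-band values with $h=0$ in each of bands $0,1,\ldots,d-2$ in turn. The move $T_{0,0}$ sends $(n,0,\ldots,0)$ to $(0,n-1,0,\ldots,0)$, and then $T_{1,0}$ sends it to $(0,0,n-2,0,\ldots)$, and so on. After $d-1$ such moves the control is $(0,\ldots,0,n-d+1)$ and the stack is still empty. Concretely, this corresponds to reading $1,2,\ldots,d-1$. Now perform the last-band move with $h=0$, i.e.\ read the value $d$. It gives $U_0$, with control $\mathbf0$ and $\delta_0=n-d$, so by \eqref{eq:H} the state becomes $(\mathbf0,(n-d))$. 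This needs $n-d\geq1$, which is the hypothesis $n\geq d+1$. For $d=1$ this is exactly the first step of \cref{prop:scalar-exponential}.

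\emph{Enumerating compositions.} With control $\mathbf0$, the only available transitions are active-head moves, the third line of \eqref{eq:H}. They replace $(a+b+1)L'$ by $\nz(a,b)L'$, and they never change the control. An interior choice with $a,b\geq1$ therefore preserves the quantity $|L|+\operatorname{len}(L)$, so every reachable composition satisfies $|L|+\operatorname{len}(L)=n-d+1$. Conversely, every positive composition $L$ with $|L|+\operatorname{len}(L)=n-d+1$ is reachable. The argument is by induction on the number of parts: merge the first two parts $(a,b,\ldots)$ into $(a+b+1,\ldots)$, reach that composition by induction, and then split the head by the interior choice of local rank $a+1$. Here the head is always the first part, so the splits act in the right place.

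\emph{Counting.} It remains to count the compositions with $s$ parts and sum $|L|=n-d+1-s$. Their number is $\binom{n-d-s}{s-1}$, and the total is
\[
\sum_{s\geq1}\binom{n-d-s}{s-1}=F_{n-d},
\]
which is the identity already used in \cref{prop:scalar-exponential}, with $n$ replaced by $n-d+1$. All of these states are reached while memoizing the computation of $H_{(n,0,\ldots,0)}(\varnothing)$, and they are distinct as composition arguments. This gives the bound.

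The only care point is checking the band bookkeeping for $T_{i,0}$ and $U_0$ in the first step. The remaining steps copy the $d=1$ argument.
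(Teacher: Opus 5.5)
Your proof is correct and follows the paper's argument: read $1,\ldots,d$ to reach $(\mathbf 0,(n-d))$, then reuse the interior-choice construction of \cref{prop:scalar-exponential} with $n$ replaced by $n-d+1$. One small slip: the invariant $|L|+\operatorname{len}(L)=n-d+1$ holds only for compositions reached by interior choices alone, since endpoint choices at control $\mathbf 0$ lower it; the lower bound needs only that restricted family.
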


\begin{proof}
Reading $1,2,\ldots,d$ first reaches the state $(\mathbf0,(n-d))$: the
first $d-1$ letters are early-band moves, and $d$ is a last-band move whose
$n-d$ larger unread values form the head.  From there the interior choices
in the proof of \cref{prop:scalar-exponential}, with $n$ replaced by
$n-d+1$, reach $F_{n-d}$ distinct compositions.
\end{proof}

The literal state space is therefore exponential in $n$.
\Cref{sec:kernels} removes the stack from the memoization key.

\section{Protected tails and transfer kernels}\label{sec:kernels}

In this section we factor the literal recurrence through transfer kernels and
derive recurrences for the kernels and for the empty-stack values.  As
shown after \eqref{eq:rho}, $H_{\mathbf p}(L)$ is the number of maximal
transition sequences from $(\mathbf p,L)$, for every state.  We mark an arbitrary tail $L$ of the stack
and define exposures, stopped paths and the identity kernel $K_\varnothing$
as in \cref{sec:obligations}.  An early-band move changes only the control.
A last-band move can enlarge the active head, and a letter of the active
head replaces it by its parts below and above that letter.  Thus,
before $L$ is exposed, the transitions change only the control and the
blocks before the marker, and \eqref{eq:H} shows that they and their targets
are the same for every $L$.  Every maximal transition sequence ends at the
zero state, whose stack is empty.  These are the two properties used in the
proof of \cref{thm:protected-tail-principle}, and the same proof gives the
following corollary.  For a nonempty list $U$ we let
$K_U^L(\mathbf p,\mathbf t)$ be the number of stopped paths from
$(\mathbf p,U\mid L)$ to $(\mathbf t,\varnothing\mid L)$, and we write
$K_\ell^L=K_{(\ell)}^L$ and $K_0=K_\varnothing$.

\begin{corollary}[Protected-tail factorization for $\beta_d$]\label{cor:protected-tail}
The number $K_U^L(\mathbf p,\mathbf t)$ is independent of $L$.  We write
$K_U(\mathbf p,\mathbf t)$ for it.  Then, for all lists $U$, $V$ and $L$ of
positive integers,
\begin{equation}\label{eq:factorization}
 H_{\mathbf p}(UL)=
 \sum_{\mathbf t\in\N^d}K_U(\mathbf p,\mathbf t)H_{\mathbf t}(L)
 \qquad\hbox{and}\qquad K_{UV}=K_UK_V.
\end{equation}
In particular, for $L=(\ell_1,\ldots,\ell_s)$,
\begin{equation}\label{eq:matrix-product}
 H_{\mathbf p}(L)=\sum_{\mathbf t\in\N^d}
 (K_{\ell_1}K_{\ell_2}\cdots K_{\ell_s})(\mathbf p,\mathbf t)
 H_{\mathbf t}(\varnothing).
\end{equation}
\end{corollary}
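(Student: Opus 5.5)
The plan is to repeat the proof of \cref{thm:protected-tail-principle} word for word, with the scalar control $p$ replaced by the vector control $\mathbf p\in\N^d$ and \eqref{eq:W} replaced by \eqref{eq:H}. Two structural facts drive it. First, before a marked tail is exposed, every transition changes only the control and the blocks ahead of the marker. Second, every maximal sequence ends at the zero state. I would check both directly from the three lines of \eqref{eq:H}.

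An early-band move replaces $\mathbf p$ by $T_{i,h}(\mathbf p)$ and leaves the stack list untouched. A last-band move replaces $\mathbf p$ by $U_h(\mathbf p)$ and changes only $\ell_1$, to $\ell_1+\delta_h$. An active-head move replaces the first block by $\nz(a,b)$. So from a state $(\mathbf p',U'\mid L)$ with $U'\ne\varnothing$, the available transitions and their targets $(\mathbf p'',U''\mid L)$ depend only on $(\mathbf p',U')$. The one point needing care is the last-band move: it modifies the head, and the head must lie in $U'$ rather than in $L$. This holds precisely because $U'$ is nonempty before exposure.

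Independence of $L$ then follows by the same relabelling bijection. Replacing $L$ by $L'$ maps stopped paths from $(\mathbf p,U\mid L)$ onto stopped paths from $(\mathbf p,U\mid L')$, step by step, with the same controls. The marker stays exactly at the boundary because no transition touches $L$ until $U'$ becomes empty. This shows $K_U^L(\mathbf p,\mathbf t)$ does not depend on $L$.

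For the first identity in \eqref{eq:factorization}, I use the fact, established after \eqref{eq:rho}, that $H_{\mathbf p}(UL)$ counts maximal transition sequences ending at $(\mathbf0,\varnothing)$. Such a sequence has an empty stack at the end, so it must reach some first exposure $(\mathbf t,\varnothing\mid L)$. Cutting the sequence there gives a bijection with pairs consisting of a stopped path counted by $K_U(\mathbf p,\mathbf t)$ and a maximal sequence from $(\mathbf t,L)$. Summing over $\mathbf t$ gives the identity. The sum is finite, since $\rho$ strictly decreases along transitions, which bounds $\|\mathbf t\|_1$.

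For $K_{UV}=K_UK_V$, I take a stopped path from $(\mathbf p,UV\mid L)$. It leaves $VL$ intact until $U$ is consumed, and $U$ is consumed before $L$ is exposed, so the path passes through a first exposure $(\mathbf u,\varnothing\mid VL)$. Cutting there gives a stopped path for $U$ with tail $VL$, followed by a stopped path for $V$ with tail $L$. Concatenation inverts the cut, so the product formula follows. Finally, \eqref{eq:matrix-product} comes from taking $L=\varnothing$ in the first identity and iterating the product rule. I expect no real obstacle; the only point to verify carefully is the head-enlarging merger, as noted above.
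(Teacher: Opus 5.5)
Your proposal is correct and follows the paper's route. The paper checks from \eqref{eq:H} the same two properties: before the marked tail is exposed, transitions act only on the control and the blocks ahead of the marker, identically for every $L$; and every maximal sequence ends at the empty-stack zero state. It then reruns the cut-and-concatenate proof of \cref{thm:protected-tail-principle}, including your point that the head-enlarging last-band move touches only the nonempty prefix $U'$.
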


As in \cref{sec:scalar-kernel}, we call $K_\ell(\mathbf p,\mathord\cdot)$ a
\emph{kernel row}, with \emph{source control} $\mathbf p$ and
\emph{terminal controls} $\mathbf t$, and its support is the set of
$\mathbf t$ with $K_\ell(\mathbf p,\mathbf t)\ne0$.  The factorization
replaces the composition argument by a product of transfer kernels,
evaluated on the required source rows.  The recurrence for the kernel rows
follows, and \cref{sec:algorithm} bounds the number of source and terminal
controls required for the values $\lvert\Av_n(\beta_d)\rvert$ with $n\leq N$.

\begin{proposition}[Compressed kernel recurrence]
\label{prop:kernel-recurrence}
For $\ell\geq1$, the transfer kernels satisfy
\begin{align}
K_\ell(\mathbf p,\mathbf t)={}&
 \sum_{i=0}^{d-2}\sum_{h=0}^{p_i-1}
 K_\ell(T_{i,h}(\mathbf p),\mathbf t)                               \label{eq:K}\\
&+\sum_{h=0}^{p_{d-1}-1}
 K_{\ell+\delta_h}(U_h(\mathbf p),\mathbf t)                        \notag\\
&+\sum_{\substack{a,b\geq0\\a+b=\ell-1}}
  \sum_{\mathbf u\in\N^d}K_a(\mathbf p,\mathbf u)
  K_b(\mathbf u,\mathbf t).                                         \notag
\end{align}
\end{proposition}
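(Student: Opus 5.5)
The plan is to copy the proof of \cref{prop:scalar-kernel-recurrence}: partition the stopped paths counted by $K_\ell(\mathbf p,\mathbf t)=K^L_{(\ell)}(\mathbf p,\mathbf t)$ by their first transition, for an arbitrary marked tail $L$. \Cref{cor:protected-tail} says this count does not depend on $L$. Because $\ell\geq1$, the state $(\mathbf p,(\ell)\mid L)$ is not an exposure of $L$. Every stopped path from it is therefore nonempty, and its first move is one of the three kinds listed in \cref{prop:state-invariant}(b), which are exactly the three lines of \eqref{eq:H}.

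The first two kinds of move are handled directly.

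1. An early-band move from $B_i$ with $h$ values below it leads to $(T_{i,h}(\mathbf p),(\ell)\mid L)$. The head is unchanged, so this state is still not an exposure. The rest of the path is a stopped path from that state, and these are counted by $K_\ell(T_{i,h}(\mathbf p),\mathbf t)$.
2. A last-band move leads to $(U_h(\mathbf p),(\ell+\delta_h)\mid L)$. Here $\delta_h\geq0$ and the head has size $\ell+\delta_h\geq1$, so again no exposure occurs. This gives the term $K_{\ell+\delta_h}(U_h(\mathbf p),\mathbf t)$.

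In both cases, prefixing the first move is a bijection between the stopped paths of the successor and the stopped paths from $(\mathbf p,(\ell)\mid L)$ that begin with that move. This holds because the only requirement is that the path contain no exposure before its end, and the state after the first move is not an exposure.

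The third kind is an active-head move of local rank $a+1$, with $a+b=\ell-1$. It leads to $(\mathbf p,\nz(a,b)\mid L)$. If $a=b=0$, this state is already an exposure at control $\mathbf p$, and the remaining path is empty. That contributes $\one_{\mathbf t=\mathbf p}$, which agrees with $\sum_{\mathbf u}K_0(\mathbf p,\mathbf u)K_0(\mathbf u,\mathbf t)$ because $K_0$ is the identity. Otherwise the rest of the path is a stopped path from $(\mathbf p,\nz(a,b)\mid L)$. By \cref{cor:protected-tail} these are counted by $K_{\nz(a,b)}(\mathbf p,\mathbf t)=(K_aK_b)(\mathbf p,\mathbf t)$, and when one of $a,b$ is zero the identity factor $K_0$ absorbs the deleted entry. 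Summing over the three kinds of first move gives \eqref{eq:K}.

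The step that needs care is finiteness. The matrix product $\sum_{\mathbf u\in\N^d}$ must have only finitely many nonzero terms, and the recursion must be well founded. For the first point, every transition keeps $\rho$ nonincreasing; by \eqref{eq:rho}, $\rho$ drops by exactly one per transition. So a stopped path from $(\mathbf p,U\mid L)$ ends at a control $\mathbf u$ with $\|\mathbf u\|_1\leq\|\mathbf p\|_1+|U|$, and only finitely many $\mathbf u$ have $K_a(\mathbf p,\mathbf u)\neq0$.

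For well-foundedness, I would not need it as an ordering of the terms in \eqref{eq:K}. The identity is proved combinatorially, term by term, as a bijection on stopped paths, and each term is a finite nonnegative count. So no further ordering argument is needed for the statement itself. The grade ordering that makes \eqref{eq:K} computable is a separate matter, deferred to \cref{sec:algorithm}.
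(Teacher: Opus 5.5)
Your proposal is correct and follows the paper's proof. Both partition the stopped paths from $(\mathbf p,(\ell)\mid L)$ by their first move, read off the early-band and last-band sums directly, and obtain the split term from $K_{\nz(a,b)}=K_aK_b$ via \cref{cor:protected-tail}, with the identity kernel $K_0$ covering zero parts and the empty remainder when $\ell=1$. Your finiteness remark via $\rho$ is a harmless addition that the paper obtains later from \cref{lem:support}.
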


\begin{proof}
The first-move partition of the proof of \cref{prop:scalar-kernel-recurrence}
applies, with \cref{cor:protected-tail} in place of
\cref{thm:protected-tail-principle}.  Early-band moves give the first sum,
and a last-band move enlarges the head by $\delta_h$.
\end{proof}

We now connect the kernels to an empty stack.  Put
$G_{\mathbf p}=H_{\mathbf p}(\varnothing)$.

\begin{corollary}[Empty-stack recurrence]\label{cor:empty-stack-recurrence}
The empty-stack values satisfy
\begin{equation}\label{eq:G}
G_{\mathbf p}=
\begin{cases}
 1,&\mathbf p=\mathbf0,\\[2pt]
 \displaystyle\sum_{i=0}^{d-2}\sum_{h=0}^{p_i-1}G_{T_{i,h}(\mathbf p)}
 +\sum_{h=0}^{p_{d-1}-1}\sum_{\mathbf t\in\N^d}
 K_{\delta_h}(U_h(\mathbf p),\mathbf t)G_{\mathbf t},&\mathbf p\ne\mathbf0,
\end{cases}
\end{equation}
and
\begin{equation}\label{eq:answer-G}
 a_n^{(d)}=G_{(n,0,\ldots,0)}.
\end{equation}
\end{corollary}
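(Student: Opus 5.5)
The plan is to imitate the proof of \eqref{eq:scalar-G} in \cref{prop:scalar-kernel-recurrence}, using the literal recurrence \eqref{eq:H} with an empty stack and then \cref{cor:protected-tail}.

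First, the case $\mathbf p=\mathbf0$. The zero state has no transitions, and $H_{\mathbf0}(\varnothing)=1$ is the base case used to define \eqref{eq:H}, so $G_{\mathbf0}=1$.

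Now let $\mathbf p\ne\mathbf0$ and $L=\varnothing$. In \eqref{eq:H} the active-head line is empty, since there is no head to read from. Each early-band summand is $H_{T_{i,h}(\mathbf p)}(\varnothing)=G_{T_{i,h}(\mathbf p)}$, which gives the first double sum of \eqref{eq:G}.

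For a last-band summand, the convention $\ell_1=0$ for the empty stack gives the new stack $\nz(\delta_h)$. This is the one-part list $(\delta_h)$ when $\delta_h\geq1$ and the empty list when $\delta_h=0$.
\begin{itemize}
\item If $\delta_h\geq1$, apply \eqref{eq:factorization} with $U=(\delta_h)$ and $L=\varnothing$:
\[
H_{U_h(\mathbf p)}((\delta_h))=\sum_{\mathbf t}K_{\delta_h}(U_h(\mathbf p),\mathbf t)\,G_{\mathbf t}.
\]
\item If $\delta_h=0$, the convention $K_0=K_\varnothing=\one_{\mathbf p=\mathbf t}$ makes the same sum equal to $G_{U_h(\mathbf p)}$, which is the correct value.
\end{itemize}
This gives the second double sum of \eqref{eq:G}.

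The sums over $\mathbf t\in\N^d$ are finite. Every stopped path strictly decreases $\rho$, so $K_{\delta_h}(U_h(\mathbf p),\cdot)$ has finite support. Alternatively, \eqref{eq:G} can be read purely as an identity of counts, where no finiteness issue arises.

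Finally, \eqref{eq:answer-G} is \eqref{eq:initial-terminal} from \cref{thm:literal}, rewritten with $G_{(n,0,\ldots,0)}=H_{(n,0,\ldots,0)}(\varnothing)$.

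The argument is routine. The only point needing care is the boundary convention: the empty stack is read as $\ell_1=0$, and $\delta_h=0$ must be handled through the identity kernel $K_0$.
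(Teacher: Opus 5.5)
Your proof is correct and follows the paper's own argument. You specialize \eqref{eq:H} to the empty stack, apply \eqref{eq:factorization} with $U=(\delta_h)$ and $L=\varnothing$ to each last-band term (with $K_0$ covering $\delta_h=0$), and read off \eqref{eq:answer-G} from \eqref{eq:initial-terminal}; your remarks on the case $\mathbf p=\mathbf0$ and on the finiteness of the sums over $\mathbf t$ are harmless additions.
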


\begin{proof}
This follows from \eqref{eq:H} with an empty stack.  A last-band move creates
a head of size $\delta_h$, to which \eqref{eq:factorization} applies, and
$K_0$ covers the case $\delta_h=0$.  The last equation is
\eqref{eq:initial-terminal}.
\end{proof}

\section{The family algorithm and its complexity}\label{sec:algorithm}

In this section we bound the supports of the kernel rows and count the
operations, stored integers and bit sizes of the resulting algorithm, which
proves \cref{thm:main}.  As in \cref{sec:scalar-kernel}, we give a kernel source
$(\mathbf p,\ell)$ the grade
\[
 w=\|\mathbf p\|_1+\ell.
\]
We verify below that increasing grade is a topological order for the kernel
dependencies in \eqref{eq:K}, and that increasing control mass orders the
empty-stack recurrence \eqref{eq:G}.  The split term is controlled by the
following support bound.

\begin{lemma}[Kernel support]\label{lem:support}
For every $\mathbf p\in\N^d$ and $\ell\geq1$,
\begin{equation}\label{eq:support}
 \supp K_\ell(\mathbf p,\mathord\cdot)
 \subseteq
 \{\mathbf p\}\cup
 \{\mathbf t:\|\mathbf t\|_1\leq\|\mathbf p\|_1-1\}.
\end{equation}
\end{lemma}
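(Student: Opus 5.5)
The plan is to argue directly from the path interpretation of $K_\ell(\mathbf p,\mathbf t)$ given by \cref{cor:protected-tail}: it counts stopped paths from $(\mathbf p,(\ell)\mid L)$ to $(\mathbf t,\varnothing\mid L)$. It therefore suffices to track how the control mass $\|\mathbf p\|_1$ evolves along a single such path. The aim is to show that each step either leaves the control unchanged or strictly lowers its mass.

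The first step is to read off the effect of each transition type from \cref{prop:state-invariant}(b), or equivalently from \eqref{eq:T} and \eqref{eq:U}.
\begin{itemize}
\item An early-band move $T_{i,h}$ preserves all coordinates except $p_i,p_{i+1}$. It replaces them by $h$ and $p_{i+1}+p_i-1-h$, so the mass drops by exactly one.
\item A last-band move $U_h$ replaces $p_{d-1}$ by $h\le p_{d-1}-1$, so the mass drops by $\delta_h+1\ge1$.
\item An active-head move leaves the control unchanged.
\end{itemize}
This is the same bookkeeping already used for $\rho$ after \eqref{eq:H}. Consequently the mass is nonincreasing along every path, and it is constant exactly on paths consisting only of active-head moves. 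No other transition types arise before the marked tail is exposed.

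Now take a stopped path counted by $K_\ell(\mathbf p,\mathbf t)$. Either it contains at least one base move, or it contains none. In the first case its terminal control satisfies $\|\mathbf t\|_1\le\|\mathbf p\|_1-1$. In the second case every move is an active-head move, so the control never changes and $\mathbf t=\mathbf p$. Hence any $\mathbf t$ with $K_\ell(\mathbf p,\mathbf t)\neq0$ lies in the set on the right of \eqref{eq:support}.

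Alternatively, I could prove the bound by induction on grade using \eqref{eq:K}, since the three lines correspond to the three move types and the split line composes two kernels of smaller head size. The path argument avoids having to order that induction, so I would use it. I do not expect a real obstacle. The only point needing care is confirming that no transition can increase the mass. A last-band move does enlarge the active head by $\delta_h$, but those values leave the control rather than enter it, so the mass still falls.
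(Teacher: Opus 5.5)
Your proposal is correct and is essentially the paper's own proof, which also uses the stopped-path interpretation (as in \cref{lem:scalar-support}) and observes that every base move strictly lowers the control mass while active-head moves leave the control unchanged. A stopped path therefore either uses no base move and ends at $\mathbf p$, or ends at a control of mass at most $\|\mathbf p\|_1-1$.
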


\begin{proof}
As in the proof of \cref{lem:scalar-support}, a base move strictly lowers
the control mass and an active-head move leaves the control unchanged.
\end{proof}

In the algorithm below, each row operation is coefficientwise, and each row
is accumulated from zero.

\emph{Kernel phase.}
For $w=1,2,\ldots,N$, and every $\mathbf p\in\N^d$, $\ell\geq1$ with
$\|\mathbf p\|_1+\ell=w$, evaluate \eqref{eq:K} coefficientwise and store
the resulting sparse row.  In each split term, for every $a,b\geq0$ with
$a+b=\ell-1$ and
$\mathbf u\in\supp K_a(\mathbf p,\mathord\cdot)$, add the scalar multiple
$K_a(\mathbf p,\mathbf u)K_b(\mathbf u,\mathord\cdot)$ to the row being
computed.

\emph{Empty-stack phase.}
Set $G_{\mathbf0}=1$.  For $m=1,2,\ldots,N$, and every
$\mathbf p\in\N^d$ with $\|\mathbf p\|_1=m$, evaluate \eqref{eq:G}, using
$K_0$ when $\delta_h=0$.  Return
$(G_{(n,0,\ldots,0)})_{0\leq n\leq N}$.

As in the proof of \cref{thm:scalar-algorithm}, with \cref{lem:support} in
place of \cref{lem:scalar-support}, increasing $w$ is a topological order
for the kernel phase and increasing $m$ for the empty-stack phase.  The new
terms are the early-band rows of \eqref{eq:K}, of grade $w-1$ since
$\|T_{i,h}(\mathbf p)\|_1=\|\mathbf p\|_1-1$, and the last-band rows, of
grade $w-1$ since
\[
 \|U_h(\mathbf p)\|_1+(\ell+\delta_h)
 =\|\mathbf p\|_1-(\delta_h+1)+\ell+\delta_h=w-1.
\]
In \eqref{eq:G}, the early-band controls have mass $m-1$, and by
\cref{lem:support} every terminal control $\mathbf t$ of
$K_{\delta_h}(U_h(\mathbf p),\mathord\cdot)$ has
$\|\mathbf t\|_1\leq\|U_h(\mathbf p)\|_1=m-1-\delta_h$, trivially when
$\delta_h=0$.
The kernel phase computes the rows of the \emph{padded region}
$\|\mathbf p\|_1+\ell\leq N$, $\ell\geq1$, which by stars and bars has
\begin{equation}\label{eq:row-count}
 \binom{N+d}{d+1}=O_d(N^{d+1})
\end{equation}
rows.  By \cref{lem:support}, a row has at most
$1+\binom{\|\mathbf p\|_1+d-1}{d}=O_d(N^d)$ possible terminal controls.

\begin{proof}[Proof of \cref{thm:main}]
Correctness follows from \cref{thm:literal,cor:protected-tail,prop:kernel-recurrence,cor:empty-stack-recurrence}:
the algorithm evaluates \eqref{eq:K} and \eqref{eq:G} in the preceding
topological orders and returns the values \eqref{eq:answer-G}.  We now bound the resources.

There are $O_d(N^{d+1})$ source rows and $O_d(N^d)$ possible entries per
row, giving $O_d(N^{2d+1})$ stored integers.  In a split term there
are $O(N)$ choices of $a$, $O_d(N^d)$ intermediate controls $\mathbf u$,
and $O_d(N^d)$ entries in the row added for each $\mathbf u$.  Thus a
sparse implementation uses $O_d(N^{2d+1})$ operations per source row and
$O_d(N^{3d+2})$ over all kernels.  The nonsplit part uses $O_d(N)$ row
additions per source and therefore $O_d(N^{2d+2})$ operations in total.  For
each of the $O_d(N^d)$ empty-stack controls,
\eqref{eq:G} sums at most $O(N)$ choices over $O_d(N^d)$ terminal controls,
so the $G$ table costs
$O_d(N^{2d+1})$.  For $d\geq1$, the exponents $2d+2$ and $2d+1$ are both
smaller than $3d+2$.

By the argument in the last paragraph of the proof of
\cref{thm:scalar-algorithm}, where a state now has at most
$\rho(\mathbf p,L)\leq N$ unread values and as many legal next letters, every
stored
integer and every intermediate product and partial sum is at most
$(N+1)^N$ and has $O(N\log N)$ bits.  Replacing integer multiplication by its
$M(N\log N)$ bit cost proves the stated bit complexity.  The bit storage is
$O_d(N^{2d+2}\log N)$.
\end{proof}

\section{The two-threshold quotient for
\texorpdfstring{$12453$}{12453}}\label{sec:d2-support}

In this section we specialize to $d=2$ and write $\mathbf p=(p,q)$, so
that $q$ is the size of band~$1$, not the least $2$-trigger of
\eqref{eq:bd-q}.  Two symmetries of the kernels are
available in this case, and they serve different purposes.  The first is a
translation in the first control coordinate: a kernel entry depends on the
first coordinates of its source and terminal controls only through their
difference, so one coordinate can be dropped from the table.  This is the
symmetry that improves the $d=2$ specialization $O(N^8)$ and $O(N^5)$
of \cref{thm:main} to the $O(N^7)$ and $O(N^4)$ of \cref{cor:d2-complexity}.
The second is a partial translation in the second control coordinate.  It leaves
those exponents unchanged and halves the leading constant in the storage count.
We first determine the support of a kernel row exactly, which refines the
bound of \cref{lem:support}.

\begin{lemma}[Exact two-threshold support]\label{lem:d2-exact-support}
For every $\ell\geq1$,
\begin{equation}\label{eq:support-d2}
 \supp K_\ell((p,q),\mathord\cdot)
 =\{(p,q)\}\cup
 \{(u,v):0\leq u\leq p,\ v\geq0,\ u+v\leq p+q-1\}.
\end{equation}
\end{lemma}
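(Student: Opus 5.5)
We prove the two inclusions separately, following the pattern of \cref{lem:scalar-support}.

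For ``$\subseteq$'', we track the two coordinates along a stopped path from $((p,q),(\ell)\mid L)$. An active-head move leaves $(p,q)$ unchanged. An early-band move $T_{0,h}$ sends $(p',q')$ to $(h,q'+p'-1-h)$ with $h<p'$, so the first coordinate strictly decreases and the mass drops by one. A last-band move $U_h$ sends $(p',q')$ to $(p',h)$ with $h<q'$, so the first coordinate is unchanged and the mass drops by $\delta_h+1\geq1$. Hence the first coordinate never increases, so $u\leq p$. By \cref{lem:support}, either $\mathbf t=(p,q)$ or $u+v\leq p+q-1$. This is the right-hand side of \eqref{eq:support-d2}.

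For ``$\supseteq$'', we exhibit, for each target, an explicit stopped path, so that the count is positive. The target $(p,q)$ is reached by reading the head of size $\ell$ in decreasing order, which uses endpoint choices only. For a target $(u,v)$ with $u\leq p$ and $u+v\leq p+q-1$, we split into two cases.

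\emph{Case $u<p$.} First read the early-band value with $h=u$ unread values of $B_0$ below it. This gives the control $(u,q+p-1-u)$. Since $v\leq q+p-1-u$, we have second coordinate $v'=q+p-1-u\geq v$. If $v'>v$, read the last-band value with $h=v$, giving control $(u,v)$; this merges $\delta_h$ values into the head, which is harmless. If $v'=v$, do nothing. Finally read the whole active head by endpoint choices, which exposes $L$ at $(u,v)$ without any further base moves.

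\emph{Case $u=p$.} Then $v\leq q-1$, so $q\geq1$, and a single last-band move with $h=v$ reaches $(p,v)$. We then exhaust the head by endpoint choices.

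In every case the head is nonempty until the final endpoint choices, because $\ell\geq1$ and mergers only enlarge it. So the marked suffix is exposed for the first time at the end, and the path is stopped.

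The main point of care is the ``$\supseteq$'' direction. We must check that no intermediate state exposes $L$ prematurely; this holds because the head is never emptied before the final phase. We must also check that the early-band move really lands on a second coordinate at least $v$, which is exactly the inequality $u+v\leq p+q-1$. Everything else is bookkeeping with \eqref{eq:T} and \eqref{eq:U}.
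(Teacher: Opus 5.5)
Your proof is correct and follows the paper's argument essentially step for step. The inclusion comes from the first coordinate never increasing together with \cref{lem:support}. The reverse inclusion uses the same explicit stopped paths: an early-band move with $h=u$, then a last-band move with $h=v$ if needed, or that last-band move alone when $u=p$, followed by endpoint choices. You also check that the head stays nonempty until the final endpoint phase, so that the path is genuinely stopped; the paper leaves this implicit.
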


\begin{proof}
A base move never increases the first coordinate and strictly lowers the
coordinate sum, which proves the inclusion.  The terminal control $(p,q)$ is
reached by using no base move and reading endpoints until the head is
exhausted.  To reach a control $(u,v)$ in the second set with $u<p$, first
make the early-band move with $h=u$, which gives $(u,Q)$ with
$Q=p+q-1-u$.  The support inequality says $v\leq Q$.  If $v=Q$, make no
further base move.  If $v<Q$, make the last-band move with $h=v$.  If
$u=p$, then $v\leq q-1$, and the last-band move with $h=v$ can be made
directly.  Finally, exhaust the active head by endpoint choices, which
exposes the marked suffix at $(u,v)$.
\end{proof}

We now prove the first translation.

\begin{lemma}[First-coordinate translation]\label{lem:d2-translation}
For $\ell\geq1$ and $0\leq c\leq p$,
\begin{equation}\label{eq:d2-translation}
 K_\ell((p,q),(c,s))=K_\ell((p-c,q),(0,s)).
\end{equation}
\end{lemma}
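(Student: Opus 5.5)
We prove \eqref{eq:d2-translation} with a bijection on stopped paths. The idea is that the $c$ lowest values of band~$0$ never matter for a path that ends with exactly $c$ values left in band~$0$.

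Fix $\ell\geq1$ and a source $(p,q)$. The first coordinate never increases along a path. An early-band move (with $d=2$, $i=0$) sends $(p',q')$ to $(h,q'+p'-1-h)$. A last-band move and an active-head move leave the first coordinate unchanged. So a stopped path counted by $K_\ell((p,q),(c,s))$ visits only controls with first coordinate at least $c$. Every early-band move on it has $h\geq c$, since otherwise the first coordinate would drop below $c$ for good.

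Define a map on such paths by subtracting $c$ from the first coordinate of every visited state and from the parameter $h$ of every early-band move. The second coordinate and the stack are left alone. This is well defined because of the following checks.

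\begin{itemize}
\item For an early-band move, $T_{0,h}(p',q')=(h,q'+p'-1-h)$ becomes $T_{0,h-c}(p'-c,q')=(h-c,\,q'+(p'-c)-1-(h-c))$. The second coordinate is the same as before, as required.
\item Last-band moves $U_h$ and the increments $\delta_h=q'-1-h$ depend only on the second coordinate, so they are unchanged.
\item Active-head moves depend only on the stack, so they are unchanged.
\item The conditions that make a move available are $0\leq h<p'$ for early-band moves, $0\leq h<q'$ for last-band moves, and a nonempty head. After the shift these become $0\leq h-c<p'-c$ and the same last two conditions. The first is equivalent to $c\leq h<p'$, which we showed always holds.
\end{itemize}

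The shifted path starts at $(p-c,q)$ with stack $(\ell)\mid L$ and ends at $(0,s)$ with stack $\varnothing\mid L$. It has its first exposure at the same step as the original, so it is again stopped.

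For the inverse, take any stopped path from $(p-c,q)$ to $(0,s)$ and add $c$ to the first coordinate of every state and to every early-band parameter $h$. The availability condition $0\leq h<p'$ becomes $c\leq h+c<p'+c$, which always holds. So every path lifts, and the two maps are mutually inverse. This gives the equality of counts. By \cref{cor:protected-tail} the kernel does not depend on $L$, so we may take $L$ empty.

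The one real difficulty is the claim that early-band moves keep $h\geq c$, i.e., that a path to terminal first coordinate $c$ can never take $h<c$. This follows from monotonicity of the first coordinate. That monotonicity is the same fact already used in \cref{lem:d2-exact-support}, namely that a base move never increases the first coordinate.
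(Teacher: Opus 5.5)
Your proposal is correct and follows the paper's proof essentially step for step. Because the first coordinate never increases, every early-band parameter satisfies $h\geq c$, and lowering all first coordinates and early-band parameters by $c$, with raising them as the inverse, is a bijection of stopped paths that preserves stacks and hence the step of first exposure. Your explicit check of the availability condition $c\leq h<p'$ is a detail the paper leaves implicit. The appeal to \cref{cor:protected-tail} to take $L$ empty is harmless but unnecessary, since the bijection works for any fixed tail.
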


\begin{proof}
No transition increases the first control coordinate: an early-band move
with parameter $h$ lowers it from $p'$ to $h$, and the other moves keep it.
Every state of a stopped path from $((p,q),(\ell)\mid L)$ to
$((c,s),\varnothing\mid L)$ therefore has first coordinate at least $c$, and
every early-band move on it has parameter $h\geq c$.  Lower every first
coordinate and every early-band parameter on the path by $c$.  By
\eqref{eq:T}, an early-band move from $(p',q')$ with parameter $h$ becomes
the move from $(p'-c,q')$ with parameter $h-c$, whose target
$(h-c,q'+p'-1-h)$ is the old target with its first coordinate lowered by
$c$.  By \eqref{eq:U}, a last-band move keeps its parameter, the second
coordinate of its target and its head enlargement, and active-head moves do
not involve the control.  The result is a stopped path from
$((p-c,q),(\ell)\mid L)$ to $((0,s),\varnothing\mid L)$, and raising every
first coordinate and every early-band parameter by $c$ is the inverse map.
\end{proof}

The proof uses only two facts: no transition increases the first control
coordinate, and, by \eqref{eq:T} and \eqref{eq:U}, if the first coordinate
of the source of a move is lowered by $c$, and its parameter $h$ also when
the move reads a value of $B_0$, then the target changes only in its first
coordinate, which is lowered by $c$, and the head enlargement is unchanged.
Both facts hold for every $d$, so that
\[
 K_\ell(\mathbf p,\mathbf t)
 =K_\ell(\mathbf p-t_0\mathbf e_0,\mathbf t-t_0\mathbf e_0)
 \qquad\hbox{whenever }t_0\leq p_0,
\]
where $\mathbf e_0=(1,0,\ldots,0)$.  Storing only the entries with $t_0=0$
and counting as in the proofs of \cref{thm:main,cor:d2-complexity} gives
$O_d(N^{3d+1})$ operations and $O_d(N^{2d})$ stored integers, that is,
$O(N^4)$ and $O(N^2)$ for $1342$, and the bounds of \cref{cor:d2-complexity}
for $12453$.  We have stated \cref{thm:main} in the simpler form.

We write
\begin{equation}\label{eq:R-definition}
 R_{\ell,a}(q,s):=K_\ell((a,q),(0,s)).
\end{equation}
We call $R_{\ell,a}(q,s)$ a \emph{reduced entry} and the table of all such
entries the \emph{first-coordinate quotient}.  Throughout this section the
letter $a$ denotes a reduced first control coordinate, not a block size as in
\eqref{eq:W} and \eqref{eq:K}.  Block sizes in a split are written
$\ell_1,\ell_2$.  By \cref{lem:d2-translation}, every kernel entry is
represented by one such reduced entry, with $a=p-c$ (entries with $c>p$
vanish by \cref{lem:d2-exact-support}).  By \cref{lem:d2-exact-support}, for a reduced source
$(\ell,a,q)$ with $a=0$ the possible terminal coordinates are
$0\leq s\leq q$, and if $a>0$ a base move is necessary and
$0\leq s\leq a+q-1$.  The first-coordinate quotient therefore has
\begin{equation}\label{eq:d2-first-reduced-counts}
 \binom{N+2}{3}\quad\hbox{rows},
 \qquad
 \frac{N(N+1)(N^2+N+4)}{12}\quad\hbox{entries}.
\end{equation}
The entry formula follows by summing $q+1$ for $a=0$ and $a+q$ for $a>0$
over the same padded source region.  In particular, it already gives
$O(N^4)$ storage.  We count the operations in the proof of
\cref{cor:d2-complexity}.

We now rewrite the kernel recurrence in the reduced variables by applying
\cref{lem:d2-translation} to \eqref{eq:K}.  An early-band move from $(a,q)$
gives $R_{\ell,h}(a+q-h-1,s)$, and a last-band move with parameter $r$,
written $r$ here because $h$ indexes the early-band sum, gives
$R_{\ell+q-r-1,a}(r,s)$.  In a split with intermediate control $(c,m)$,
the first factor $K_{\ell_1}((a,q),(c,m))$ vanishes unless $c\leq a$, since
both parts of the support \eqref{eq:support-d2} have first coordinate at
most $a$.  For $0\leq c\leq a$
the two factors become
\[
 R_{\ell_1,a-c}(q,m)R_{\ell_2,c}(m,s).
\]
The split terms of \eqref{eq:K} with a factor $K_0$ contribute
$\one_{\mathbf p=\mathbf t}$ when $\ell=1$ and $2K_{\ell-1}(\mathbf p,\mathbf t)$
when $\ell\geq2$, and we collect them into a term $D_{\ell,a}$.  Writing
$a_1=a-c$ and $a_2=c$ therefore gives the reduced recurrence
\begin{align}
R_{\ell,a}(q,s)={}&
 \sum_{h=0}^{a-1}R_{\ell,h}(a+q-h-1,s)
 +\sum_{r=0}^{q-1}R_{\ell+q-r-1,a}(r,s)
 +D_{\ell,a}(q,s)                                      \label{eq:R-recurrence}\\
&+\sum_{\substack{\ell_1,\ell_2\geq1\\
                    \ell_1+\ell_2=\ell-1}}
  \ \sum_{\substack{a_1,a_2\geq0\\a_1+a_2=a}}\ \sum_{m\geq0}
  R_{\ell_1,a_1}(q,m)R_{\ell_2,a_2}(m,s),             \notag
\end{align}
where
\[
 D_{1,a}(q,s)=\one_{a=0,\,q=s},
 \qquad
 D_{\ell,a}(q,s)=2R_{\ell-1,a}(q,s)\quad(\ell\geq2),
\]
and entries outside the support above are zero.

We call $\ell+a+q$ the \emph{reduced grade} of $R_{\ell,a}(q,\mathord\cdot)$.
Increasing reduced grade is a topological order for
\eqref{eq:R-recurrence}.  Every term of the first two sums and every $D$
term has reduced grade $\ell+a+q-1$.  In a split, the first factor has
reduced grade $\ell_1+a_1+q\leq\ell+a+q-2$.  A nonzero first factor forces
$m\leq q$ when $a_1=0$ and $m\leq a_1+q-1$ when $a_1>0$, so the second
factor has reduced grade $\ell_2+a_2+m\leq\ell+a+q-2$.

The second control coordinate admits a partial quotient as well: raising
both the second source coordinate and the terminal coordinate by one leaves
a reduced entry unchanged whenever the terminal coordinate is at least
$a-1$.

\begin{lemma}[Second-coordinate translation]\label{lem:d2-second-translation}
For $\ell\geq1$ and $a,q,s\geq0$, whenever $s\geq a-1$,
\begin{equation}\label{eq:d2-second-translation}
 R_{\ell,a}(q+1,s+1)=R_{\ell,a}(q,s).
\end{equation}
\end{lemma}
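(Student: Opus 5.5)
We argue by induction on the reduced grade $\ell+a+q$, comparing the reduced recurrence \eqref{eq:R-recurrence} term by term on the two sides of \eqref{eq:d2-second-translation}. The induction hypothesis is the identity for all entries of smaller reduced grade, for every admissible terminal coordinate. The only other input is the exact support from \cref{lem:d2-exact-support}, in reduced form. An entry $R_{\ell,a}(q,s)$ vanishes unless $s\le q$ when $a=0$, and unless $s\le a+q-1$ when $a>0$. In particular $R_{\ell',a'}(0,s')=0$ whenever $s'\ge a'$.

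Fix $\ell,a,q,s$ with $s\ge a-1$ and expand $R_{\ell,a}(q+1,s+1)$ by \eqref{eq:R-recurrence}.

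\emph{Early-band terms.} The term $R_{\ell,h}(a+q-h,s+1)$, for $0\le h\le a-1$, has smaller reduced grade. Since $s\ge a-1\ge h-1$, the hypothesis turns it into $R_{\ell,h}(a+q-h-1,s)$, which is the corresponding term on the right.

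\emph{Last-band terms.} The left side sums over $r=0,\dots,q$, the right side over $r=0,\dots,q-1$. Reindex the left by $r=r'+1$. Each term $R_{\ell+q-r'-1,a}(r'+1,s+1)$ with $0\le r'\le q-1$ becomes $R_{\ell+q-r'-1,a}(r',s)$ by induction, using $s\ge a-1$ again. The leftover $r=0$ term $R_{\ell+q,a}(0,s+1)$ vanishes by the support remark, because $s+1\ge a$.

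\emph{The $D$ term.} For $\ell=1$ the indicator $\one_{a=0,\,q+1=s+1}$ equals $\one_{a=0,\,q=s}$. For $\ell\ge2$ the identity for $R_{\ell-1,a}$ follows by induction.

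The split term is the step I expect to be the main obstacle. The reason is that the intermediate coordinate $m$ has to be shifted in both factors, while the first factor $R_{\ell_1,a_1}(q+1,m)$ only satisfies the translation when its terminal coordinate is at least $a_1-1$, which nothing forces. The plan is as follows.

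First, discard the term $m=0$ on the left. There the second factor is $R_{\ell_2,a_2}(0,s+1)$, which vanishes since $s+1\ge a\ge a_2$.

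Next, reindex $m=m'+1$. For the second factor, $s\ge a-1\ge a_2-1$, so induction gives $R_{\ell_2,a_2}(m'+1,s+1)=R_{\ell_2,a_2}(m',s)$. For the first factor, induction gives $R_{\ell_1,a_1}(q+1,m'+1)=R_{\ell_1,a_1}(q,m')$ when $m'\ge a_1-1$. Both factors have smaller reduced grade by the ordering argument given before the lemma.

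Finally, treat the remaining range $m'\le a_1-2$, where the first factor need not translate. I will show that the second factor vanishes on both sides there. With $a_2>0$, the support condition requires $s\le a_2+m'-1\le a-3$ on the right side and $s+1\le a_2+m'\le a-2$ on the left side. With $a_2=0$, it requires $s\le m'\le a-2$ on the right side and $s+1\le m'+1\le a-1$ on the left side. Each of these contradicts $s\ge a-1$. Hence both sides of the split sum agree term by term, and the induction closes.
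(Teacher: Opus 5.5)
Your induction is correct, and it takes a genuinely different route from the paper.

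The paper argues bijectively on labeled stopped paths. It counts the labels that can end in the terminal band~$1$, and this shows that the least label $z$ of the initial band~$1$ is never read and never joins the head in a path counted by $R_{\ell,a}(q+1,s+1)$. Deleting $z$ therefore gives a path counted by $R_{\ell,a}(q,s)$. Inserting $z$ is the inverse, once one checks that no last-band move of such a path reads a label that came from band~$0$.

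You instead verify the identity from \eqref{eq:R-recurrence} and the vanishing half of \cref{lem:d2-exact-support}. In your proof the hypothesis $s\geq a-1$ enters only through the support. It kills the unmatched left-hand terms $R_{\ell+q,a}(0,s+1)$ and $m=0$. It also kills the second factor in the range $m'\le a_1-2$, where the first factor need not translate. This plays the role of the label count in the paper.

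The paper's argument explains why the symmetry holds, and it gives an explicit bijection of paths that does not pass through \eqref{eq:R-recurrence}, hence not through \cref{lem:d2-translation}. Yours is mechanical and local. It suits a setting where the kernels are defined by their recurrences, as in the Lean development.

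Two details should be tidied:
\begin{enumerate}
\item Your remark that $R_{\ell',a'}(0,s')=0$ whenever $s'\geq a'$ is false at $a'=s'=0$, where the entry is $K_{\ell'}(\mathbf0,\mathbf0)=C_{\ell'}$. Every use you make of it has $s'=s+1\geq1$, and there it is true.
\item The grade bound for the second split factor holds only where the first factor is nonzero. You should say explicitly that $R_{\ell_1,a_1}(q+1,m'+1)$ and $R_{\ell_1,a_1}(q,m')$ vanish for the same $m'$: both require $m'\leq q$ if $a_1=0$, and $m'\leq a_1+q-1$ if $a_1>0$. Every other term is then zero on both sides, so no induction hypothesis of too large a grade is invoked.
\end{enumerate}
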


\begin{proof}
We label the unread values.  In a state with stack $U\mid L$, give the
values of band~$0$, of band~$1$, and of the intervals of $U$ distinct
labels, ordered as the values are.  The transitions of
\eqref{eq:H} from such a state correspond to the labels of the two bands and
of the active head.  An early-band move with parameter $h$ reads the label
of band~$0$ with $h$ labels of band~$0$ below it, and the labels of
band~$0$ above it join band~$1$, below its labels.  A last-band move with
parameter $h$ reads the label of band~$1$ with $h$ labels of band~$1$ below
it, and the labels of band~$1$ above it join the active head, below its
labels.  A letter of the active head of local rank $r$ reads the head
label with $r-1$ head labels below it, and the head is replaced by its labels
below and above that label, in this order.  So a stopped path from a
labeled state is determined by the sequence of labels it reads.

Consider a stopped path counted by $R_{\ell,a}(q+1,s+1)$, and let $z$ be the
least label of the initial band~$1$.  Suppose that $z$ leaves band~$1$, by
being read or by joining the head.  This happens in a last-band move that
reads $z$ or a label below it, and every label of the initial band~$1$ still
in band~$1$ joins the head or is read in the same move.  Afterwards labels
enter band~$1$ only from band~$0$, so the $s+1$ labels of the terminal
band~$1$ belong to the initial band~$0$.  Band~$0$ loses labels only through
early-band moves, each of which reads one of its labels, and it ends empty.
So at most $\max(a-1,0)$ labels of the initial band~$0$ remain unread, while
$s+1\geq\max(a,1)$ since $s\geq a-1$.  This contradiction shows that $z$
stays unread in band~$1$.  In particular every last-band move reads a label
above $z$.  Deleting $z$ from every state leaves the early-band parameters
and the local ranks of head letters unchanged, and lowers each last-band
parameter and the
size of band~$1$ by one, which leaves each head enlargement $\delta_h$
unchanged.  Since $z$ never enters the stack, the stacks are unchanged and
$L$ is exposed at the same step.  This gives a stopped path counted by
$R_{\ell,a}(q,s)$ that reads the same labels.

Conversely, consider a stopped path counted by $R_{\ell,a}(q,s)$.  No
last-band move of it reads a label $y$ of the initial band~$0$.  Such a $y$
would have entered band~$1$ through an early-band move, which read another
label of the initial band~$0$, and reading $y$ would send every label of the
initial band~$1$ still in band~$1$ into the head.  The terminal band~$1$ would then consist of
at most $a-2$ labels of the initial band~$0$, contradicting $s\geq a-1$.  So
every last-band move reads a label of the initial band~$1$.  Insert a label
$z$ into the initial band~$1$ below its labels.  The same labels can then be
read, with each last-band parameter and the size of band~$1$ raised by one
and the head enlargements unchanged.  Since every last-band move reads a
label above $z$, the label $z$ stays unread in band~$1$, so the stacks are
unchanged and $L$ is exposed at the same step.  This gives a stopped path
counted by $R_{\ell,a}(q+1,s+1)$.  The two operations
are inverse.
\end{proof}

The hypothesis $s\geq a-1$ cannot be weakened: equation
\eqref{eq:R-recurrence} gives $R_{1,2}(1,1)=3$ but $R_{1,2}(0,0)=2$.

For $a=0$ and $0\leq s\leq q$, \cref{lem:d2-second-translation} gives
$R_{\ell,0}(q,s)=R_{\ell,0}(q-s,0)$.  For $a\geq1$ and $s\geq a$, it gives
\[
 R_{\ell,a}(q,s)=R_{\ell,a}(q-s+a-1,a-1),
\]
where the support condition ensures that the new second source coordinate is
nonnegative.  Thus we retain only terminal coordinate $s=0$ when $a=0$, and
terminal coordinates
$0\leq s<a$ when $a\geq1$.  The exact number of retained entries is
\begin{align}
 \binom{N+1}{2}+\binom{N+2}{4}
 &=\frac{N(N+1)(N^2+N+10)}{24}.             \label{eq:d2-reduced-counts}
\end{align}
Thus the second translation halves the leading storage constant relative to
the first-coordinate quotient, and each omitted entry is recovered from
\eqref{eq:d2-second-translation} in $O(1)$ operations.  Exact polynomials for loop iterations and
stored entries are recorded in the computation report
\path{notes/av12453_150_computation_report.md} of the supplement.

\begin{corollary}[Sharpened $12453$ bound]\label{cor:d2-complexity}
For every $N\geq1$, the numbers $\lvert\Av_n(12453)\rvert$ for $0\leq n\leq N$
can be computed using
$O(N^7)$ integer arithmetic operations and $O(N^4)$ stored integers.
The bit complexity is $O(N^7M(N\log N))$, and the bit storage is
$O(N^5\log N)$, with $M$ as in \cref{thm:main}.
\end{corollary}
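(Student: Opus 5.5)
We prove the corollary by running the kernel and empty-stack phases of \cref{sec:algorithm} for $d=2$ on the first-coordinate quotient, so that the reduced recurrence \eqref{eq:R-recurrence} takes the place of \eqref{eq:K}. Correctness comes from \cref{lem:d2-translation}: every kernel entry $K_\ell((p,q),(c,s))$ with $c\leq p$ equals $R_{\ell,p-c}(q,s)$, and entries with $c>p$ vanish by \cref{lem:d2-exact-support}. Hence \eqref{eq:R-recurrence} holds, and so does the reduced form of \eqref{eq:G} obtained by substituting reduced entries. Rows are evaluated in increasing reduced grade $\ell+a+q\leq N$, which the discussion after \eqref{eq:R-recurrence} shows is a topological order. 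The $G$ phase then proceeds in increasing control mass, exactly as before.

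The storage bound is read off from \eqref{eq:d2-first-reduced-counts}: there are $O(N^3)$ reduced rows and $O(N^4)$ stored entries. The second translation could be used to reduce this further, but it is not needed for the exponents. The $G$ table stores only $O(N^2)$ values.

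For the operation count, we bound each term of \eqref{eq:R-recurrence} over the $O(N^3)$ reduced sources $(\ell,a,q)$ and the $O(N)$ terminal coordinates $s$ of each row.
\begin{itemize}
\item The early-band and last-band sums each have $O(N)$ terms per entry, so they cost $O(N^5)$ in total.
\item The term $D$ costs $O(1)$ per entry.
\item The split term is the dominant cost. For one source row it ranges over $O(N)$ choices of $\ell_1$ (which fixes $\ell_2$), $O(N)$ choices of $a_1$ (which fixes $a_2$), $O(N)$ intermediate coordinates $m$, and $O(N)$ terminal coordinates $s$. That is $O(N^4)$ per row and $O(N^7)$ in total.
\end{itemize}
The key gain over \cref{thm:main} is that the intermediate control $(c,m)$ and the terminal control have each lost a coordinate: $c$ is absorbed into the splitting $a=a_1+a_2$, and the terminal first coordinate is fixed at $0$. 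The empty-stack phase evaluates \eqref{eq:G} at $O(N^2)$ controls. Each last-band term with parameter $h$ needs the sum $\sum_{\mathbf t}K_{\delta_h}(U_h(\mathbf p),\mathbf t)G_{\mathbf t}$, which runs over $O(N^2)$ terminal controls $(c,s)$ read as reduced entries. This costs $O(N^3)$ per control and $O(N^5)$ in total, well below $N^7$.

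The bit bounds carry over without change from the last paragraph of the proof of \cref{thm:main}. Each reduced entry equals some kernel entry with at most $N$ unread values, so it and all partial sums are at most $(N+1)^N$ and have $O(N\log N)$ bits. This gives $O(N^7M(N\log N))$ bit operations and $O(N^5\log N)$ bits of storage. The only delicate step is confirming that \eqref{eq:R-recurrence} captures every split term after reduction. Specifically, one must check that first factors with $c>a$ vanish, which follows from the first coordinate of \eqref{eq:support-d2}, and that the $K_0$ factors are exactly collected in $D_{\ell,a}$. Both facts were established before the statement, so the proof amounts to bookkeeping on these counts.
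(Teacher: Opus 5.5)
Your proposal is correct and follows the paper's proof essentially step for step. It uses the same reduced recurrence evaluated in increasing reduced grade, reads the storage off \eqref{eq:d2-first-reduced-counts}, counts $O(N^4)$ split operations per row from the indices $(\ell_1,a_1,m,s)$, charges $O(N^5)$ to the nonsplit and empty-stack parts, and inherits the bit bounds from \cref{thm:main}.

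There is one small slip, in your explanatory remark only. The intermediate control does not lose a coordinate: the pair $(a_1,m)$ still gives $O(N^2)$ choices, with $c=a_2$. The whole saving over $O(N^8)$ comes from reducing the terminal control to the single coordinate $s$, which is exactly what your correct $O(N^4)$-per-row count reflects.
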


\begin{proof}
The algorithm evaluates \eqref{eq:R-recurrence} for the reduced sources
$(\ell,a,q)$ with $\ell+a+q\leq N$ in increasing reduced grade, which the
preceding paragraphs showed to be a topological order, then evaluates
\eqref{eq:G}, reading every kernel entry from the first-coordinate quotient
by \eqref{eq:d2-translation}, and returns the values \eqref{eq:answer-G}.
It is correct by the proof of \cref{thm:main} and
\cref{lem:d2-exact-support,lem:d2-translation}.  We count its operations.
The first-coordinate quotient has
$O(N^3)$ source rows and $O(N^4)$ stored entries by
\eqref{eq:d2-first-reduced-counts}.  Evaluating the split terms involves $O(N)$ choices for
the block split $\ell_1$, $O(N)$ choices for the control split $a_1$, and
$O(N)$ possible intermediate second coordinates $m$.  For each such triple,
adding the row $R_{\ell_2,a_2}(m,\mathord\cdot)$ uses $O(N)$ possible terminal
coordinates.  Thus one source row costs $O(N^4)$ operations and all split
terms cost $O(N^7)$.  Evaluating
the nonsplit kernel terms costs at most $O(N^5)$.  The empty-stack table has
$O(N^2)$ source controls, $O(N)$ last-band choices, and $O(N^2)$ possible
terminal controls, so it also costs $O(N^5)$.  The bit-size argument
in the proof of \cref{thm:main} gives $O(N\log N)$ bits per integer, which
gives the stated bit bounds.
\end{proof}

\section{The counting sequence of \texorpdfstring{$\Av(12453)$}{Av(12453)}}
\label{sec:computations}

In this section we describe the computation of $\lvert\Av_n(12453)\rvert$ for
$n\leq150$ and, in \cref{sec:sampling}, uniform random generation.
\Cref{tab:terms-150} lists some of the values.  The file
\path{code/data/av12453_terms_0_150.txt} in the supplement contains all of
them.

\begin{table}[tb]
\centering
\scriptsize
\begin{tabular}{@{}r>{\raggedright\arraybackslash}p{0.88\textwidth}@{}}
\toprule
$n$ & $\lvert\Av_n(12453)\rvert$ \\
\midrule
$ 0 $ & $ 1 $ \\
$ 1 $ & $ 1 $ \\
$ 2 $ & $ 2 $ \\
$ 3 $ & $ 6 $ \\
$ 4 $ & $ 24 $ \\
$ 5 $ & $ 119 $ \\
$ 6 $ & $ 694 $ \\
$ 7 $ & $ 4581 $ \\
$ 8 $ & $ 33286 $ \\
$ 9 $ & $ 260927 $ \\
$\vdots$ & $\vdots$ \\
$ 70 $ & $ 2720470144\allowbreak 2650584805\allowbreak 3184970016\allowbreak 4950418633\allowbreak 1718343803\allowbreak 4114760726\allowbreak 0652518243\allowbreak 22 $ \\
$ 71 $ & $ 3650982161\allowbreak 8390692452\allowbreak 4470453450\allowbreak 9742637654\allowbreak 0767263491\allowbreak 5822649288\allowbreak 4931736173\allowbreak 219 $ \\
$ 72 $ & $ 4905247912\allowbreak 2475377086\allowbreak 4377575385\allowbreak 4964333946\allowbreak 1005664517\allowbreak 0350477388\allowbreak 6106466065\allowbreak 0462 $ \\
$ 73 $ & $ 6597581300\allowbreak 4526715227\allowbreak 9725187963\allowbreak 5194170286\allowbreak 5546101013\allowbreak 4067851632\allowbreak 9598742991\allowbreak 39314 $ \\
$ 74 $ & $ 8883183490\allowbreak 3029995386\allowbreak 3181157479\allowbreak 2997430948\allowbreak 6984701302\allowbreak 4502094615\allowbreak 2645591389\allowbreak 765992 $ \\
$ 75 $ & $ 1197293585\allowbreak 2803456329\allowbreak 9140115867\allowbreak 5897325840\allowbreak 3718404449\allowbreak 7744461031\allowbreak 1323349232\allowbreak 27197902 $ \\
$ 76 $ & $ 1615360029\allowbreak 7873711094\allowbreak 1515106002\allowbreak 9379769950\allowbreak 5865894707\allowbreak 0938911533\allowbreak 7666950300\allowbreak 899664892 $ \\
$ 77 $ & $ 2181542206\allowbreak 2572671915\allowbreak 6139395376\allowbreak 9585550120\allowbreak 9448778205\allowbreak 4427029330\allowbreak 1176937761\allowbreak 8433242686 $ \\
$ 78 $ & $ 2948987237\allowbreak 9065039096\allowbreak 2604916545\allowbreak 1214495399\allowbreak 5161437891\allowbreak 5249501191\allowbreak 0397390414\allowbreak 8385295980\allowbreak 0 $ \\
$ 79 $ & $ 3990128916\allowbreak 9830238253\allowbreak 4404743222\allowbreak 7024955428\allowbreak 5379834607\allowbreak 8734627168\allowbreak 6479611600\allowbreak 9946452966\allowbreak 10 $ \\
$ 80 $ & $ 5403757815\allowbreak 1511512886\allowbreak 6978959334\allowbreak 2973010150\allowbreak 6005764295\allowbreak 7609146014\allowbreak 4034839052\allowbreak 7553113513\allowbreak 544 $ \\
$\vdots$ & $\vdots$ \\
$ 140 $ & $ 1325631292\allowbreak 0967890009\allowbreak 7059991729\allowbreak 9726854851\allowbreak 6229398560\allowbreak 9301526390\allowbreak 6399584352\allowbreak 8477800927\allowbreak 8831298106\allowbreak 9030946201\allowbreak 8830746612\allowbreak 7675440342\allowbreak 4995491130\allowbreak 4218723154\allowbreak 3841227010\allowbreak 96 $ \\
$ 141 $ & $ 1852883512\allowbreak 6015863666\allowbreak 5154188074\allowbreak 8812997339\allowbreak 0850367330\allowbreak 1584716105\allowbreak 5671375191\allowbreak 6345315609\allowbreak 2505084472\allowbreak 6568844410\allowbreak 4442573164\allowbreak 2146730167\allowbreak 0647930415\allowbreak 7265601031\allowbreak 7336745260\allowbreak 230 $ \\
$ 142 $ & $ 2590615022\allowbreak 2930343813\allowbreak 7253209397\allowbreak 7478210955\allowbreak 6708042335\allowbreak 7798067579\allowbreak 8231493541\allowbreak 9893631138\allowbreak 4115189454\allowbreak 8670967420\allowbreak 1933295879\allowbreak 4604234783\allowbreak 1801662279\allowbreak 7071206729\allowbreak 5551728877\allowbreak 3276 $ \\
$ 143 $ & $ 3623141475\allowbreak 0966600876\allowbreak 9192778238\allowbreak 0212981344\allowbreak 7986225375\allowbreak 2577900601\allowbreak 6335293373\allowbreak 9873961213\allowbreak 2874209755\allowbreak 0042572534\allowbreak 3682999003\allowbreak 8858913060\allowbreak 9418876136\allowbreak 3063724056\allowbreak 3247858231\allowbreak 77873 $ \\
$ 144 $ & $ 5068665701\allowbreak 0808099217\allowbreak 2827702390\allowbreak 8247214853\allowbreak 1459218423\allowbreak 6068629394\allowbreak 5507096335\allowbreak 7278450755\allowbreak 8031741140\allowbreak 3139817900\allowbreak 5469949048\allowbreak 0890332852\allowbreak 4631685357\allowbreak 7908221730\allowbreak 0140047632\allowbreak 872622 $ \\
$ 145 $ & $ 7092939827\allowbreak 6475491959\allowbreak 3192205776\allowbreak 2445045917\allowbreak 2585455841\allowbreak 0034064407\allowbreak 2579925164\allowbreak 5016654364\allowbreak 5900069113\allowbreak 8336169565\allowbreak 0825722175\allowbreak 6849768300\allowbreak 9594526954\allowbreak 0900306222\allowbreak 2621709935\allowbreak 1149132 $ \\
$ 146 $ & $ 9928451859\allowbreak 5778899542\allowbreak 6922964104\allowbreak 6456946313\allowbreak 5271052892\allowbreak 3383764487\allowbreak 5179969638\allowbreak 5456954495\allowbreak 8714672498\allowbreak 7087004389\allowbreak 9715835749\allowbreak 1680840589\allowbreak 6796223500\allowbreak 9591429889\allowbreak 6248098119\allowbreak 96930400 $ \\
$ 147 $ & $ 1390137692\allowbreak 8245085643\allowbreak 3777786818\allowbreak 0587785468\allowbreak 8279319447\allowbreak 6967952737\allowbreak 6580482443\allowbreak 6580175611\allowbreak 0716322666\allowbreak 2022575277\allowbreak 0817613657\allowbreak 7674974169\allowbreak 9180828349\allowbreak 4959301849\allowbreak 1511764347\allowbreak 3649518426 $ \\
$ 148 $ & $ 1946944429\allowbreak 4891725939\allowbreak 1570834796\allowbreak 9444206526\allowbreak 7747652257\allowbreak 8329510621\allowbreak 5718353174\allowbreak 0388546951\allowbreak 9572114751\allowbreak 3596204003\allowbreak 2085429461\allowbreak 2592322914\allowbreak 3342885756\allowbreak 7215495002\allowbreak 6956201648\allowbreak 1062878067\allowbreak 2 $ \\
$ 149 $ & $ 2727515338\allowbreak 5986397634\allowbreak 7598705423\allowbreak 9279554827\allowbreak 9852776796\allowbreak 3594985771\allowbreak 5137514148\allowbreak 6565870266\allowbreak 8306436986\allowbreak 9530762605\allowbreak 6547310084\allowbreak 4770203223\allowbreak 1964206135\allowbreak 1163298825\allowbreak 1274688398\allowbreak 6281365819\allowbreak 35 $ \\
$ 150 $ & $ 3822057739\allowbreak 1784040990\allowbreak 0771979163\allowbreak 2020545156\allowbreak 2668309691\allowbreak 7522113147\allowbreak 0523342088\allowbreak 8592406553\allowbreak 2828958769\allowbreak 7088297741\allowbreak 5112039607\allowbreak 5850075847\allowbreak 7699391695\allowbreak 5095899208\allowbreak 9859207315\allowbreak 9171767612\allowbreak 574 $ \\
\bottomrule
\end{tabular}
\caption{The numbers $\lvert\Av_n(12453)\rvert$ for $0\leq n\leq9$, $70\leq n\leq80$,
and $140\leq n\leq150$.}
\label{tab:terms-150}
\end{table}

We evaluated the reduced recurrence modulo pairwise distinct primes and
reconstructed the values over the integers.  The programs
\path{av12453_fast_rns.cpp} and \path{reconstruct_av12453_rns.py} in the
supplement perform the modular evaluation, the check of the coefficient
bound, and the reconstruction.

The reconstruction rests on a bound for the coefficients.  With
$a_m^{(1)}=\lvert\Av_m(1342)\rvert$ as in \eqref{eq:initial-terminal}, put
\begin{equation}\label{eq:exact-crt-bound}
 \mathcal B_N:=\sum_{m=0}^N\binom Nm^2 a_m^{(1)}.
\end{equation}
As B\'ona~\cite{Bona2005} observed, deleting the left-to-right minima of a
$12453$-avoider leaves a $1342$-avoider: the first entry of an occurrence of
$1342$ among the remaining entries is not a left-to-right minimum, so an
earlier smaller entry precedes it and forms a $12453$ with the occurrence.
If $m$ entries remain, choose their positions, their values, and their
standardization.  The deleted minima occupy the remaining positions and take
the remaining values in decreasing order.  Therefore
$\lvert\Av_N(12453)\rvert\leq\mathcal B_N$.

The integers $a_m^{(1)}$ are computed from \eqref{eq:bona-1342}.  The
source \path{av12453_fast_rns.cpp} implements \eqref{eq:R-recurrence} and
the empty-stack recurrence \eqref{eq:G} modulo each selected prime.  By
\cref{thm:literal,cor:protected-tail,prop:kernel-recurrence,%
cor:empty-stack-recurrence,lem:d2-exact-support,lem:d2-translation,%
lem:d2-second-translation}, these recurrences give the coefficients, so a
correct evaluation outputs their residues, and the checks below test the
implementation.  The moduli are the
twenty-four largest primes below $2^{31}$, recorded with the residue packs
in the supplement.  The program \path{reconstruct_av12453_rns.py} verifies
that the product $\mathcal M$ of the eighteen largest exceeds
$\mathcal B_{150}$.  The margin is $\mathcal M/\mathcal B_{150}>1.92$.  Since
the nonnegative bound $\mathcal B_N$
is nondecreasing, every coefficient through degree $150$ lies in
$[0,\mathcal M)$.  The Chinese remainder theorem therefore determines each
coefficient uniquely.

The six remaining primes, withheld from the reconstruction, agree with every
reconstructed coefficient.  A separately written single-modulus $d=2$
program computes all $151$ coefficients modulo $2^{61}-1$ and also agrees.
These are error-detection checks, not inputs to the uniqueness argument.
The values through $n=38$ agree with those previously known, in OEIS
A116485~\cite{OEISA116485}, which come from Biers-Ariel's
program~\cite{BiersArielA116485}.

\subsection{Uniform random generation}\label{sec:sampling}

The transfer tables can also be used to generate uniformly random avoiders.  We
call a maximal transition sequence a \emph{complete path}.  By
\cref{thm:literal}, the complete paths from the initial state
$((n,0,\ldots,0),\varnothing)$ correspond bijectively to the
$\beta_d$-avoiding permutations of length $n$.  By the recursive method of
Nijenhuis and Wilf~\cite{NijenhuisWilf1978}, such a path is uniformly random
when every transition is chosen with probability proportional to the number
of complete paths through it, that is, to the value of the
corresponding summand of \eqref{eq:H}.  That value is a completion count
$H_{\mathbf p}(L)$ for a stack $L$ of arbitrary length, which the
factorization \eqref{eq:matrix-product} expresses through the stored
kernels.  We therefore sample a complete path of the factorized recurrence
itself.  The first-move partitions in the proofs of
\cref{prop:kernel-recurrence,cor:empty-stack-recurrence} identify the
summands of \eqref{eq:K} and \eqref{eq:G} with a partition of the stopped
and complete paths.  Iterating the cut-and-concatenate bijection of
\cref{thm:protected-tail-principle} maps these paths onto the complete
paths of \eqref{eq:H}.  At an empty-stack state, a summand of \eqref{eq:G}
is chosen with probability proportional to its value.  This selects a base
letter and, for a last-band move, the terminal control $\mathbf t$ at which
the new head will have been exhausted.  Inside a kernel
$K_\ell(\mathbf p,\mathbf t)$, a summand of \eqref{eq:K} is chosen in the
same way.  This selects a base move, an endpoint of the active head, or an
interior letter of it together with the intermediate control $\mathbf u$ of
the split, and each block of the stack carries the control at which it must be
exposed.  By induction on the grade and the control mass, a given stopped path
counted by $K_\ell(\mathbf p,\mathbf t)$ is produced with probability
$1/K_\ell(\mathbf p,\mathbf t)$, and a given complete path from the state
$(\mathbf p,\varnothing)$ with probability $1/G_{\mathbf p}$.  At
$\mathbf p=(n,0,\ldots,0)$ this is $1/\lvert\Av_n(\beta_d)\rvert$.

\begin{proposition}[Uniform random generation]\label{prop:sampling}
Let $N\geq n\geq0$.  Once the first-coordinate quotient $R_{\ell,a}(q,s)$ of
\cref{sec:d2-support} and the empty-stack values $G_{(p,q)}$ have been
computed for size $N$, a uniformly random element of $\Av_n(12453)$ can be
produced with $O(n^4)$ arithmetic operations on the stored integers.
\end{proposition}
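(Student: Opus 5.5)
The plan is to realize the recursive method sketched before the statement as a sampler, with two ingredients: correctness, meaning each complete path is produced with probability $1/G_{(n,0)}$, and an operation count of $O(n^4)$. Correctness is essentially the induction already outlined in the text. The first-move partitions of \cref{prop:kernel-recurrence,cor:empty-stack-recurrence} split the stopped paths counted by $K_\ell(\mathbf p,\mathbf t)$, and the complete paths counted by $G_{\mathbf p}$, into disjoint classes. Each class is counted by one summand. A split summand $K_a(\mathbf p,\mathbf u)K_b(\mathbf u,\mathbf t)$ further decomposes, by the cut bijection of \cref{thm:protected-tail-principle}, into a pair of independent stopped paths.

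The sampler maintains a stack of pending blocks, each tagged with a pair (size, target control). At an empty-stack state $\mathbf p$, it chooses a summand of \eqref{eq:G} with probability proportional to its value. A last-band summand also fixes the terminal $\mathbf t$, and it pushes a block $(\delta_h,\mathbf t)$; when $\delta_h=0$ nothing is pushed. With a block $(\ell,\mathbf t)$ on top, it chooses a summand of \eqref{eq:K} at the current control:
\begin{enumerate}
\item[(i)] an early-band summand changes the control;
\item[(ii)] a last-band summand enlarges the top block to $\ell+\delta_h$;
\item[(iii)] a split summand with intermediate $\mathbf u$ replaces the block by $(a,\mathbf u)$ over $(b,\mathbf t)$, emitting the head letter of local rank $a+1$.
\end{enumerate}
Multiplying conditional probabilities telescopes to $1/K_\ell(\mathbf p,\mathbf t)$ by induction on grade and then on control mass, exactly as stated. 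Translating the emitted moves back to letters uses \cref{prop:state-invariant}(b). To do this, we keep the actual sets of band values and interval values, which costs $O(n)$ per step and $O(n^2)$ overall. Every entry $K_\ell((p,q),(c,s))$ is read as $R_{\ell,p-c}(q,s)$ via \cref{lem:d2-translation}, using the support \cref{lem:d2-exact-support} to skip zeros.

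The cost is the main point. There are at most $n$ letter-emitting steps, but the choices are not all cheap. A choice at a kernel state enumerates:
\begin{enumerate}
\item[(a)] $O(n)$ early-band terms;
\item[(b)] $O(n)$ last-band terms;
\item[(c)] split terms indexed by $a\le \ell-1$ and $\mathbf u$ in the support.
\end{enumerate}
The support has $O(n^2)$ points by \eqref{eq:support-d2}, so a naive enumeration of split terms costs $O(n^3)$ per step and $O(n^4)$ in total. A choice at an empty-stack state enumerates $O(n)$ early-band terms plus pairs $(h,\mathbf t)$ with $\mathbf t$ in a support of size $O(n^2)$, which is $O(n^3)$. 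Computing each probability weight is one product of stored integers, and drawing a uniform integer below the total costs $O(1)$ arithmetic operations on the stored integers.

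Some transitions (early-band and last-band) change the control without emitting a head letter. Still, every transition, including each split, reads exactly one letter, so there are exactly $n$ choices. Hence the total is $n\cdot O(n^3)=O(n^4)$.

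The step I expect to be the main obstacle is justifying the $O(n^3)$ bound per choice, and in particular ensuring that no choice involves a sum over more than $O(n^3)$ terms. Two places need care:
\begin{enumerate}
\item[(a)] In the split sum, the intermediate $\mathbf u=(c,m)$ must range only over $c\le p$ and $c+m\le p+q-1$ (or $\mathbf u=\mathbf p$), and the factor $K_b(\mathbf u,\mathbf t)$ must be nonzero only for those $\mathbf u$ compatible with $\mathbf t$.
\item[(b)] Blocks of size $0$ created by splits are popped immediately at no cost, and when the split is $b=0$ the target control is simply inherited.
\end{enumerate}
I would also verify that the stack of pending blocks never needs more than $O(n)$ entries. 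This holds because each pushed block has positive size except for trivially discarded ones, and block sizes sum to at most $n$.
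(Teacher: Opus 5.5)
Your proposal is correct and follows essentially the same route as the paper: you sample the factorized path summand by summand, with probabilities proportional to the values in \eqref{eq:G} and \eqref{eq:K} and with target controls carried on the blocks, and you bound the cost by $n$ letter choices of $O(n^3)$ summands each, dominated by the last-band terms at empty-stack states and the split terms inside a kernel. The paper also states explicitly a point your argument uses only implicitly: every kernel row consulted has reduced grade at most $n\le N$ and every empty-stack control has mass at most $n$, so all needed entries lie in the precomputed tables.
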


\begin{proof}
The paragraph before the proposition shows that the distribution is uniform.  A permutation of length $n$ is
produced by $n$ letter choices.  Each choice scans the summands of one
instance of \eqref{eq:G} or \eqref{eq:K} in a fixed order and stops at the first summand whose running
total exceeds a uniformly random integer $y$ with $0\leq y<Y$, where $Y$ is
the stored value of the state.  Every summand is one stored entry or a
product of two, by \cref{lem:d2-translation}.  Every kernel row consulted
has reduced grade at most $n$, and every empty-stack control has mass at
most $n$.  The sampler keeps the value sets of the bands and of the
intervals of the stack as sorted lists, so that it outputs the letter of
each chosen transition and updates the sets in $O(n)$ operations.  At an empty-stack state the
last-band terms range over the band position and the terminal control,
$O(n^3)$ summands.  Inside a kernel the interior terms range over the split
position and the intermediate control, again $O(n^3)$.  The other groups are
smaller.  Therefore each choice costs $O(n^3)$ operations and a sample costs
$O(n^4)$.
\end{proof}

Unlike the coefficients of \cref{tab:terms-150}, our sampler runs in
floating point.  \Cref{prop:sampling} is the exact-arithmetic statement.
\Cref{app:float} describes the implementation and proves that, for
$n\leq300$ and with ideal random bits, its output is within total variation
distance
$3.5\cdot10^{-5}$ of the uniform distribution on $\Av_n(12453)$
(\cref{prop:float-sampling}).  Every sample is verified to avoid $12453$ by
an independent test based on \cref{lem:trigger}, and the sampler was checked
against exhaustive enumeration for $7\leq n\leq10$.  Building
the tables for $N=300$ took about four hours on an eight-core aarch64
machine and produced a table of $5.4$~GB, which the sampler holds in
memory.  One million
samples of length $300$ then took about thirty-five minutes.

\Cref{fig:heatmap} shows the resulting position--value heatmap, rendered
with the script that draws the heatmaps of the PermPAL database
\cite{PermPAL,AlbertEtAl2026}, beside one of the samples.  The first letter
has mean $291.9$ over the one million samples.  The dark band from the top left to the bottom
right shows that a typical avoider stays close to a decreasing curve.  The
left-to-right minima lie below it, and their mean number is $77.5$.  The single
sample shows the same features: a decreasing band of letters with the
left-to-right minima scattered below it, and above it the letters
exceeding the first letter, which form a $1342$-avoider.

\begin{figure}[tb]
\centering
\begin{minipage}[c]{0.47\textwidth}\centering
\includegraphics[width=\textwidth]{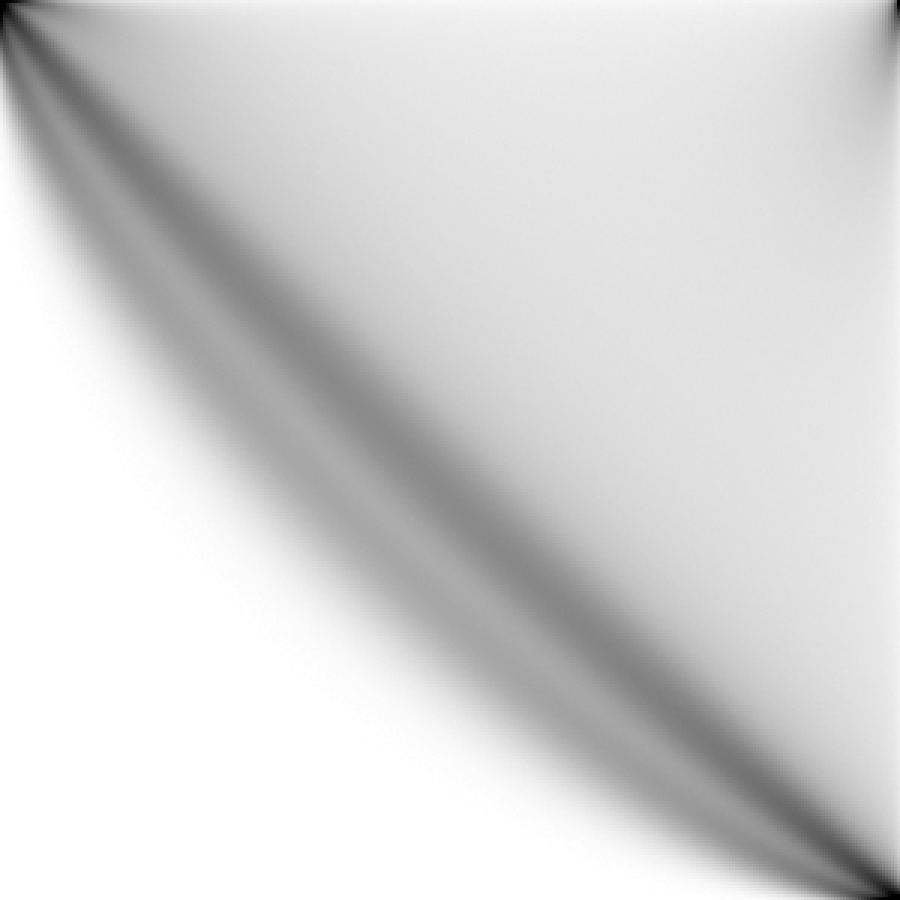}
\end{minipage}\hfill
\begin{minipage}[c]{0.47\textwidth}\centering
\setlength{\permcell}{\textwidth}\divide\permcell by 300\relax
\setlength{\permdot}{3\permcell}\divide\permdot by 5\relax 
\begin{tikzpicture}[x=\permcell,y=\permcell]
  \useasboundingbox (0.5,0.5) rectangle (300.5,300.5);
  \draw[black!40,line width=0.3pt] (0.5,0.5) rectangle (300.5,300.5);
  \foreach \i/\v in {%
    1/272, 2/262, 3/259, 4/258, 5/297, 6/252, 7/249, 8/296, 9/294, 10/291,
    11/242, 12/288, 13/233, 14/299, 15/289, 16/279, 17/231, 18/225, 19/273,
    20/298, 21/221, 22/211, 23/208, 24/243, 25/290, 26/244, 27/286, 28/241,
    29/236, 30/227, 31/222, 32/223, 33/219, 34/218, 35/205, 36/281, 37/198,
    38/192, 39/220, 40/189, 41/224, 42/226, 43/276, 44/217, 45/230, 46/187,
    47/229, 48/228, 49/181, 50/275, 51/268, 52/267, 53/266, 54/178, 55/232,
    56/216, 57/212, 58/213, 59/177, 60/265, 61/235, 62/214, 63/210, 64/176,
    65/234, 66/215, 67/264, 68/203, 69/170, 70/169, 71/263, 72/260, 73/165,
    74/200, 75/163, 76/199, 77/157, 78/154, 79/150, 80/149, 81/201, 82/148,
    83/197, 84/251, 85/143, 86/195, 87/248, 88/194, 89/172, 90/247, 91/158,
    92/141, 93/155, 94/156, 95/139, 96/137, 97/237, 98/135, 99/133, 100/159,
    101/209, 102/160, 103/207, 104/164, 105/161, 106/130, 107/125, 108/162,
    109/206, 110/191, 111/166, 112/167, 113/144, 114/124, 115/120, 116/146,
    117/145, 118/190, 119/123, 120/128, 121/127, 122/122, 123/126, 124/129,
    125/188, 126/112, 127/182, 128/107, 129/180, 130/179, 131/121, 132/105,
    133/117, 134/175, 135/118, 136/174, 137/119, 138/116, 139/104, 140/134,
    141/131, 142/95, 143/92, 144/115, 145/132, 146/109, 147/108, 148/91,
    149/78, 150/103, 151/70, 152/106, 153/111, 154/110, 155/98, 156/99,
    157/93, 158/94, 159/173, 160/140, 161/96, 162/138, 163/136, 164/57,
    165/97, 166/89, 167/56, 168/54, 169/102, 170/100, 171/53, 172/90,
    173/101, 174/113, 175/114, 176/171, 177/151, 178/142, 179/147, 180/152,
    181/168, 182/153, 183/186, 184/87, 185/185, 186/184, 187/39, 188/88,
    189/183, 190/202, 191/196, 192/193, 193/204, 194/238, 195/246, 196/245,
    197/240, 198/239, 199/250, 200/253, 201/80, 202/73, 203/74, 204/38,
    205/69, 206/256, 207/71, 208/76, 209/75, 210/72, 211/68, 212/255,
    213/254, 214/64, 215/33, 216/65, 217/86, 218/85, 219/66, 220/30, 221/61,
    222/62, 223/83, 224/29, 225/82, 226/79, 227/59, 228/27, 229/55, 230/60,
    231/48, 232/49, 233/50, 234/47, 235/51, 236/52, 237/25, 238/21, 239/20,
    240/58, 241/45, 242/46, 243/15, 244/63, 245/67, 246/77, 247/81, 248/84,
    249/44, 250/257, 251/12, 252/261, 253/11, 254/271, 255/270, 256/269,
    257/274, 258/280, 259/278, 260/10, 261/277, 262/6, 263/282, 264/5,
    265/43, 266/2, 267/42, 268/41, 269/34, 270/31, 271/36, 272/35, 273/32,
    274/19, 275/284, 276/18, 277/16, 278/283, 279/22, 280/17, 281/26,
    282/24, 283/1, 284/23, 285/40, 286/14, 287/37, 288/28, 289/285, 290/287,
    291/292, 292/8, 293/7, 294/13, 295/3, 296/4, 297/9, 298/295, 299/293,
    300/300}
    \fill (\i,\v) circle[radius=\permdot];
\end{tikzpicture}%
%
\end{minipage}
\caption{Left: position--value heatmap of one million elements of
$\Av_{300}(12453)$ drawn by the floating-point sampler.  The cell at position $i$ (left to right) and value $v$
(bottom to top) is shaded by the square root of the number of samples with
$\pi_i=v$, from white (no sample) to black (the largest cell count,
$70\,204$).  Right: one of the samples, with a dot at $(i,\pi_i)$ in the
same orientation.  Its first letter is $272$ and it has $71$ left-to-right
minima.}
\label{fig:heatmap}
\end{figure}

\section{Formal verification}\label{sec:formal}

The literal and kernel recurrences of this paper for $d\leq2$ have been
verified in the Lean~4 proof assistant \cite{Lean4} with the Mathlib library
\cite{Mathlib}.  The development is in the directory \path{formal/} of the
supplement.  Its four main theorems state that, for every $n$,
\begin{align*}
 W_n(\varnothing)&=\lvert\Av_n(1342)\rvert,
 &G_n&=\lvert\Av_n(1342)\rvert,\\
 H_{(n,0)}(\varnothing)&=\lvert\Av_n(12453)\rvert,
 &G_{(n,0)}&=\lvert\Av_n(12453)\rvert.
\end{align*}
The development proves the counting statement of \cref{thm:literal} for
$d\leq2$, that a legal prefix with state $(\mathbf p,L)$ has
$H_{\mathbf p}(L)$ completions to a $\beta_d$-avoiding permutation, and the
first and third theorems are its case of the empty prefix.  The second and
fourth say that the recurrences evaluated by the algorithm of
\cref{sec:algorithm} count $\Av_n(1342)$ and $\Av_n(12453)$.

A reader who wants to verify this needs to do two things.  The first
is to run the scripts \path{check.sh} and \path{comparator.sh}.  The script
\path{check.sh} builds the development and checks that its proof libraries
and the file \path{Solution.lean} contain no unproved statement and that
every theorem of the libraries depends only on Lean's three standard axioms.
The script \path{comparator.sh} checks, with Lean's kernel and the two
independent kernels nanoda and con-ron, that the proofs in
\path{Solution.lean} prove exactly the statements of the file
\path{Challenge.lean} and use no other axioms.  In \path{Challenge.lean} each
of the four statements has the placeholder \texttt{sorry} in place of its
proof.  The second thing to do is to read \path{Challenge.lean}, about
180 lines that import only Mathlib and contain the four statements together
with every definition they use: pattern containment, the patterns $1342$ and
$12453$, and the recurrences.  The proofs need not be read.

The proofs follow the paper.  The interval stack of a scanned prefix is
computed from the prefix by the moves of \cref{prop:state-invariant}(b) and
its control from the thresholds, the separation criterion of \cref{lem:separators} is the
inductive invariant, \cref{prop:scan-states}(a) and (b) are derived from it,
and the recurrences follow by induction on the number of unread values.  The
kernels are defined by their recurrences, and the factorization
\eqref{eq:factorization} for a single head $U=(\ell)$ is proved from the
equations of $H$ by induction on the grade rather than by cutting paths.
Not formalized are, among others, the complexity statements of
\cref{thm:main,cor:d2-complexity}, the description of the kernels as counts
of stopped paths, \cref{prop:sampling} and \cref{app:float}, condition (iii)
of \cref{cor:separators}, \cref{prop:exponential}, and the family for
$d\geq3$.  The translation quotients of \cref{sec:d2-support} and the bound
$\lvert\Av_N(12453)\rvert\leq\mathcal B_N$, on which the programs of
\cref{sec:computations} rely, are not formalized either.  \Cref{app:lean} describes the development in more
detail.

\section{Consequences and outlook}\label{sec:discussion}

In this section we place $12453$ among the length-five classes and state a
conjecture on the asymptotics of its counting sequence.  Two patterns are \emph{Wilf-equivalent} if they have equally many avoiders
of every length.  Reverse, complement, and inverse preserve all counting
sequences.  The dihedral orbit of $12453$ has eight elements, and the
prefix-reversal equivalence of Backelin--West--Xin
\cite{BackelinWestXin2007} gives
$12453=12 \oplus 231 \sim 21 \oplus 231=21453$, whose orbit supplies eight more.  Therefore the
algorithm, bounds, and coefficients apply to all sixteen members
of the length-five Wilf class represented by $12453$, which has no other
members~\cite[Table~1]{ClisbyConwayGuttmannInoue2022}.

B\'ona determined the Stanley--Wilf limit of every member of the
family \cite{Bona2005}:
\begin{equation*}
 \lim_{n\to\infty}\bigl(a_n^{(d)}\bigr)^{1/n}=(d-1+\sqrt{8})^2 .
\end{equation*}
For the remainder of this section, put
$a_n=a_n^{(2)}=\lvert\Av_n(12453)\rvert$ and $\mu=(1+\sqrt{8})^2=9+4\sqrt{2}$.
The coefficients through $n=100$, read with the series analysis of Clisby,
Conway, Guttmann, and Inoue~\cite{ClisbyConwayGuttmannInoue2022}, suggest the
following refinement.

\begin{conjecture}\label{conj:12453-asymptotic}
There are constants $C,\kappa>0$ and $g,h\in\mathbb R$ such that, as
$n\to\infty$,
\begin{equation}\label{eq:12453-asymptotic}
 a_n=C\mu^n e^{-\kappa n^{1/3}}n^{g}
 \left(1+h n^{-1/3}+O(n^{-2/3})\right).
\end{equation}
\end{conjecture}

The stretched exponent $1/3$ agrees with the value that the 2022 analysis
found most likely~\cite[Section~10.3]{ClisbyConwayGuttmannInoue2022}.
Unweighted least squares on $70\leq n\leq100$ for the model
\[
 \log a_n=n\log\mu-\kappa n^{1/3}+g\log n+\log C+h n^{-1/3}
\]
gives $g\approx-4.251$, and we fixed the nearby value $g=-17/4$.  Both
exponents were chosen using only $a_0,\ldots,a_{100}$.  With
$g=-17/4$, the same fit gave the parameters $\kappa,\log C,h$, which were
fixed before $a_{101},\ldots,a_{150}$ were computed.  The frozen values are
$\kappa=1.34550865$, $C=0.82710184$ and $h=1.36800770$, to the digits
shown.
On those fifty excluded coefficients, the maximum absolute difference
between the observed and predicted values of $\log a_n$ is below
$4.3\cdot10^{-7}$.  The two fits and this check are reproduced by a
program in the supplement.

From shorter series, the 2022 analysis concluded that the
stretched-exponential base $e^{-\kappa}$ (their $\mu_1$) is $0.27\pm0.02$
and that the power-law exponent is
$-4.7\pm0.3$~\cite[Section~10.3]{ClisbyConwayGuttmannInoue2022}.  The frozen
fit gives $e^{-\kappa}\approx0.260$, inside that range.  Its exponent
$-17/4$ lies outside the 2022 range, but we do not regard the two as in
conflict.  On $70\leq n\leq150$ the exponent $-17/4$ fits the model with
one correction term far better than their single-fit value $-4.67$ does.
The preference shrinks by more than an order of magnitude once a second
correction term $n^{-2/3}$ is admitted.  In fits with two correction terms,
a change in the stretched exponent $1/3$ is largely compensated by changes
in the power-law exponent and in $\kappa$.  The inferred power-law exponent
is therefore sensitive to the correction terms included, and the available
coefficients do not determine it reliably, which is why
\cref{conj:12453-asymptotic} leaves $g$ unspecified.  We report
$g\approx-17/4$ as the value fitted with the stretched exponent $1/3$ and
one correction term, not as a sharpening of the 2022 estimate.  The holdout
test supports \cref{conj:12453-asymptotic} but does not prove it.

\section*{Data and code availability}

The code, coefficients, residues, reconstruction script, and computation
reports used for \cref{sec:computations}, the sampler and heatmap tools of
\cref{sec:sampling}, the Permuta walkthrough with its exhaustive checks for
length at most $8$, and the Lean development of \cref{sec:formal} and \cref{app:lean} are in the
public repository
\begin{center}
\url{https://github.com/ulfarsson/public-12453}
\end{center}
\noindent
in its release v1.4.0, which is the supplement referred to throughout the
paper.  The code, data, and Lean development are released under the Apache
License 2.0.  The heatmap of \cref{fig:heatmap} was drawn from the count
matrix in the repository by the heatmap script of the PermPAL database,
written by Jay Pantone and Vince Vatter, which is not part of the
repository.  The repository's
\path{MANIFEST.sha256} records the SHA-256 digest of every file, and its README files
give verification, reconstruction, and audit commands.

\section*{Statement on the use of AI}
Large language model assistants (Claude Opus and Claude Fable from
Anthropic, and GPT Sol from OpenAI) were used throughout the preparation of
this paper under the direction of the author: to draft and
revise the text, to write and test the companion programs and run the
computations reported in \cref{sec:computations}, and to develop the Lean
formalization of \cref{sec:formal}.  The mathematical content was specified
and checked by the author.  The computational claims are supported by the
independent implementations and checks described in
\cref{sec:computations} and, for the sampler, by the error bound of
\cref{app:float}, and the statements of \cref{sec:formal} are verified
by the Lean kernel.  In addition, the Permuta walkthrough in the
supplement compares finite instances of the paper's combinatorial statements
with brute-force computations from the definitions, including exhaustive
checks on permutations of length at most $8$.  Each check records its
parameter range and scope.  These checks are written with the Permuta library
\cite{Permuta}, whose core code predates the large language model assistants, and
they do not use the implementations described in \cref{sec:computations}.
The author takes full responsibility for the paper.

\appendix
\crefalias{section}{appendix}
\section{Floating-point sampling}\label{app:float}

This appendix describes the floating-point implementation of the sampler of
\cref{prop:sampling} and bounds the distance of its output distribution from
uniformity.  Throughout, the tables have size $N=300$ and $n\leq N$.

\subsection{The implementation}\label{app:float-implementation}

For $d=2$ and a control $(p,q)\neq(0,0)$, the empty-stack recurrence
\eqref{eq:G} reads, in the reduced entries,
\begin{equation}\label{eq:G-reduced}
G_{(p,q)}=\sum_{h=0}^{p-1}G_{(h,p+q-1-h)}
 +\one_{q\geq1}G_{(p,q-1)}
 +\sum_{h=0}^{q-2}\sum_{c=0}^{p}\sum_{t\geq0}R_{q-1-h,p-c}(h,t)\,G_{(c,t)}.
\end{equation}
The middle term is the last-band move with $\delta_h=0$.  At $N=300$ the
largest values are about $2^{1115}$, beyond the largest binary64 number,
which is below $2^{1024}$.  The implementation therefore stores the scaled
values
\[
 R'_{\ell,a}(q,s)=2^{-2(\ell+a+q)}R_{\ell,a}(q,s),
 \qquad
 G'_{(p,q)}=2^{-2(p+q)}G_{(p,q)}.
\]
In the scaled values, every band term and every $D_{\ell,a}$ with
$\ell\geq2$ in \eqref{eq:R-recurrence} is multiplied by $1/4$, the split
term becomes
$4^{m-1}R'_{\ell_1,a_1}(q,m)R'_{\ell_2,a_2}(m,s)$, and $D_{1,a}(q,s)$ becomes
$2^{-2(1+a+q)}\one_{a=0,\,q=s}$.  In \eqref{eq:G-reduced} the first two terms
are multiplied by $1/4$ and the last becomes
$4^{t-1}R'_{q-1-h,p-c}(h,t)G'_{(c,t)}$.  In the build for $N=300$ the nonzero
scaled values lie between $2^{-600}$ and $2^{515}$, inside the normal range
of binary64, and none is infinite or subnormal.

A term $4^{m-1}r_1r_2$ with scaled factors $r_1,r_2$ is formed as
$(2^{m-1}r_1)(2^{m-1}r_2)$, or as $(4^{m-1}r_1)r_2$ when
$4^{m-1}r_1\leq10^{300}$.  The factors are then at least $2^{-603}$, while
$r_1r_2$ can be as small as $2^{-1200}$ and would underflow.  The program computes the reduced
entries in increasing reduced grade and then the values $G'_{(p,q)}$ in
increasing $p+q$.  It forms each value by adding its summands to a running
sum, one at a time, in a fixed order, starting from zero.  In
\eqref{eq:G-reduced} the terms with fixed $h$ and $c$ are first added into
an inner sum over $t$.  The programs use IEEE binary64 arithmetic with
rounding to nearest and are compiled without options, such as
\texttt{-ffast-math}, that allow the compiler to reassociate or otherwise
reorder floating-point operations.  The compiler may fuse a multiplication
with the addition that follows it.

The sampler follows \cref{prop:sampling}.  Its state is the control $(p,q)$
together with the stack of blocks, each carrying its size and the
control at which it must be exposed.  At an empty-stack state the candidates
of a letter choice are the summands of \eqref{eq:G-reduced}, and the stored
value of the state is $G'_{(p,q)}$.  At a state whose head has size $\ell$
and is to be exposed at control $(c,t)$, the stored value is
$R'_{\ell,a}(q,t)$ with $a=p-c$, and the candidates are the summands of
\eqref{eq:R-recurrence}.  For $\ell\geq2$ the term $D_{\ell,a}(q,t)$ is two
candidates, one for each endpoint of the head.  Each candidate is given its
value in the scaled recurrence, so that the exact values of the candidates
of a state sum to the exact scaled value of the state.

The candidates are scanned in a fixed order, in groups.  Each band term and
each endpoint is a group of its own.  At an empty-stack state the terms of
the last sum of \eqref{eq:G-reduced} with fixed $h$ and $c$ form one group,
and inside a head the split terms with fixed $\ell_1$ and $a_1$ form one
group.  The sampler computes the sum of each group by adding its members in
order, and the running sums $A_1\leq A_2\leq\cdots$ of the group sums by
adding these in order.  It takes a multiple $x$ of $2^{-53}$ in $[0,1)$ from
$53$ random bits and rounds $x\hat T$ to $y$, where $\hat T$ is the stored
value of the state.  The random bits come from the generator xoshiro256**
of Blackman and Vigna~\cite{BlackmanVigna2021}, seeded for each sample
through splitmix64.  The sampler then takes the first group $g$ with
$A_g>y$ and,
inside that group, the first member whose running sum exceeds the rounded
difference $y-A_{g-1}$, or the last member if there is none.  Candidates of
value zero are skipped.  If no group has $A_g>y$, the sampler scans again
with $y$ multiplied by $\hat A/\hat T$, where $\hat A$ is the total of the
scan, and if
that also falls short it takes the last candidate of positive value.

\subsection{The error bound}\label{app:float-bound}

We write $\mathrm{fl}(z)$ for $z$ rounded to binary64, $u=2^{-53}$ for the
unit roundoff and $\gamma_m=mu/(1-mu)$ for $mu<1$.  If $z$ is the sum or the
product of two binary64 numbers, or a product plus a number in a fused
operation, and $|z|$ lies in the normal range, then
$\mathrm{fl}(z)=z(1+\delta)$ with $|\delta|\leq u$.  A multiplication by a
power of two is exact when the result is normal.  If $|\theta_j|\leq\gamma_j$
and $|\theta_k|\leq\gamma_k$, then $(1+\theta_j)(1+\theta_k)=1+\theta$ with
$|\theta|\leq\gamma_{j+k}$, and a product of $m$ factors $1+\delta_i$ with
$|\delta_i|\leq u$ is $1+\theta$ with $|\theta|\leq\gamma_m$
\cite[Lemmas~3.1 and~3.3]{Higham2002}.

For $R_{\ell,a}(q,s)$ put $\lambda=\ell+a+q-s$, and for $G_{(p,q)}$ put
$\lambda=p+q$.  This is the number of letters read by the stopped paths of
$R_{\ell,a}(q,s)$ and by the complete paths from the state
$((p,q),\varnothing)$, and
$\lambda\leq N$.  Every band term and every $D$ term has one letter fewer.
The two factors of a split term together have $\lambda-1$ letters, since the
split reads one letter itself, and the same holds for the two factors of a
term of the last sum of \eqref{eq:G-reduced}.  Put $\nu=3\,352\,801$.
We count the summands of \eqref{eq:G-reduced}, and of
\eqref{eq:R-recurrence} for fixed $s$, with every index in the support of
\cref{lem:d2-exact-support}, counting $D_{\ell,a}$ with $\ell\geq2$ twice,
once for each endpoint.  For $G_{(p,q)}$ with $n=p+q$ there are $n$ of them
if $q=0$, and $n+(n-2)(p+1)(q-1)/2\leq N+N^2(N-2)/8=\nu-1$ if $q\geq1$,
since $(p+1)+(q-1)=n$, with equality at $(149,151)$.  For $R_{\ell,a}(q,s)$
with $t=a+q$ there are at most $t+1$ if $\ell=1$, and at most
$t+\ell+(\ell-2)t(t+1)/2\leq N+N(N-2)^2/8<\nu-1$ if $\ell\geq2$, since
the band and $D$ terms number at most $t+2$, the split terms with fixed
$\ell_1$ at most $(a+1)q+a(a+1)/2+1\leq t(t+1)/2+1$, while $t+\ell\leq N$
and $(\ell-2)t\leq(N-2)^2/4$.  So no value has more than $\nu-1$ summands and no
letter choice has more than $\nu-1$ candidates.

\begin{lemma}[Accuracy of the tables]\label{lem:float-tables}
Every stored value is its scaled exact value times $1+\theta$ with
$|\theta|\leq\gamma_{\nu\lambda}$.  In particular, a stored value is zero
if and only if its exact value is zero.
\end{lemma}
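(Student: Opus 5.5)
We argue by strong induction on $\lambda$, processing the values in the order in which the program computes them: reduced entries in increasing reduced grade, then the values $G'_{(p,q)}$ in increasing $p+q$. The base cases are the $D_{1,a}$ terms and $G'_{(0,0)}=1$. These are exact, since a power of two times an indicator is exact when normal, and the build confirms that no stored value is subnormal. For the inductive step, fix a stored value $V$ with letter count $\lambda$. By the paragraph before the lemma, each of its summands involves stored factors whose letter counts sum to at most $\lambda-1$. So each summand is an exact product of one or two factors, each of the form (scaled exact value)$\cdot(1+\theta_i)$ with $|\theta_i|\leq\gamma_{\nu\lambda_i}$ and $\lambda_1+\lambda_2\leq\lambda-1$. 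Combining the errors gives $1+\theta$ with $|\theta|\leq\gamma_{\nu(\lambda-1)}$.

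Next we account for the rounding inside the computation of one summand. The scaling by $4^{m-1}$ or $2^{m-1}$ is exact, because the scaled factors stay normal: they are at least $2^{-603}$ by the choice between the two forms, and the build reports all values in the normal range. The one multiplication contributes at most one factor $1+\delta$. In \eqref{eq:G-reduced} the inner sum over $t$ is formed first, but each of its additions is still a single rounding on a partial sum. Every summand then enters a running sum started from zero, with at most $\nu-1$ additions in total by the summand count; a fused multiply--add merges a product with its addition into one rounding, which only helps. All summands are nonnegative, so we use the standard bound for recursive summation of nonnegative terms \cite[Lemmas~3.1 and~3.3]{Higham2002}. Each summand is multiplied by at most $\nu-1$ factors $1+\delta_i$ from the additions, plus one from its own product. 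This gives at most $\nu$ rounding factors per summand, so each summand acquires a relative error of at most $\gamma_\nu$. Multiplying by the inherited $\gamma_{\nu(\lambda-1)}$ yields $\gamma_{\nu\lambda}$ per summand. Because all terms are nonnegative, a common bound $1+\theta_j$ with $|\theta_j|\leq\gamma_{\nu\lambda}$ for every summand passes to the sum: it equals the exact sum times a convex combination of the $1+\theta_j$, which is $1+\theta$ with $|\theta|\leq\gamma_{\nu\lambda}$.

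The main obstacle is justifying that every intermediate quantity stays in the normal range, since the model $\mathrm{fl}(z)=z(1+\delta)$ fails on underflow. I would take this from the reported build facts: all stored values lie in $[2^{-600},2^{515}]$, and the factor arrangement keeps each factor at least $2^{-603}$. I would also need each nonzero product and each partial sum to be normal. Partial sums are at least their first nonzero term and at most the final value, so they are normal once the products are. For the products, a nonzero product of scaled factors is a nonnegative contribution to a stored value, and it is at least $2^{-603}$ times a factor of size at least $2^{-603}$, so I must argue it is bounded below by roughly the scaled value of a single path. I would bound it from below by $4^{-\lambda}\geq2^{-600}$ times a positive integer, adjusted by the scaling, which keeps it above the subnormal threshold $2^{-1022}$.

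The final claim about zeros then follows. A relative error factor $1+\theta$ with $|\theta|<1$ cannot turn a nonzero value into zero, since $\nu\lambda u<1$ for $\lambda\leq300$. Zero summands are computed as exact zeros, so a stored value is zero exactly when its exact value is.
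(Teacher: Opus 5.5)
Your proposal is correct and follows essentially the same route as the paper's proof. Both argue by induction on $\lambda$ in computation order and treat power-of-two scalings as exact, allowing at most one rounding per product plus at most $\nu-1$ additions. The inherited $\gamma_{\nu(\lambda-1)}$ is then combined with $\gamma_\nu$, and nonnegativity of the summands passes the common bound to the sum.

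One small correction to your range argument: a nonzero exact scaled summand is a positive integer times $4^{-w}$, with $w\leq N$ the reduced grade (or $p+q$), not $4^{-\lambda}$. The paper then uses the induction hypothesis as you intend, to place the computed product within a factor $2$ of the exact summand and hence above $2^{-601}$.
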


\begin{proof}
We argue by induction on $\lambda$, in the order in which the values are
computed.  A nonzero exact scaled value is a positive integer times
$2^{-2w}$, where $w\leq N$ is the reduced grade or $p+q$, so it is at least $2^{-600}$, and so is every
nonzero summand.  Since the summands are nonnegative and rounding is
monotone, every running sum lies between its first nonzero summand and the
stored value.  By the induction hypothesis, a computed product of stored
factors is the exact value of its summand times a number between $1/2$ and
$2$, so it is at least $2^{-601}$.  The build for $N=300$
checks the range of the stored values given in
\cref{app:float-implementation}, and with it and the form of the products
there, every nonzero number formed lies between $2^{-603}$ and $2^{997}$, so
no underflow or overflow occurs.

A summand that is one stored value times a power of two is formed without
rounding.  A product of two stored values is formed with one rounding, or
with none if it is fused with the next addition.  By the induction hypothesis
and since the letters of the factors add up to at most $\lambda-1$, the
exact product of the stored factors of a summand, with its power of two, is
the exact value of the summand times $1+\theta$ with
$|\theta|\leq\gamma_{\nu(\lambda-1)}$.  The summand then
undergoes at most $\nu$ roundings: at most one in the product, at most
one in each addition to the running sum of its group, and at most one in
each addition of a group sum.  Therefore the computed value is
$\sum_i z_i(1+\theta_i)$, where the $z_i\geq0$ are the exact summands and
$|\theta_i|\leq\gamma_{\nu\lambda}$.  This is
$(1+\theta)\sum_i z_i$ with $|\theta|\leq\gamma_{\nu\lambda}$.
\end{proof}

\begin{proposition}[Floating-point sampling]\label{prop:float-sampling}
Let $n\leq300$ and $\bar\gamma=\gamma_{\nu N}$.  Assume that the random
bits used by the sampler are independent and uniform.  Then the output of
the floating-point sampler is within total variation distance
\[
 n\Bigl(\frac{\bar\gamma}{1-\bar\gamma}+7\nu u\Bigr)<3.5\cdot10^{-5}
\]
of the uniform distribution on $\Av_n(12453)$.
\end{proposition}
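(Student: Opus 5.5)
We compare the floating-point sampler with the exact sampler of \cref{prop:sampling} one letter choice at a time, and then sum over the $n$ choices. The exact sampler makes a sequence of choices. At a state with exact value $T$ it picks candidate $i$ with probability $z_i/T$, where the $z_i$ are the exact summands, which sum to $T$. Its output is uniform. For total variation distance, it is enough to bound, at each state, the distance between the exact conditional distribution of the next candidate and the floating-point one. This uses the standard hybrid (coupling) argument: if the two chains have conditional laws within $\varepsilon$ at every step and every state, then the laws of the $n$-step paths are within $n\varepsilon$. Because the complete path determines the permutation, the same bound holds for the output. We therefore want a per-choice bound $\varepsilon\le\bar\gamma/(1-\bar\gamma)+7\nu u$.

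Fix a state. By \cref{lem:float-tables} every stored factor has relative error at most $\gamma_{\nu\lambda}\le\bar\gamma$, and so does $\hat T$. The intervals $[A_{g-1},A_g)$ and the interval of each member are computed quantities. We compare the computed partition of $[0,\hat T)$ with the ideal partition of $[0,T)$ into intervals of lengths $z_i$. We split the error into two parts.

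\emph{Table error.} Rescaling $\hat T$ to $T$ and each computed summand to its exact value changes each interval length by a relative factor $1+\theta$ with $|\theta|\le\bar\gamma$. Normalising the probabilities $\hat z_i/\sum\hat z_j$ then gives total variation at most $\bar\gamma/(1-\bar\gamma)$. The reason is that each ratio $(1+\theta_i)/(1+\theta)$ lies within $[1-\bar\gamma,1+\bar\gamma]/(1\mp\bar\gamma)$, and summing $|p_i-\hat p_i|$ gives this bound.

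\emph{Rounding error in the selection.} There are up to $\nu-1$ candidates, and their running group sums and within-group sums are each accumulated with at most $\nu$ roundings. There are further roundings in $y=\mathrm{fl}(x\hat T)$, in the difference $y-A_{g-1}$, and in the discretisation of $x$ to multiples of $2^{-53}$. We bound the displacement of every breakpoint: each computed partial sum differs from the exact partial sum of computed summands by at most $\gamma_{\nu}$ times that partial sum, hence by $O(\nu u)\hat T$. A breakpoint shift of $\eta\hat T$ moves probability $\eta$ between adjacent candidates. Summed over the two ends of each interval, these shifts together with the $y$-rounding and the discretisation give $\le 7\nu u$. The constant $7$ comes from a few sources of roughly $\nu u$ each, counted at both endpoints. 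The fallback rescan, used when $y\ge A_{\text{last}}$, only affects an event of probability already absorbed in this breakpoint shift. Zero candidates are never chosen, and they are exactly the exact zeros by \cref{lem:float-tables}, so support is preserved.

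The main obstacle is this second part. We must make the $7\nu u$ accounting rigorous: the nested group and member scans must be treated as shifts of monotone breakpoints, and the rescan logic must be shown to cost nothing extra. A naive bound that charges $\gamma_\nu$ per candidate would add an extra factor $\nu$. The key is that the displacements are of cumulative sums, each bounded relative to $\hat T$, and not of individual terms. Finally, we evaluate $\bar\gamma=\gamma_{\nu\cdot300}$ with $\nu=3\,352\,801$ and $u=2^{-53}$. This gives $\bar\gamma\approx1.12\cdot10^{-7}$ and $7\nu u\approx2.6\cdot10^{-9}$, so $300(\bar\gamma/(1-\bar\gamma)+7\nu u)<3.5\cdot10^{-5}$.
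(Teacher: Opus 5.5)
Your overall frame matches the paper's: a coupling over the $n$ letter choices, and at each state a split into a table error $\bar\gamma/(1-\bar\gamma)$, from the ratios $(1+\theta_i)/(1+\theta)$, plus a selection error $7\nu u$. Your final arithmetic also agrees.

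\textbf{The selection estimate.} This is the part you leave unproved, and the mechanism you propose for it fails. The probability of candidate $i$ is governed by the length $\xi_i-\xi_{i-1}$ of its interval. So if $\Delta_i$ is the displacement of the $i$th breakpoint from the corresponding exact partial sum, the error to control is $\sum_i|\Delta_i-\Delta_{i-1}|/\hat T$. A bound $|\Delta_i|\le\gamma_\nu\hat T$ on each cumulative sum, charged at both ends of all $k$ intervals, gives only $O(k\gamma_\nu)$. Since $k$ is close to $\nu$ at the largest empty-stack states, this is $O(\nu^2u)$, about $10^{-3}$ per choice. That is exactly the extra factor $\nu$ you wanted to avoid.

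What keeps the bound linear in $\nu$ is an estimate on increments, not on cumulative sums:
\begin{itemize}
\item forming $\alpha_j=\mathrm{fl}(\alpha_{j-1}+\hat w_i)$ perturbs the increment by a single rounding of size at most $u\hat A$;
\item likewise $A_g-A_{g-1}$ differs from $\alpha_r$ by at most $u\hat A$;
\item rounding $y-A_{g-1}$ moves each breakpoint by at most $1.01u\hat A$.
\end{itemize}
Hence $|\xi_i-\xi_{i-1}-\hat w_i|\le4.02u\hat A$ for every candidate. Adding $|F(\tau)-f(\tau)|\le2.01u$ for the draw at each breakpoint, each candidate costs $O(u)$, and $k<\nu$ gives $O(\nu u)$.

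\textbf{Comparing $\hat T$ with $\hat A$.} You also need a direct comparison of $\hat T$ with the total $\hat A$ of the scan. Your remark that the rescan is ``absorbed in the breakpoint shift'' silently assumes it. In your table step, $\hat T$ is a rescaling of $T$ with its own error $\theta$. From that alone, $\hat T$ and $\sum_i\hat w_i\approx\hat A$ can differ by about $2\bar\gamma$ relatively. Then the rescan probability, or the mass of candidates lying above $\hat T$, is bounded only by $O(\bar\gamma)$. The per-choice bound you could conclude is about $3\bar\gamma$, and $300\cdot3\bar\gamma\approx10^{-4}$ exceeds $3.5\cdot10^{-5}$.

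The paper closes this by noting that $\hat T$ was itself computed, when the table was built, as a floating-point sum of the same exact products $v_i$ of the same stored values. The two endpoint candidates form one table summand $v_i+v_{i+1}$, and each $v_i$ undergoes at most $\nu$ roundings in either sum. Hence $|\hat A-\hat T|\le2.01\nu u\hat T$, and the table error enters only through the ratios $\hat w_i/\hat T$.

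\textbf{Product candidates.} A smaller point of the same kind: for a candidate that is a product of two stored values, knowing that each factor is within $\bar\gamma$ gives only about $2\bar\gamma$, which would also break the constant. You need that the letter counts of the two factors add up to at most $\lambda-1$. Then the computed product has relative error at most $\gamma_{\nu(\lambda-1)+1}\le\bar\gamma$, as in the proof of \cref{lem:float-tables}.
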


\begin{proof}
By \cref{lem:float-tables} the sampler reaches only states of positive
count.  Fix such a state, with stored value $\hat T$ and candidates
$1,\ldots,k$ in scan order.  Let $w_i$ be the scaled exact value of
candidate $i$ and $W=w_1+\cdots+w_k$.  Let $v_i$ be the exact product of the
stored values in candidate $i$ and its power of two, and let
$\hat w_i=\mathrm{fl}(v_i)$ be the value the sampler computes, with
$\hat w_i=v_i$ when there is one factor.

\emph{The values.}  By \cref{lem:float-tables} applied to the factors,
$\hat w_i=w_i(1+\theta_i)$ with
$|\theta_i|\leq\gamma_{\nu(\lambda-1)+1}\leq\bar\gamma$, and
$\hat T=W(1+\theta)$ with $|\theta|\leq\bar\gamma$.  Therefore
\begin{equation}\label{eq:float-values}
 \sum_{i=1}^k\Bigl|\frac{\hat w_i}{\hat T}-\frac{w_i}{W}\Bigr|
 =\sum_{i=1}^k\frac{w_i}{W}\Bigl|\frac{1+\theta_i}{1+\theta}-1\Bigr|
 \leq\frac{2\bar\gamma}{1-\bar\gamma}.
\end{equation}
The stored value $\hat T$ and the total $\hat A$ of the scan are both
computed sums of $v_1,\ldots,v_k$ in which each $v_i$ undergoes at most
$\nu$ roundings.  The two endpoint candidates are one summand of the
table, of value $v_i+v_{i+1}$.  Therefore $\hat T$ and $\hat A$ are both
within a factor $1\pm\gamma_\nu$ of $v_1+\cdots+v_k$, and
$|\hat A-\hat T|\leq2.01\nu u\hat T$.

\emph{The scan.}  For $y<\hat A$ the chosen candidate is a nondecreasing
function of $y$, since rounding is monotone.  So candidate $i$ is chosen for
$y$ in an interval $[\xi_{i-1},\xi_i)$, where
$0=\xi_0\leq\cdots\leq\xi_k=\hat A$.  Consider a group with running sums
$\alpha_1\leq\cdots\leq\alpha_r$ of its members, and let $A_{g-1}$ and $A_g$
be the running sums before and after it.  The group occupies
$[A_{g-1},A_g)$, and its member $j<r$ ends where
$\mathrm{fl}(y-A_{g-1})$ reaches $\alpha_j$, or at $A_g$ if that comes first.
Since $|\mathrm{fl}(z)-z|\leq u|z|$, this breakpoint differs from
$A_{g-1}+\alpha_j$ by at most $1.01u\hat A$.  The difference
$\alpha_j-\alpha_{j-1}$ differs from the value of member $j$ by at most
$u\hat A$, and $A_g-A_{g-1}$ differs from $\alpha_r$ by at most $u\hat A$.
Therefore
\begin{equation}\label{eq:float-scan}
 |\xi_i-\xi_{i-1}-\hat w_i|\leq4.02u\hat A\qquad(1\leq i\leq k).
\end{equation}

\emph{The draw.}  For $\tau\geq0$ let $F(\tau)$ be the probability that
$y\geq\tau$, and let $f(\tau)=\max(0,1-\tau/\hat T)$.  Since
$x\hat T<\hat T$, we have $y\leq\hat T$, so $F(\tau)=0=f(\tau)$ for
$\tau>\hat T$.  For $\tau\leq\hat T$, we have $y\geq\tau$ if
$x\hat T\geq\tau/(1-u)$ and only if $x\hat T\geq\tau/(1+u)$.  Since $x$ is
uniform on the multiples of $2^{-53}$ in $[0,1)$, the probability that
$x\geq z$ lies between $1-z-u$ and $1-z$.  Together,
\begin{equation}\label{eq:float-draw}
 |F(\tau)-f(\tau)|\leq2.01u\qquad(0\leq\tau\leq\hat A).
\end{equation}

\emph{One letter choice.}  Let $\hat p_i$ be the probability that the
sampler chooses candidate $i$.  Without a rescan, candidate $i$ is chosen
with probability $F(\xi_{i-1})-F(\xi_i)$.  By \eqref{eq:float-draw},
replacing $F$ by $f$ changes the sum over $i$ of these probabilities by at
most $4.02ku$ in total.  The differences $f(\xi_{i-1})-f(\xi_i)$ equal
$(\xi_i-\xi_{i-1})/\hat T$ except above $\hat T$, which changes the sum
by at most $\max(\hat A-\hat T,0)/\hat T\leq2.01\nu u$.  By
\eqref{eq:float-scan}, replacing $\xi_i-\xi_{i-1}$ by $\hat w_i$ changes
it by at most $4.03ku$.  A rescan happens with probability
$F(\hat A)\leq f(\hat A)+2.01u\leq2.01\nu u+2.01u$.  Since $k<\nu$,
\[
 \sum_{i=1}^k\Bigl|\hat p_i-\frac{\hat w_i}{\hat T}\Bigr|
 \leq8.05ku+4.02\nu u+2.01u\leq14\nu u.
\]
With \eqref{eq:float-values}, the total variation distance between the
choice of the sampler and the choice with probabilities $w_i/W$ is at most
$\bar\gamma/(1-\bar\gamma)+7\nu u$.

\emph{The sample.}  With probabilities $w_i/W$, the choices produce the
uniform distribution on $\Av_n(12453)$, by \cref{prop:sampling}.  Run the
two processes together, and as long as they are in the same state, couple
their next choices so that they differ with probability equal to the total
variation distance between them.  A sample is produced by $n$ choices, so
the two outputs differ with probability at most
$n(\bar\gamma/(1-\bar\gamma)+7\nu u)$.  At $n=300$ this is
$3.43\cdot10^{-5}$.
\end{proof}

The bound of \cref{lem:float-tables} is far from the observed error: at
$n=150$ it gives about $6\cdot10^{-8}$ for $G_{(n,0)}$, while the
floating-point values $G_{(n,0)}$ agree with the exact coefficients
through $n=150$ to within $3.2\cdot10^{-13}$ relative error.

\section{The Lean development}\label{app:lean}

This appendix describes the Lean development of \cref{sec:formal}.

\subsection{Conventions}

The development works with words over $\mathbb N$ and $0$-based values.  It
defines pattern containment as the existence of an order-isomorphic
subsequence, a finite set of $0$-based words in bijection with $\Av_n(\tau)$,
the one-threshold recurrence $W$ of
\eqref{eq:W} and \eqref{eq:W-boundary}, the literal recurrence $H$ of
\eqref{eq:H} with $H_{\mathbf0}(\varnothing)=1$ for $d=2$, and the
kernel recurrences \eqref{eq:scalar-K}--\eqref{eq:scalar-G} for $d=1$ and
\eqref{eq:K}--\eqref{eq:G} for $d=2$.  In the two kernel systems every sum
over intermediate or terminal controls is restricted to the controls of mass
at most that of the source control of its kernel row (for $d=1$, to
$u\leq p$ in \eqref{eq:scalar-K} and to $t\leq h$ in \eqref{eq:scalar-G}),
which \cref{lem:scalar-support,lem:support} show loses nothing.
In \path{Challenge.lean} the four main theorems of \cref{sec:formal} are
named \path{Av12453.count_1342_literal},
\path{Av12453.count_1342_kernel}, \path{Av12453.count_12453_literal},
and \path{Av12453.count_12453_kernel}.  They restate for Mathlib's
permutations the development's theorems \texttt{av1342\_count},
\texttt{av1342\_count\_kernel}, \texttt{av12453\_count}, and
\texttt{av12453\_count\_kernel}, which are stated for words.

\subsection{Organization}

The Lean project \path{formal/Av12453/} builds two libraries from one
source tree.  The library \path{PermPatterns/} holds the generic apparatus
(words, containment and avoidance, standardization, the patterns $231$ and
$\iota_d$, sums, symmetries, and the passage to Mathlib's permutations of
$\{0,\ldots,n-1\}$), and nothing in it refers to the family $\beta_d$.  The
library \path{Av12453/} holds the development specific to this paper.  The
file \path{Av12453/Axioms.lean}, which neither library imports, checks that
every declaration of the two libraries depends only on propositional extensionality, choice, and quotient
soundness.  The file \path{formal/README.md} describes the modules, the
checks run by \path{check.sh}, and the trusted base.  The
Lean toolchain (version 4.35.0-rc2) and the Mathlib revision are pinned in
the project files.

\subsection{What must be read}

The file \path{Challenge.lean} states the four theorems for Mathlib's
permutations of $\{0,\ldots,n-1\}$, with pattern containment
\texttt{PermContains} and the patterns written as products of transpositions,
together with the definitions of \texttt{W}, \texttt{H}, \texttt{K}, and
\texttt{G}, and \path{comparator.sh} checks that these definitions are identical
to those of the development.  The recurrences are written with a recursion
bound.  The definitions
\texttt{W}, \texttt{H}, \texttt{K}, and \texttt{G} evaluate an auxiliary
function at a bound exceeding, respectively, the number of unread values,
the number of unread values, the grade, and the control mass of the
argument, and the development
proves that every sufficient bound gives the same value.  So the reader need
only compare
the equations of the auxiliary functions with the displays \eqref{eq:W} and
\eqref{eq:W-boundary}, \eqref{eq:H} with $H_{\mathbf0}(\varnothing)=1$,
\eqref{eq:scalar-K}--\eqref{eq:scalar-G}, and \eqref{eq:K}--\eqref{eq:G}.
\path{Challenge.lean} writes the terms with $a=0$ or $b=0$ of the split sums
of these displays as one endpoint term, with a case distinction on whether
the head has size one, which is the same sum.  The development's restated
kernel equations call this term \texttt{D}, and those of \texttt{W} and
\texttt{H} call it \texttt{Eend}.  The opening docstring of
\path{Challenge.lean} gives this dictionary, with the $0$-based conventions
spelled out, and the displays are also restated as proved theorems
(\texttt{K\_succ\_eq}, \texttt{G\_eq}, and their analogues), which serve as
reading aids but are not part of what must be trusted.

\subsection{Proofs and checks}

The thresholds $b_1,b_2$ are defined as the least final values of
increasing subsequences, and the interval stack by the moves of
\cref{prop:state-invariant}(b), which classify a letter by the band or
interval containing it, so \cref{lem:separators} and
\cref{prop:scan-states}(a) and (b) are proved for this notion of legal
prefix.  \Cref{lem:trigger,lem:first-letter} are formalized, but the four
main theorems do not use them.  Since the kernels are defined by their
recurrences, \cref{prop:kernel-recurrence} is not needed.  The restriction of the
control sums is proved lossless, and it is also what makes recursion on
the grade terminate for \eqref{eq:scalar-K} and \eqref{eq:K} read as
definitions.

The development has over eleven thousand lines in thirty modules.  The
definitions were also
exercised by kernel evaluation: the scans of the running example
\eqref{eq:running-example} reproduce the rows of
\cref{ex:1342-scan,ex:full-state}, kernel evaluation of the recurrences
reproduces the first seven terms of \eqref{eq:first-terms}, and compiled
evaluation the first ten.

\bibliographystyle{amsplain}
\bibliography{av12453_polytime}

\end{document}